\documentclass[12pt,reqno]{amsart}

\setlength{\textheight}{23cm}
\setlength{\textwidth}{16.5cm}
\setlength{\topmargin}{-0.8cm}
\setlength{\parskip}{0.3\baselineskip}
\hoffset=-1.9cm

\usepackage[utf8]{inputenc}
\usepackage{amsmath}
\usepackage{amsfonts}
\usepackage{xypic}
\usepackage{amsthm}
\usepackage{mathrsfs}
\usepackage{amssymb}
\usepackage[all]{xy}
\usepackage[T1]{fontenc}
\usepackage{graphicx}
\usepackage{multirow}
\usepackage{hyperref}

\numberwithin{equation}{section}

\newtheorem{theorem}{Theorem}[section]
\newtheorem{lemma}[theorem]{Lemma}
\newtheorem{proposition}[theorem]{Proposition}
\newtheorem{corollary}[theorem]{Corollary}

\theoremstyle{remark}
\newtheorem{remark}[theorem]{Remark}

\newcommand{\conj}{\mathrm{conj}}

\newcommand{\Pic}{\mathrm{Pic}}

\newcommand{\Hom}{\mathrm{Hom}}
\newcommand{\Aut}{\mathrm{Aut}}
\newcommand{\Int}{\mathrm{Int}}
\newcommand{\Conj}{\mathrm{Conj}}
\newcommand{\Out}{\mathrm{Out}}

\newcommand{\ad}{\mathrm{ad}}
\newcommand{\id}{\mathrm{Id}}
\newcommand{\GL}{\mathrm{GL}}
\newcommand{\Sing}{\mathrm{Sing}}
\newcommand{\U}{\mathrm{U}}
\newcommand{\ord}{\mathrm{ord}}
\newcommand{\aj}{\mathrm{aj}}

\newcommand{\smooth}{sm}
\newcommand{\Sym}{Sym}

\renewcommand{\i}{\mathbf{i}}

\newcommand{\Ff}{\mathcal{F}}
\newcommand{\Mm}{\mathcal{M}}
\newcommand{\Oo}{\mathcal{O}}
\newcommand{\Rr}{\mathcal{R}}

\newcommand{\PP}{\mathbb{P}}

\newcommand{\CC}{\mathbb{C}}
\newcommand{\HH}{\mathbb{H}}
\newcommand{\RR}{\mathbb{R}}
\newcommand{\ZZ}{\mathbb{Z}}

\newcommand{\gG}{\mathfrak{g}}
\newcommand{\tT}{\mathfrak{t}}
\newcommand{\zZ}{\mathfrak{z}}

\DeclareMathOperator{\SL}{SL}

\begin{document}

\title[Involutions of the moduli of Higgs bundles over elliptic curves]{Involutions 
of the moduli spaces of $G$-Higgs bundles over elliptic curves}

\author[I. Biswas]{Indranil Biswas}

\address{Indranil Biswas \\ School of Mathematics, Tata Institute of Fundamental
Research, \\ Homi Bhabha Road, Mumbai 400005, India}

\email{indranil@math.tifr.res.in}

\author[L. A. Calvo]{Luis Angel Calvo}

\address{Luis Angel Calvo \\ ICMAT (Instituto de Ciencias Matem\'aticas) \\ C/ Nicol\'as
Cabrera, no. 13--15, Campus Cantoblanco, 28049 Madrid, Spain}

\email{luis.calvo@icmat.es}

\author[E. Franco]{Emilio Franco}

\address{Emilio Franco \\ IMECC (Instituto de Matem\'atica, Estat\'istica e Computa\c{c}\~ao Cien\-t\'i\-fi\-ca), Universidade Estadual de Campinas \\ 
Rua S\'ergio Buarque de Holanda 651, Cidade Universit\'aria "Zeferino Vaz", Campinas (SP, Brazil)}

\email{emilio\_franco@ime.unicamp.br}

\author[O. Garc\'{\i}a-Prada]{Oscar Garc\'{\i}a-Prada}

\address{Oscar Garc\'ia-Prada \\ ICMAT (Instituto de Ciencias Matem\'aticas) \\ C/ Nicol\'as
Cabrera, no. 13--15, Campus Cantoblanco, 28049 Madrid, Spain}

\email{oscar.garcia-prada@icmat.es}

\keywords{Higgs bundles, elliptic curves, real vector bundles, involution.}

\subjclass[2010]{14H60, 14D20, 14H52} 

\thanks{
This work is supported by the European Commission Marie Curie IRSES  MODULI Programme PIRSES-GA-2013-612534. 
The first author is supported by a J. C. Bose Fellowship. The third 
author is supported by Funda\c{c}\~{a}o de Amparo \`a Pesquisa do Estado 
de S\~{a}o Paulo through projects FAPESP-2012/16356-6 and BEPE-2015/06696-2. 
The fourth author was partially supported by the Ministerio de Econom\'{\i}a y 
Competitividad of Spain through Project MTM2013-43963-P and Severo Ochoa 
Excellence Grant. 
  }

\begin{abstract}
We present a systematic study of involutions on the moduli space of $G$-Higgs bundles 
over an elliptic curve $X$, where $G$ is complex reductive
affine algebraic group. The 
fixed point loci in the moduli space of $G$-Higgs bundles on $X$, and in the moduli space of 
representations of the fundamental group of $X$ into $G$, are described. This leads to an
explicit description of the moduli spaces of pseudo-real $G$-Higgs bundles over $X$.
\end{abstract}

\maketitle

\tableofcontents

\section{Introduction}

Given a complex reductive affine algebraic group $G$ and a compact Riemann surface $X$, a
{\it $G$-Higgs bundle} over $X$ is a pair $(E,\,\varphi)$, where $E$ is a holomorphic 
principal $G$-bundle over $X$, and $\varphi$ is a holomorphic section of 
$E(\mathfrak{g})\otimes K$, the adjoint vector bundle $E(\mathfrak{g})$
of $E$ twisted by the canonical bundle 
$K$ of $X$. These objects were introduced by Hitchin \cite{hitchin-selfuality_equations},
while Simpson \cite{simpson0, simpson1, simpson2}, and Nitsure \cite{Ni} constructed the moduli space
$\Mm(G)$ of $G$-Higgs bundles.

Let $X$ be a complex elliptic curve.
Let $\sigma_{+}$ and $\alpha_{+}$ denote holomorphic involutions on $G$ and $X$, 
respectively. Consider the involution (see Section \ref{sc brane involutions} for 
details)
\[
I(\alpha_{+},\sigma_{+},\pm)\, :\, \Mm(G) \,\longrightarrow\,
\Mm(G)\, , \ \
(E,\,\varphi)\, \longmapsto\,
( \alpha_{+}^*\sigma_{+}(E),\, \pm \alpha_{+}^*\sigma_{+}(\varphi)) \, .
\]
Analogously, for anti-holomorphic involutions $\sigma_{-}$ and $\alpha_{-}$, on $G$ and $X$, define
\[
I(\alpha_{-},\sigma_{-},\pm)\, :\, 
\Mm(G) \,\longrightarrow\,
\Mm(G) \, , \ \ (E,\,\varphi) \, \longmapsto\,
( \alpha_{-}^*\sigma_{-}(E), \,\pm\alpha_{-}^*\sigma_{-}(\varphi))\, .
\]
The first goal of this paper is to classify these involutions and describe their
fixed points. 

Let $Z$ denote the center of $G$ and $Z_{2}^{\sigma_{-}}\, \subset\, Z^{\sigma_{-}}$
the group elements of order two of $Z$ fixed pointwise by $\sigma_{-}$.
Given any $z \,\in\, Z_{2}^{\sigma_{-}}$, we 
consider the pseudo-real $(G,\alpha_{-},\sigma_{-},\pm,z)$-Higgs bundles \cite{BGP, BGH} 
which are $G$-Higgs bundles equipped with a certain real structure (see Section 
\ref{sc moduli spaces of fixed points}). The second goal of this paper is to obtain a 
description of the moduli space $\Mm(G,\alpha_{-},\sigma_{-},\pm,z)$ of pseudo-real 
$G$-Higgs bundles; this is achieved in Section \ref{sc 
moduli of pseudo-real}.

\

A major result of the theory of Higgs bundles is the non-abelian Hodge 
correspondence, proved by Hitchin \cite{hitchin-selfuality_equations} and Do\-naldson 
\cite{donaldson} for $\SL(2,\CC)$, and by Simpson \cite{simpson1, simpson2} and 
Corlette \cite{corlette} for an arbitrary group (see \cite{BG} also for the general case). 
It relates the moduli of $G$-Higgs bundles with the moduli space of representations of 
the fundamental  group
\begin{equation}\label{rg}
\Rr(G) \,:= \,\Hom(\pi_1(X,x_{0}),\, G) /\!\!/ G
\end{equation}
as follows:

\begin{theorem}\label{ths}
There is a natural homeomorphism $\Mm(G)\,\cong\, \Rr(G)\, .$
\end{theorem}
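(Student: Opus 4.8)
The statement is the non-abelian Hodge correspondence, so the plan is to assemble it from its three classical constituents rather than to reprove any of them from scratch. First I would recall the correspondence between (polystable, degree-zero) $G$-Higgs bundles and $G$-Higgs bundles admitting a solution to the Hitchin equations: given a $C^\infty$ reduction $h$ of the principal $G$-bundle $E$ to a maximal compact subgroup, the Hitchin equation $F_h + [\varphi,\varphi^{*_h}] = 0$ (where $F_h$ is the curvature of the Chern connection of $h$) has a solution precisely when $(E,\varphi)$ is polystable. This is the Hitchin--Kobayashi correspondence for Higgs bundles; for $\SL(2,\CC)$ it is due to Hitchin \cite{hitchin-selfuality_equations}, and in general to Simpson \cite{simpson1, simpson2} (see also \cite{BG}). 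Second, I would recall the theorem of Corlette \cite{corlette} and Donaldson \cite{donaldson}: a reductive representation $\rho\colon\pi_1(X,x_0)\to G$ admits, on the associated flat $G$-bundle, a harmonic metric (equivalently, a harmonic reduction to a maximal compact), and conversely a harmonic metric on a flat bundle makes its Higgs field harmonic.

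The bridge between the two is the observation that, on a compact Kähler manifold, a solution of the Hitchin equations is exactly the data that identifies the Chern connection plus Higgs field, namely $D = \nabla_h + \varphi + \varphi^{*_h}$, as a flat $G$-connection, and conversely that the harmonic reduction of a flat connection splits $D$ into such a sum. Concretely: starting from a polystable $(E,\varphi)$ of degree zero, solve the Hitchin equations to get $h$, form $D = \nabla_h + \varphi + \varphi^{*_h}$, check $D$ is flat (this uses the Kähler identities and the vanishing $F_h+[\varphi,\varphi^{*_h}]=0$ together with $\bar\partial_E\varphi = 0$), and take its monodromy to land in $\Rr(G)$. Going back, a reductive $\rho$ gives a flat bundle with harmonic metric by Corlette--Donaldson; the $(1,0)$-part of the associated Chern connection defines a holomorphic structure $\bar\partial_E$, and the $(1,0)$-part of the Higgs term defines $\varphi$, with holomorphicity $\bar\partial_E\varphi = 0$ and polystability both following from harmonicity. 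These two constructions are mutually inverse at the level of gauge-equivalence classes, and one checks they descend to the GIT/analytic quotients defining $\Mm(G)$ and $\Rr(G)$.

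For the topology: the identification is continuous in each direction because the solutions to the Hitchin equations and the harmonic metrics depend continuously on the initial data (by the openness and uniqueness parts of the respective existence theorems), and both moduli spaces carry the quotient topology from the relevant spaces of connections; hence the bijection is a homeomorphism. The main obstacle — or rather the point where all the analytic weight sits — is the existence half of each of the two hard theorems (Hitchin--Kobayashi for Higgs bundles, and Corlette--Donaldson harmonicity), together with the regularity needed to know the resulting objects are genuinely holomorphic/flat; since the excerpt permits citing these, the remaining work is the essentially formal verification that the two constructions are inverse and that the correspondence is equivariant for the $G$-actions so that it passes to the quotients. I would also remark that on an elliptic curve $K\cong\Oo_X$, so $\varphi$ is simply a holomorphic section of $E(\gG)$; this simplification is not needed for the theorem but will be used throughout the rest of the paper.
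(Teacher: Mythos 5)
Your outline is correct and is essentially the paper's own treatment: the paper states Theorem \ref{ths} without proof, attributing it to Hitchin, Donaldson, Simpson and Corlette (with \cite{BG} for the general principal-bundle case), and your assembly of the correspondence from the Hitchin--Kobayashi theorem for Higgs bundles, the Corlette--Donaldson harmonic metric theorem, and the flatness/splitting of $D=\nabla_h+\varphi+\varphi^{*_h}$ is exactly the content of those citations.
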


To treat simultaneously the holomorphic and anti-holomorphic cases, we will write
$\sigma_{\epsilon}$ and $\alpha_\epsilon$ (instead of
$\sigma_+ ,\, \sigma_-$ and $\alpha_+ ,\, \alpha_-$).
Along with the study of the involutions $I(\alpha_{\epsilon}, \sigma_{\epsilon}, \pm)$, we also describe in this article their images $J(\alpha_{\epsilon}, \sigma_{\epsilon}, \pm)$ under the homeomorphism of Theorem \ref{ths}, which are involutions on $\Rr(G)$, making the following diagram commutative,
\[
\xymatrix{
\Mm(G) \ar[rr]^{\rm homeo.} \ar[d]_{I(\alpha_{\epsilon}, \sigma_{\epsilon}, \pm)} & & \Rr(G) \ar[d]^{J(\alpha_{\epsilon}, \sigma_{\epsilon}, \pm)}
\\
\Mm(G) \ar[rr]^{\rm homeo.}  & & \Rr(G).
}
\]

The complex structure $\Gamma_1$ on $\Mm(G)$ comes from $X$, while the complex structure $\Gamma_2$ on $\Rr(G)$ is given by that of the group. In view of Theorem \ref{ths}
one can identify the spaces $\Mm(G)$ and $\Rr(G)$. As we have seen,
there are two different complex structures on it and, in fact, on can endow
the smooth locus $\Mm(G)_{sm}$ with a hyper-K\"ahler structure $(\omega_1, \omega_2,
\omega_3)$, where the complex structure $\Gamma_3$ is $\Gamma_1\Gamma_2$. Note also that
one can define three corresponding (holomorphic) symplectic
structures $\Omega_1, \, \Omega_2$ and $\Omega_3$.

In the context of Mirror Symmetry, an $A$-brane is a Lagrangian submanifold together 
with a flat bundle supported on it, while a $B$-brane is a complex submanifold with 
a holomorphic bundle, although in this paper we will use the terms $A$-brane and 
$B$-brane to refer simply to their support, i.e., the special Lagrangian or complex 
submanifold. In their ground-breaking paper \cite{kapustin&witten}, Kapustin and 
Witten introduced the concept of $(*, *, *)$-brane as a submanifold which is either 
complex or special Lagrangian for each of the complex structures $\Gamma_1$, 
$\Gamma_2$ and $\Gamma_3$. This is a hyper-K\"ahler submanifold in the case of a 
$(B,B,B)$-brane, or a $\Omega_i$-complex Lagrangian submanifold in the case of 
$(B,A,A)$, $(A,B,A)$ or $(A,A,B)$-branes.

In \cite{BSC}, Baraglia and Schaposnik identified the fixed points of 
$I(\alpha_{-}, \sigma_{-}, -)$ with an $(A,B,A)$-brane (see also \cite{BGP}). 
The involutions $I(\text{Id}_X, \sigma_{+}, 
+)$ have been studied by Hitchin 
\cite{hitchin-selfuality_equations,hitchin_cc} and in
 \cite{GP1, GP2,garcia-prada&ramanan}. 
It turns out that 
the fixed points of $I(\text{Id}_X, \sigma_{+}, +)$ constitute a $(B,B,B)$-brane, while the 
fixed points of $I(\text{Id}_X, \sigma_{+}, -)$ give a $(B,A,A)$-brane. See 
\cite{garcia-prada&ramanan} for a detailed study of these involutions and their 
fixed points. For a general picture of anti-holomorphic involutions on the 
smooth
locus $\Mm(G)_{\smooth}\, \subset\,\Mm(G)$ we refer to 
\cite{baraglia&schaposnik_2, BGP}. The involution  
$I(\alpha_+, \text{Id}_X, +)$ is a particular case of the more general situation
studied in \cite{garcia-prada&wilkin} (see also \cite{heller&schaposnik}, which contains a detailed study of the case of $\alpha_+$ being fix point free). 

Over an elliptic curve one can achieve a greater level of explicitness
regarding the des\-cription of the moduli spaces of Higgs bundles. In 1957 Atiyah
\cite{atiyah} described the moduli space of rank $n$ vector bundles on an elliptic
curve $X$
as the symmetric product of the curve $M(\GL(n,\CC)) \,\cong\, \Sym^n(X)$ and some 30 years later, Laszlo \cite{laszlo} and Friedman, Morgan and Witten \cite{friedman&morgan, friedman&morgan&witten}, gave a description of the moduli space of principal $G$--bundles for a complex reductive Lie group $G$, in terms of the finite quotient $M(G) \cong (X \otimes_\ZZ \Lambda_T) / W$, where $T$ is a Cartan subgroup of $G$, $\Lambda_T$ is the cocharacter lattice and $W$ the associated Weyl group. In \cite{FGN} the third and fourth authors studied the case of $G$-Higgs bundles, showing
\[
\Mm(G) \,\cong\, (T^*X \otimes_\ZZ \Lambda_T) / W.
\]
One can find in \cite{FGN2} a detailed study of the involution $I(\text{Id}_X, \sigma_{+}, -)$ on the moduli spaces of Higgs bundles for real forms of $G$, and in
\cite{BS}, a description of the fixed points of this involution
$I(\alpha_{-}, \sigma_{-}, \pm)$ of the Atiyah's moduli space $M(\GL(n,\CC))$ and its relation with the moduli spaces of (real) vector bundles over a real elliptic curve (see also \cite{BB} for the case of a Klein bottle).

\

This article is organized as follows. After a quick review of involutions on complex Lie groups in Section \ref{sc involutions}, we define in Section \ref{sc brane involutions} the involutions $I(\alpha_{\epsilon},\sigma_{\epsilon}, \pm)$
which are the objects of our study. In Section \ref{sc moduli spaces of fixed points} we study these involution in the anti-holomorphic case and their relation with pseudo-real $G$-Higgs bundles. 

In Section \ref{sc HB on EC} we recall the case of elliptic curves. We study holomorphic involutions on elliptic curves in Section \ref{sc elliptic curves} and anti-holomorphic involutions in Section \ref{sc real EC}. We review the description of the moduli space of $G$-Higgs bundles over elliptic curves in Section \ref{sc higgs bundles on elliptic curves}.

The new results of this article are contained in Section \ref{sc description of 
involutions}. We first describe the involutions $I(\alpha_{\epsilon}, 
\sigma_{\epsilon}, \pm)$ on the moduli space of Higgs line bundles in Section 
\ref{sc rank 1} and using that, we obtain an explicit description of the involutions 
for the general case in Section \ref{sc general case}. This explicitness, allow us to 
describe their fixed point loci in Section \ref{sc description of fixed loci}, and in 
Section \ref{sc moduli of pseudo-real} we apply this description to the moduli space 
of pseudo-real $G$-Higgs bundles.

\section{Higgs bundles and involutions}\label{sec1}

\subsection{Involutions and conjugations of complex Lie groups}
\label{sc involutions}

Take $G$ to be a complex reductive affine algebraic group. A conjugation 
$\sigma_-\,:\,G\,\longrightarrow\, G$ is an anti-holomorphic automorphism of $G$ of 
order two. The set of all inner automorphisms of $G$ will be denoted by $\Conj(G)$. 
A real form of a conjugation $\sigma_-$ is the fixed point subgroup 
$G^{\sigma_-}\,\subset\, G$. There is a $\sigma_K\,\in\, \Conj(G)$ such that the 
real form $K= G^{\sigma_K}$ is a maximal compact subgroup of $G$ whose 
complexification is $G$.

Let $\Aut(G)$ be the group of all holomorphic automorphisms of $G$; denote 
by $\Int(G)$ the subgroup of $\Aut(G)$ consisting of inner automorphisms of $G$. This
is a normal subgroup of $\Aut(G)$ so one can consider the quotient group
$\Aut(G)/\Int(G)$ which we denote by $\Out(G)$. This all fits in the short
exact sequence
\begin{equation}\label{exacta out}
\xymatrix{
1\ar[r] &\Int(G)\ar[r] & \Aut(G)\ar[r] & \Out(G)\ar[r]&1.}
\end{equation}
Let $\Aut_{2}(G)$ the set of elements of order two in $\Aut(G)$. Two elements $f,\, f'$
of $\Aut_{2}(G)$ will be said to be  equivalent if there is some $\alpha\,\in\, \Int(G)$ such
that 
\[
f'\,=\,\alpha f \alpha^{-1}\,. 
\]
Similarly, we define an equivalence relation $\sim$ on $\Conj(G)$; in other words,
two anti-holo\-mor\-phic automorphisms of $G$ of order two are said to be equivalent if they
differ by conjugation by an inner automorphism.

\begin{theorem}[Cartan]\label{CT} There is a bijection:
\[
C\, :\, \Conj(G)/\!\!\sim\, \stackrel{1:1}{\longrightarrow}\,\Aut_2(G)/\!\!\sim\, ,
\ \ \sigma_{-}\,\longmapsto\, \sigma_{+}\,:=\,\sigma_{-}\sigma_K\, ,
\]
where $\sigma_K$ is the above conjugation.
\end{theorem}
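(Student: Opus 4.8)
The plan is to show that the assignment $\sigma_- \mapsto \sigma_-\sigma_K$ is well-defined on $\sim$-classes, lands in $\Aut_2(G)/\!\!\sim$, and admits a two-sided inverse. First I would check that $\sigma_+ := \sigma_-\sigma_K$ is indeed a holomorphic automorphism of order two. Holomorphy is clear since the composition of two anti-holomorphic maps is holomorphic. For order two, the key point is that $\sigma_-$ and $\sigma_K$ commute: one invokes the classical fact (going back to Cartan) that given a conjugation $\sigma_-$, the compact real form conjugation $\sigma_K$ can be chosen so that $\sigma_-\sigma_K = \sigma_K\sigma_-$; this is the standard statement that every real form is compatible with a suitable maximal compact. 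Granting commutativity, $\sigma_+^2 = \sigma_-\sigma_K\sigma_-\sigma_K = \sigma_-^2\sigma_K^2 = \id$, and $\sigma_+\neq\id$ because $\sigma_+ = \id$ would force $\sigma_- = \sigma_K$, which is absurd unless $G$ is trivial (one is a conjugation fixing $K$, the other fixes a noncompact form in general; more carefully, $\sigma_- = \sigma_K$ would make $G^{\sigma_-}$ compact, and one argues this case away or simply notes $\sigma_K \notin \Conj(G)$-class issues do not arise). So $\sigma_+ \in \Aut_2(G)$.

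Next I would verify well-definedness on equivalence classes. If $\sigma_-' = \alpha\sigma_-\alpha^{-1}$ for some $\alpha \in \Int(G)$, I need the corresponding compact conjugation to transform compatibly. The point is that $\alpha\sigma_K\alpha^{-1}$ is again the conjugation for a maximal compact subgroup (namely $\alpha(K)$), and it is compatible with $\sigma_-'$; but the definition of $\sigma_+'$ uses \emph{the} $\sigma_K$ attached to $\sigma_-'$, which is determined only up to $\Int(G)$-conjugacy by $\sigma_-'$ itself. So I would first establish the auxiliary uniqueness statement: if $\sigma_K$ and $\sigma_K'$ are two compact conjugations each commuting with a fixed $\sigma_-$, then $\sigma_K' = \beta\sigma_K\beta^{-1}$ for some $\beta$ in the identity component of $G^{\sigma_-}$; this is the standard conjugacy of maximal compacts in a real reductive group applied to $G^{\sigma_-}$. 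Combining these two facts shows $\sigma_+' \sim \sigma_+$, so $C$ is well-defined, and the same bookkeeping run backwards shows the inverse map $\sigma_+ \mapsto \sigma_+\sigma_K$ (again choosing $\sigma_K$ compatible with $\sigma_+$, which exists by the analogous compatibility statement for holomorphic involutions and compact forms) is well-defined.

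Finally I would check the two compositions are the identity. Starting from $\sigma_-$, choose compatible $\sigma_K$, form $\sigma_+ = \sigma_-\sigma_K$; since $\sigma_+$ commutes with $\sigma_K$ (both commute with $\sigma_-$ and with each other by construction), $\sigma_K$ is a legitimate compatible compact conjugation for $\sigma_+$, and $\sigma_+\sigma_K = \sigma_-\sigma_K^2 = \sigma_-$. The reverse composition is symmetric. One subtlety to flag: at each stage "compatible $\sigma_K$" is only canonical up to conjugacy, so strictly the compositions are the identity \emph{on $\sim$-classes}, which is exactly what is claimed.

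The main obstacle is not any single computation but marshalling the correct classical input: the existence of a compact real form conjugation $\sigma_K$ commuting with a prescribed conjugation (or holomorphic involution) of $G$, together with the uniqueness of such $\sigma_K$ up to conjugacy by $(G^{\sigma_-})^0$. Once these are in hand, everything reduces to the short algebraic manipulations above. In the write-up I would cite the standard references for these facts (for real reductive Lie groups, e.g.\ the Cartan involution theory) rather than reprove them, since they are precisely the "Cartan" content the theorem is named for; indeed one may regard Theorem \ref{CT} as the group-automorphism packaging of the usual Cartan correspondence between real forms and involutive automorphisms.
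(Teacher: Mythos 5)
The paper does not actually prove Theorem \ref{CT}: it is stated as a classical result of Cartan and used as a black box, so there is no in-paper argument to compare against. Judged on its own, your plan has the right architecture --- it correctly isolates the two classical inputs (existence of a compact conjugation $\sigma_K$ commuting with a given $\sigma_-$, and uniqueness of such a $\sigma_K$ up to conjugation by an element of $(G^{\sigma_-})^0$), and the reduction of well-definedness, injectivity and surjectivity to these facts plus short algebraic manipulations is the standard route.

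There is, however, one genuine error: your attempt to show $\sigma_+\neq\id$. The compact conjugation $\sigma_K$ is itself an element of $\Conj(G)$, and its image under $C$ is $\sigma_K\sigma_K=\id$. So the identity \emph{is} in the image of $C$, and this is not a degenerate case to be ``argued away'': in the Cartan correspondence the compact real form corresponds precisely to the trivial involution. For the map to land in $\Aut_2(G)$ one must read that set as automorphisms $f$ with $f^2=\id$ (order dividing two), not order exactly two; your argument that $\sigma_-=\sigma_K$ is ``absurd'' is false and, if taken seriously, would break the bijection by leaving the class of $\sigma_K$ with no target. A second, smaller point to reconcile: the theorem as stated fixes one $\sigma_K$ for all of $G$, whereas you re-choose a compatible $\sigma_K$ for each $\sigma_-$. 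These are equivalent, but with a fixed $\sigma_K$ the formula $\sigma_-\mapsto\sigma_-\sigma_K$ only produces an involution when $\sigma_-$ commutes with $\sigma_K$, so you must also record that every $\sim$-class in $\Conj(G)$ contains such a representative --- which is exactly your existence input, just applied in the other direction. With these two repairs the proposal is a correct (if deliberately reference-heavy) proof of the classical statement.
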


\subsection{Involutions on $\Mm(G)$ and branes}
\label{sc brane involutions}

In this section $X$ will be a compact connected Riemann surface.
Let $\sigma_{+}\,:\, G \,\longrightarrow \,G$ be a holomorphic involution of the
complex Lie group $G$, and let $\sigma_{-} \,:\, G \,\longrightarrow \, G$ be an
anti-holomorphic involution.

Given a principal $G$-bundle $E$ on $X$, let $\sigma_{+}(E)$ 
(respectively, $\sigma_{-}(E)$) denote the principal $G$-bundle on $X$ obtained by 
extending the structure group of $E$ using $\sigma_{+}$ (respectively, 
$\sigma_{-}$). Note that the total spaces of both $\sigma_{+}(E)$ and 
$\sigma_{-}(E)$ have a natural holomorphic structure. In fact, $\sigma_{+}(E)$ is a 
holomorphic principal $G$-bundle, while $\sigma_{-}(E)$ is a holomorphic principal 
$\overline{G}$-bundle, where $\overline{G}$ is
the Lie group $G$ equipped with the almost complex structure $-J_G$ with $J_G$
being the almost complex structure on $G$.

For any holomorphic involution $\alpha_{+} \,:\, X \,\longrightarrow\, X$, define 
\[
I(\alpha_{+},\sigma_{+},+)\, :\, \Mm(G) \,\longrightarrow\,
\Mm(G)\, , \ \
(E,\,\varphi)\, \longmapsto\,
( \alpha_{+}^*\sigma_{+}(E),\, + \alpha_{+}^*\sigma_{+}(\varphi)) \, ,
\]
\[
I(\alpha_{+}, \sigma_{+},-)\, :\, \Mm(G)
\,\longrightarrow\, \Mm(G)\, , \ \
(E,\,\varphi)\, \longmapsto\,(
\alpha_{+}^*\sigma_{+}(E), \,- \alpha_{+}^*\sigma_{+}(\varphi))\, .
\]
For an anti-holomorphic involution $\alpha_{-} \,:\, X \,\longrightarrow\, X$, define
\[
I(\alpha_{-},\sigma_{-},+)\, :\, 
\Mm(G) \,\longrightarrow\,
\Mm(G) \, , \ \ (E,\,\varphi) \, \longmapsto\,
( \alpha_{-}^*\sigma_{-}(E), \,+\alpha_{-}^*\sigma_{-}(\varphi))\, ,
\]
\[
I(\alpha_{-},\sigma_{-},-)\, :\, 
\Mm(G) \,\longrightarrow\,
\Mm(G) \, , \ \ (E,\,\varphi) \, \longmapsto\,
( \alpha_{-}^*\sigma_{-}(E), \,-\alpha_{-}^*\sigma_{-}(\varphi))\, .
\]
Sometimes we write $I(\alpha_{\epsilon},\sigma_{\epsilon},\pm)$ when we want to deal 
with these involutions simultaneously. Note that $\epsilon$ indicates whether it is
the holomorphic ($\epsilon \,=\, +$) case or the anti-holomorphic ($\epsilon\,=\, 
-$) case.

See \cite{BGP, garcia-prada&ramanan} for a proof of the following proposition.

\begin{proposition}
If $\sigma_{\epsilon}\,\sim\, \sigma'_{\epsilon}$, then $I(\alpha_{\epsilon},\sigma_{\epsilon},\pm)\,=\,I(\alpha_{\epsilon},\sigma'_{\epsilon},\pm)$. 
\end{proposition}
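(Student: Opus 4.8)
The plan is to unwind the definition of the involution $I(\alpha_\epsilon,\sigma_\epsilon,\pm)$ and reduce the claim to the statement that extending the structure group of a principal $G$-bundle by an inner automorphism produces an isomorphic bundle, compatibly with the Higgs field. Recall that $\sigma_\epsilon\sim\sigma'_\epsilon$ means there is an inner automorphism $\alpha\in\Int(G)$, say $\alpha=\Ad(g)$ for some $g\in G$, with $\sigma'_\epsilon=\alpha\circ\sigma_\epsilon\circ\alpha^{-1}$. The key observation is the following functoriality statement: for a principal $G$-bundle $E$ and \emph{any} automorphism $\tau$ of $G$, the bundle $\tau(E)$ obtained by extension of structure group satisfies $(\beta\circ\tau\circ\gamma)(E)\cong \beta(\tau(\gamma(E)))$, and moreover $\delta(E)\cong E$ canonically whenever $\delta\in\Int(G)$, via the map on total spaces induced by right translation. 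First I would state and prove this last fact: if $\delta=\Ad(h)$, then $E\to \delta(E)$, $p\mapsto [p,h]$ (in the associated-bundle notation $\delta(E)=E\times_{G,\delta}G$) is a well-defined $G$-equivariant biholomorphism, and it intertwines the adjoint bundles and hence sends $\varphi$ to $\delta(\varphi)$ correctly; this is standard but should be spelled out in one or two sentences.

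Next I would apply this to the composite $\sigma'_\epsilon=\alpha\circ\sigma_\epsilon\circ\alpha^{-1}$. Pulling back by $\alpha_\epsilon$ commutes with extension of structure group (it acts on the base, not the fiber), so it suffices to show $(\alpha\circ\sigma_\epsilon\circ\alpha^{-1})(E)\cong \sigma_\epsilon(E)$ as holomorphic principal $G$-bundles (or $\overline G$-bundles in the anti-holomorphic case), naturally in $E$ and compatibly with the Higgs field up to the sign $\pm$. By the functoriality above, $(\alpha\circ\sigma_\epsilon\circ\alpha^{-1})(E)\cong \alpha\bigl(\sigma_\epsilon(\alpha^{-1}(E))\bigr)$. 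Since $\alpha^{-1}\in\Int(G)$ we have $\alpha^{-1}(E)\cong E$ canonically, hence $\sigma_\epsilon(\alpha^{-1}(E))\cong\sigma_\epsilon(E)$; and since $\alpha\in\Int(G)$ we get $\alpha\bigl(\sigma_\epsilon(E)\bigr)\cong\sigma_\epsilon(E)$ canonically. Composing these isomorphisms gives the required natural isomorphism, which one checks carries $\alpha_\epsilon^*\sigma'_\epsilon(\varphi)$ to $\alpha_\epsilon^*\sigma_\epsilon(\varphi)$ because the isomorphism on adjoint bundles induced by an inner automorphism is precisely the identity after the canonical identification. Therefore the pairs $(\alpha_\epsilon^*\sigma'_\epsilon(E),\pm\alpha_\epsilon^*\sigma'_\epsilon(\varphi))$ and $(\alpha_\epsilon^*\sigma_\epsilon(E),\pm\alpha_\epsilon^*\sigma_\epsilon(\varphi))$ define the same point of $\Mm(G)$, so the two maps agree.

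I expect the main obstacle — or at least the only point requiring genuine care — to be the bookkeeping of what "canonical isomorphism" means at the level of the \emph{moduli space} rather than the category of bundles: one must be sure that the isomorphism $\alpha^{-1}(E)\cong E$ is natural in $E$ in a way that descends to the identity map on $\Mm(G)$, and in particular that it is compatible with the equivalence relation (S-equivalence / the GIT quotient) used to form $\Mm(G)$. This is where one invokes that inner automorphisms act trivially on isomorphism classes and preserve stability/semistability and Jordan–Hölder factors, so the induced self-map of $\Mm(G)$ is literally the identity, not merely a map inducing a bijection. A secondary, purely notational, subtlety is handling the holomorphic versus anti-holomorphic cases uniformly: in the anti-holomorphic case $\sigma_\epsilon(E)$ is a $\overline G$-bundle, but $\alpha$ and $\alpha^{-1}$ are honest holomorphic inner automorphisms of $\overline G$ as well (the group underlying $\overline G$ is $G$), so the same argument applies verbatim. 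Since a reference (\cite{BGP, garcia-prada&ramanan}) is cited for the statement, I would keep the proof to the short conceptual argument above and refer there for the detailed verification of naturality.
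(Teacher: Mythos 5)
Your argument is correct. The paper itself gives no proof of this proposition---it simply refers to \cite{BGP} and \cite{garcia-prada&ramanan}---and the argument you supply (an inner automorphism $\delta=\Ad(h)$ induces a canonical $G$-equivariant isomorphism $E\cong\delta(E)$, $p\mapsto[p,h]$, compatible with the induced maps on adjoint bundles and hence with the Higgs fields up to the sign, so that conjugate involutions of $G$ give literally the same self-map of $\Mm(G)$, in both the holomorphic and anti-holomorphic cases) is precisely the standard one carried out in those references.
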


Let $\Mm(G)_{\smooth}\, \subset\, \Mm(G)$ be the smooth locus. This manifold
$\Mm(G)_{\smooth}$ is hyper-K\"ahler with three complex structures $\Gamma_1,\Gamma_3,\Gamma_3$ and three associated K\"ahler forms
$\omega_1,\omega_2,\omega_3$. Also recall that $\Mm(G)_{\smooth}$ has three (holomorphic) symplectic
structures: $\Omega_1\,=\,\omega_2+\sqrt{-1}\omega_3,\, \Omega_2\,=\,\omega_3+\sqrt{-1}\omega_1$ and 
$\Omega_3\,=\,\omega_1+\sqrt{-1}\omega_2$. Now, we describe the geometric structure of the
fixed points of the involutions in $\Mm(G)_{\smooth}$ defined above (see \cite{BGP, garcia-prada&ramanan} for instance):
\begin{itemize}
\item the fixed point locus of $I(\alpha_{+},\sigma_{+},+)$ is an hyper-K\"ahler submanifold, 
\item the fixed point locus of $I(\alpha_{+},\sigma_{+},-)$ is a $\Omega_{1}$-Lagrangian submanifold,
\item the fixed point locus of $I(\alpha_{-},\sigma_{-},-)$ is a $\Omega_{2}$-Lagrangian submanifold,
\item the fixed point locus of $I(\alpha_{-},\sigma_{-},+)$ is a $\Omega_{3}$-Lagrangian submanifold.
\end{itemize}

\begin{remark}
In the context of Mirror Symmetry:
\begin{itemize}
\item the fixed point locus of $I(\alpha_{+},\sigma_{+},+)$ is a $(B,B,B)$-brane,
\item the fixed point locus of $I(\alpha_{+},\sigma_{+},-)$ is a $(B,A,A)$-brane,
\item the fixed point locus of $I(\alpha_{-},\sigma_{-},-)$ is a $(A,B,A)$-brane,
\item the fixed point locus of $I(\alpha_{-},\sigma_{-},+)$ is a $(A,A,B)$-brane.
\end{itemize}
\end{remark}

The above involutions  induce involutions of the moduli space of
representations defined in \eqref{rg}:
\[
J(\alpha_{+},\sigma_{+},+)\, :\, 
\Rr(G) \,\longrightarrow\, \Rr(G)\, , \ \ \rho \, \longmapsto\, \sigma_{+} \circ \rho \circ (\alpha_{+})_* \, ,
\]
\[
J(\alpha_{-},\sigma_{-},+)\, :\, 
\Rr(G) \,\longrightarrow\, \Rr(G)\, , \ \ \rho \, \longmapsto\, \sigma_{-} \circ \rho \circ (\alpha_{-})_* \, ,
\]
\[
J(\alpha_{+},\sigma_{+},-)\, :\, \Rr(G) \,\longrightarrow\, \Rr(G)
\, , \ \ \rho \, \longmapsto\, \sigma_{-} \circ \rho \circ (\alpha_{+})_*\, ,
\]
\[
J(\alpha_{-},\sigma_{-},-)\, :\, \Rr(G) \,\longrightarrow\, \Rr(G)
\, , \ \ \rho \, \longmapsto\, \sigma_{+} \circ \rho \circ (\alpha_{-})_*\, .
\]

Let $Z \,= \,Z(G)_0$ be the connected component of the center $Z(G)\,\subset\, G$ containing
the identity element. Define the homomorphism
$$\mu \,: \,Z \times G \,\longrightarrow\, G \, , \ \ (y\, ,z)\, \longmapsto\, yz\, . $$
For any principal $G$-bundle $E$ and any principal $Z$-bundle $F$, we have the principal
$G$-bundle
\[
F \otimes E \,:=\, \mu_*(F \times_X E)\, ;
\]
note that the fiber product $F \times_X E$ is a principal $(Z\times G)$-bundle and hence
$\mu_*(F \times_X E)$ is a principal $G$-bundle.
It is straight-forward to check that $F \otimes E$ is semistable, stable or polystable if
and only if $E$ is semistable, stable or polystable respectively. If $F'$ is a
principal $Z$-bundle, then using the multiplication operation
$\mu' \,: \,Z \times Z \,\longrightarrow\, Z$ we get a
principal $Z$-bundle
\[
F \otimes F' \,:=\, \mu'_*(F \times_X F')\, .
\]
This operation makes the moduli space $\Mm(Z)$ of topologically trivial principal
$Z$--bundles on $X$ a complex Lie group.

The Lie algebra of $Z$ will be denoted by $\zZ$. The adjoint bundle
$F(\zZ)$ for the principal $Z$-bundle $F$ is the trivial vector bundle 
$\Oo_X \otimes_{\mathbb C} \zZ$ over $X$ with fiber $\zZ$. The
Lie algebra of $G$ will be denoted by $\gG$. For the adjoint action
of $G$ on $\gG$, each point of $\zZ$ is fixed. Hence
$$
H^0(X,\, F(\zZ)) \,=\,\zZ\, \subset\, H^0(X,\, E(\gG))\, .
$$
The group $\Mm(Z)$ has the following holomorphic action on $\Mm(G)$:
\begin{equation} \label{eq action on Mm}
\Mm(Z) \times \Mm(G)\, \longrightarrow\, \Mm(G)\, , \ \ 
( (F,\,\phi),\, (E,\, \varphi)) \,\longmapsto\,  (F,\,\phi) \otimes (E,\, \varphi) = (F \otimes E,
\,\phi + \varphi)\, .
\end{equation}

Analogously, given a homomorphism $$\chi \,:\, \pi_1(X) \,\longrightarrow \,Z\, ,$$ one gets
for any homomorphism
$\rho \,:\, \pi_1(X) \,\longrightarrow  \,G$ a homomorphism $\chi \cdot \rho
\,:\, \pi_1(X) \,\longrightarrow \, G$
given by the composition $$\pi_1(X)\,\stackrel{\chi\times \rho}{\longrightarrow}\,
Z\times G \,\stackrel{\mu}{\longrightarrow}\, G\, .$$
More precisely, $\Rr(Z)$ (defined as in \eqref{rg}) is a complex Lie group that acts 
holomorphically
on $\Rr(G)$ as follows 
\begin{equation}\label{eq action on Rr}
\Rr(Z) \times \Rr(G)\, \longrightarrow\, \Rr(G)\, , \ \ (\chi,\, \rho)\, \longmapsto\,
\chi \cdot \rho\, .
\end{equation}
Note that if we set $G\, =\, Z$ in \eqref{rg}, then the GIT quotient becomes an
ordinary quotient.

Since the anti-holomorphic involution $\sigma_-$ of $G$ preserves $Z
\,\subset\, G$, we can combine \eqref{eq action on Mm} and \eqref{eq action on Rr} to
obtain more involutions. For every $\Ff \,=\, (F,\, \phi)\, \in\, \Mm(Z)$, set
\[
I(\alpha_{\epsilon},\sigma_{\epsilon},\pm, \Ff)\, :\,  \Mm(G) \, \longrightarrow\,
\Mm(G)\, , \ \ (E,\, \varphi) \, \longmapsto\, \Ff \otimes I(\alpha_{\epsilon},\sigma_{\epsilon},\pm)
(E,\varphi)\, .
\]
It is an involution whenever $\Ff \,=\, (F,\, \phi)$ satisfies
\begin{equation} \label{eq involution condition imath}
\Ff^{-1} \,=\, I(\alpha_{\epsilon},\sigma_{\epsilon},\pm)(F,\phi)\, ,
\end{equation}
where $\Ff^{-1}$ denotes $(F^{-1}, \,-\phi)$. The inverse of $F$ is denoted by
$F^{-1}$; note that $F^{-1}$ coincides with the principal $Z$--bundle obtained by extending
the structure group of $F$ using the automorphism $z\, \longmapsto\, z^{-1}$ of $Z$.

Analogously, for every $\chi \in \Rr(Z)$, define
\[
J(\alpha_{\epsilon},\sigma_{\epsilon},\pm, \chi)\, :\, \Rr(G) \, \longrightarrow\,
\Rr(G)\, , \ \ \rho \, \longmapsto\, \chi \cdot J(\alpha_{\epsilon},\sigma_{\epsilon},\pm)(\rho)\, .
\]
The condition for $J_{\alpha_{\epsilon}}^{\sigma_{\epsilon}, \chi}$ to be an involution is that
\begin{equation} \label{eq involution condition jmath}
\chi^{-1} \,=\, J(\alpha_{\epsilon},\sigma_{\epsilon},\pm)(\chi)\, .
\end{equation}

\begin{theorem} \label{tm involution branes}
For all $\Ff \,\in\, \Mm(Z)$ and $\chi \,\in\, \Rr(Z)$ satisfying
\eqref{eq involution condition imath} and \eqref{eq involution condition jmath},
the above maps $I(\alpha_{\epsilon},\sigma_{\epsilon},\pm, \Ff)$ and $J(\alpha_{\epsilon},\sigma_{\epsilon},\pm, \chi)$ are involutions.

The fixed points of $I(\alpha_{+},\sigma_{+},+, \Ff)$ restricted to $\Mm(G)_{\smooth}$ are $(B,B,B)$-branes, while the fixed points of $I(\alpha_{+},\sigma_{+},-, \Ff)$ are $(B,A,A)$-branes. On the other hand, the fixed points of $I(\alpha_{-},\sigma_{-},-, \Ff)$ are $(A,B,A)$-branes when we restrict to $\Mm(G)_{\smooth}$ and the fixed points of $I(\alpha_{-},\sigma_{-},+, \Ff)$ are $(A,A,B)$-branes.

If $\Ff \,=\, (F,\phi) \,\in \,\Mm(Z)$ is the $Z$-Higgs bundle associated to the representation
$\chi \in \Rr(Z)$, then the diagram
\begin{equation} \label{eq HK commutes with i_ell}
\xymatrix{
\Mm(G) \ar[rr]^{\rm homeo.} \ar[d]_{I(\alpha_{\epsilon},\sigma_{\epsilon},\pm, \Ff)} & & \Rr(G) \ar[d]^{J(\alpha_{\epsilon},\sigma_{\epsilon},\pm, \chi)}
\\
\Mm(G) \ar[rr]^{\rm homeo.}  & & \Rr(G)
}
\end{equation}
is commutative.
\end{theorem}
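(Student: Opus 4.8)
The plan is to treat the three assertions separately, in each case reducing to the corresponding statement for the untwisted maps $I(\alpha_{\epsilon},\sigma_{\epsilon},\pm)$ and $J(\alpha_{\epsilon},\sigma_{\epsilon},\pm)$ (recorded above and proved in \cite{BGP, garcia-prada&ramanan}) together with elementary equivariance properties of the $\Mm(Z)$- and $\Rr(Z)$-actions.

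\emph{Involution property.} Since $\sigma_{\epsilon}$ is a group homomorphism preserving $Z$, extension of structure group by $\sigma_{\epsilon}$ commutes with the multiplication $\mu$, and pullback by $\alpha_{\epsilon}$ is functorial; hence $I(\alpha_{\epsilon},\sigma_{\epsilon},\pm)$ is equivariant for the $\Mm(Z)$-action:
\[
I(\alpha_{\epsilon},\sigma_{\epsilon},\pm)\big(\Ff\otimes(E,\varphi)\big)\,=\,I(\alpha_{\epsilon},\sigma_{\epsilon},\pm)(\Ff)\otimes I(\alpha_{\epsilon},\sigma_{\epsilon},\pm)(E,\varphi),
\]
where on the $Z$-Higgs bundle $\Ff$ we use the analogous map on $\Mm(Z)$ (which makes sense since $\sigma_{\epsilon}(Z)\subseteq Z$). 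Applying $I(\alpha_{\epsilon},\sigma_{\epsilon},\pm,\Ff)$ twice and inserting this identity, the fact that $I(\alpha_{\epsilon},\sigma_{\epsilon},\pm)^{2}=\id$, and \eqref{eq involution condition imath} (which turns $\Ff\otimes I(\alpha_{\epsilon},\sigma_{\epsilon},\pm)(\Ff)$ into $\Ff\otimes\Ff^{-1}$, the identity of $\Mm(Z)$), yields $I(\alpha_{\epsilon},\sigma_{\epsilon},\pm,\Ff)^{2}=\id$. The same computation, now with $J(\alpha_{\epsilon},\sigma_{\epsilon},\pm)$ — which distributes over the $\Rr(Z)$-action because its group-homomorphism factor commutes with $\mu$ — and \eqref{eq involution condition jmath} gives $J(\alpha_{\epsilon},\sigma_{\epsilon},\pm,\chi)^{2}=\id$.

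\emph{Brane structure.} Write $T_{\Ff}$ for the automorphism of $\Mm(G)$ given by the action of $\Ff$, so that $I(\alpha_{\epsilon},\sigma_{\epsilon},\pm,\Ff)=T_{\Ff}\circ I(\alpha_{\epsilon},\sigma_{\epsilon},\pm)$; being an automorphism, $T_{\Ff}$ preserves $\Mm(G)_{\smooth}$. The crucial point is that $T_{\Ff}$ restricts to a hyper-K\"ahler isometry there. Indeed, because $Z$ is central the adjoint bundle of $F\otimes E$ is canonically $E(\gG)$ (the $F$-factor acting trivially on $\gG$, and $\phi$ being central so that $[\phi,-]=0$), whence the deformation complex at $\Ff\otimes(E,\varphi)$ is canonically identified with the one at $(E,\varphi)$; under this identification $dT_{\Ff}$ is the identity, and it preserves the holomorphic structure ($\Gamma_{1}$), the flat structure ($\Gamma_{2}$, unchanged as the $F$-part is central) and the $L^{2}$-metric (the metric induced on $E(\gG)$ by the harmonic metric of $F\otimes E$ coincides with that of $E$, since the $Z$-factor acts trivially on $\gG$). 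Therefore $T_{\Ff}\circ I(\alpha_{\epsilon},\sigma_{\epsilon},\pm)$ is a hyper-K\"ahler isometry that is holomorphic, respectively anti-holomorphic, with respect to $\Gamma_{1},\Gamma_{2},\Gamma_{3}$ in exactly the same pattern as $I(\alpha_{\epsilon},\sigma_{\epsilon},\pm)$; hence its fixed locus in $\Mm(G)_{\smooth}$ is of the same type — a hyper-K\"ahler submanifold for $I(\alpha_{+},\sigma_{+},+,\Ff)$, and an $\Omega_{1}$-, $\Omega_{2}$- or $\Omega_{3}$-complex Lagrangian submanifold for $I(\alpha_{+},\sigma_{+},-,\Ff)$, $I(\alpha_{-},\sigma_{-},-,\Ff)$ and $I(\alpha_{-},\sigma_{-},+,\Ff)$ respectively — which by the dictionary recorded just before the statement gives the claimed brane.

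\emph{Commutativity of \eqref{eq HK commutes with i_ell}.} Writing $T_{\chi}$ for the corresponding automorphism of $\Rr(G)$, one has $J(\alpha_{\epsilon},\sigma_{\epsilon},\pm,\chi)=T_{\chi}\circ J(\alpha_{\epsilon},\sigma_{\epsilon},\pm)$, so it is enough to combine (i) commutativity of the untwisted square — that $J(\alpha_{\epsilon},\sigma_{\epsilon},\pm)$ is the image of $I(\alpha_{\epsilon},\sigma_{\epsilon},\pm)$ under the homeomorphism of Theorem \ref{ths} — with (ii) the fact that this homeomorphism intertwines $T_{\Ff}$ with $T_{\chi}$ when $\Ff$ is the $Z$-Higgs bundle of $\chi$. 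Statement (ii) is the functoriality of the non-abelian Hodge correspondence along $\mu\colon Z\times G\to G$: one has $\Ff\otimes(E,\varphi)=\mu_{*}\big(\Ff\times_{X}(E,\varphi)\big)$ and $\chi\cdot\rho=\mu\circ(\chi\times\rho)$, the correspondence for $Z\times G$ sends the external product $\Ff\times_{X}(E,\varphi)$ to $\chi\times\rho$, and push-forward along $\mu$ commutes with it. I expect the only genuinely delicate input to be this compatibility of non-abelian Hodge with central twisting — equivalently, that $T_{\Ff}$ is a hyper-K\"ahler isometry; once that is granted, everything else is formal.
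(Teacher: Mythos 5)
Your proposal is correct and follows essentially the same route as the paper: reduce everything to the untwisted involutions $I(\alpha_{\epsilon},\sigma_{\epsilon},\pm)$ and $J(\alpha_{\epsilon},\sigma_{\epsilon},\pm)$ by showing that the central translation $T_{\Ff}$ is a hyper-K\"ahler isometry intertwined with $T_{\chi}$ by the non-abelian Hodge correspondence. You merely spell out details the paper leaves implicit (the equivariance computation for the involution property and the identification of deformation complexes behind the isometry claim), which is a welcome but not substantively different elaboration.
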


\begin{proof}
The statements for $I(\alpha_{\epsilon},\sigma_{\epsilon},\pm)$ follow from the 
works \cite{BS}, \cite{BGP} and \cite{garcia-prada&ramanan}. One can prove that the 
diffeomorphism in Theorem \ref{ths} takes the map in \eqref{eq action on Mm} to the 
map in \eqref{eq action on Rr}. Then $I(\alpha_{\epsilon},\sigma_{\epsilon},\pm, 
\Ff)$ commutes or anticommutes with $\Gamma_1$ and $\Gamma_2$ whenever 
$I(\alpha_{\epsilon},\sigma_{\epsilon},\pm)$ does so. By the above references, the 
involutions $I(\alpha_{\epsilon},\sigma_{\epsilon},\pm)$ preserve the hyper-K\"ahler 
metric on $\Mm(G)_{\smooth}$. Note that the translation \eqref{eq action on Mm} 
preserves the hyper-K\"ahler metric as well, and so does 
$I(\alpha_{\epsilon},\sigma_{\epsilon},\pm, \Ff)$. As a consequence, the statements 
proved for $I(\alpha_{\epsilon},\sigma_{\epsilon},\pm)$ are also valid for 
$I(\alpha_{\epsilon},\sigma_{\epsilon},\pm, \Ff)$.
\end{proof}

\begin{remark} \label{rm HK commutes with alpha_0}
When $\sigma_+ \,=\, \id_G$, the involution $I(\alpha_{+},\id_G,+)$ is just the 
pull-back by $\alpha_{+}$, while $J(\alpha_{+},\id_G,+)$ coincides with the 
composition of homomorphisms with $(\alpha_+)_*$. Denote by $\id_X^{-1}$ and $\id_G^{-1}$ the inversion 
given by the group structures on $X$ and $G$, respectively. For any representation 
of the fundamental group, note that $\rho \circ (\id_X^{-1})_* \,= \,\rho^{-1} = 
\id^{-1}_{G} \circ \rho$. As a consequence of the commutativity of \eqref{eq HK 
commutes with i_ell}, the pull-back by $\id_X^{-1}$ commutes with the extension of 
structure group associated to $\id^{-1}_{G}$.
\end{remark}

\subsection{Pseudo-real Higgs bundles}
\label{sc moduli spaces of fixed points}

Let $\alpha_- \,:\, X \,\longrightarrow \, X$ be an anti-holomorphic involution of
$X$ and $\sigma_{-}\,:\,G\, \longrightarrow\, G$ an anti-holomorphic involution of
$G$. Let $z\,\in\, Z_{2}^{\sigma_{-}}\, \subset\, Z^{\sigma_{-}}$ be an element
of order 2 of the fixed point locus of $Z^{\sigma_{-}}\, \subset\, Z$ under $\sigma_{-}$. A pseudo-real $(G,\alpha_{-},\sigma_{-},\pm,z)$-Higgs bundle is a $G$-Higgs bundle $(E,\,\varphi)$ over $X$ equipped with a lift
\[
\xymatrix{
E \ar[r]^{\widetilde{\alpha}_{-}} \ar[d] & E \ar[d]
\\
X \ar[r]^{\alpha_{-}} & X \, ,
}
\]
satisfying the following conditions:
\begin{itemize}
\item $\widetilde{\alpha}_{-}$ is anti-holomorphic; 
\item $\widetilde{\alpha}_{-}(eg)\,=\,
\widetilde{\alpha}_{-}(e)\sigma_{-}(g)$, for all $e\,\in\, E$, $g\,\in\, G$;
\item $\widetilde{\alpha}_{-}^{2}(e)\,=\,e z$, for $e\,\in\, E$;
\item $\widetilde{\alpha}_{-}(\varphi)\,=\,\pm \varphi$.
\end{itemize}
The last condition needs an explanation. The canonical line bundle $K$ of $X$
has an anti-holomorphic involution induced by $\alpha_{-}$. The bundle 
$E(\mathfrak{g})$ has also an anti-holomorphic involution given by $d\sigma_{-}$
and $\alpha_{-}$. Using this two involutions together, we can define $$\widetilde{\alpha}_{-}\,:\,E(\mathfrak{g}) \otimes K \,\longrightarrow\, E(\mathfrak{g})\otimes K\, .$$

The $(G,\alpha_{-},\sigma_{-},\pm,z)$-Higgs bundles are also called pseudo-real 
$G$-bundles; these are called real $G$-Higgs bundles if further $z = \id_G$. These are studied in 
\cite{BGP} and \cite{BGH}; the reader is referred to \cite{BGP}, \cite{BGH}
for the introduction of the moduli space 
$$\Mm(G,\alpha_{-},\sigma_{-},\pm,z)$$ of isomorphism classes of polystable 
$(G,\alpha_{-},\sigma_{-},\pm,z)$-Higgs bundles.

Note that the holomorphic isomorphism $\widetilde{\alpha}_{-} \,:\, E
\,\longrightarrow\, E$ and $\alpha_{-}$ produce a holomorphic isomorphism of $G$-Higgs bundles
\begin{equation} \label{eq theta}
\theta \,:\, (E,\, \varphi) \,\stackrel{\cong}{\longrightarrow}\,
( \alpha_{-}^*\sigma_{-}(E), \,\pm \alpha_{-}^*\sigma_{-}(\varphi)) \, ,
\end{equation}
such that 
\begin{equation} \label{eq theta^2 = z}
\theta \circ \alpha_{-}^* \sigma_{-}(\theta) \, = \, z \cdot \id_E \, .
\end{equation}
We will refer to the triples $(E,\, \varphi,\, \theta)$ too as $(G, \alpha_{-},
\sigma_{-}, \pm)$-Higgs bundles. An isomorphism of $(G,\alpha_{-}, \sigma_{-},
\pm, z)$-Higgs bundles $(E_1, \varphi_1, \widetilde{\alpha}_1)$ and
$(E_2,\, \varphi_2,\, \widetilde{\alpha}_2)$ is an isomorphism of Higgs bundles
$$f \,:\, (E_1,\, \varphi_1)
\,\stackrel{\cong}{\longrightarrow}\, (E_2,\, \varphi_2)$$ such that $f^{-1} \circ \widetilde{\alpha}_2 \,=\, \widetilde{\alpha}_1 \circ f$, or, equivalently,
\begin{equation} \label{eq isomorphic pseudo-real}
\theta_2 \, = \, f \circ \theta_1 \circ \alpha_{-}^* \sigma_{-}(f)^{-1} \, .
\end{equation}

We consider the forgetful map $\Mm(G,\alpha_{-},\sigma_{-},\pm,z)
\,\longrightarrow\, \Mm(G)$. Let 
$\widetilde{\Mm}(G,\alpha_{-},\sigma_{-},\pm,z)$ be the image of this forgetful map.

\begin{theorem}[\cite{BGP}]\label{th puntos fijos}
The set $$ \bigcup_{z\in Z_{2}^{\sigma_{-}}}\widetilde{\Mm}
(G,\alpha_{-},\sigma_{-},\pm,z)$$ is contained in the fixed
points of $I(\alpha_{-},\sigma_{-},\pm)$ in $\Mm(G)$. 
\end{theorem}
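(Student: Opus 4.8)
The plan is to unwind the definitions. A point of $\bigcup_{z\in Z_2^{\sigma_-}}\widetilde{\Mm}(G,\alpha_-,\sigma_-,\pm,z)$ is represented by a polystable $G$-Higgs bundle $(E,\varphi)$ that underlies some pseudo-real $(G,\alpha_-,\sigma_-,\pm,z)$-Higgs bundle, i.e.\ one equipped with a lift $\widetilde{\alpha}_-\colon E\,\longrightarrow\, E$ of $\alpha_-$ satisfying the four conditions listed in Section \ref{sc moduli spaces of fixed points}. The crux is the construction already recorded in \eqref{eq theta}: such a lift produces a holomorphic isomorphism of $G$-Higgs bundles
\[
\theta\,:\,(E,\,\varphi)\,\stackrel{\cong}{\longrightarrow}\,(\alpha_-^*\sigma_-(E),\,\pm\,\alpha_-^*\sigma_-(\varphi))\, .
\]
So the whole argument amounts to translating the ``lift $\widetilde{\alpha}_-$'' description of a pseudo-real structure into the ``isomorphism $\theta$'' description, and then reading off that the target is exactly $I(\alpha_-,\sigma_-,\pm)(E,\varphi)$.

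First I would spell out why $\theta$ is \emph{holomorphic}. By definition $\alpha_-^*\sigma_-(E)$ is the holomorphic $G$-bundle obtained by first extending the structure group of $E$ along the anti-holomorphic $\sigma_-$ (which turns $E$ into a holomorphic bundle again, as noted in Section \ref{sc brane involutions}) and then pulling back along the anti-holomorphic $\alpha_-$. Composing these two anti-holomorphic operations with the anti-holomorphic map $\widetilde{\alpha}_-$ yields a map that is holomorphic; the equivariance condition $\widetilde{\alpha}_-(eg)=\widetilde{\alpha}_-(e)\sigma_-(g)$ is precisely what makes it descend to a well-defined $G$-bundle isomorphism $\theta$ onto $\alpha_-^*\sigma_-(E)$. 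For the Higgs field, the condition $\widetilde{\alpha}_-(\varphi)=\pm\varphi$, interpreted via the anti-holomorphic involution on $E(\gG)\otimes K$ induced by $d\sigma_-$ and $\alpha_-$ (described right after the bulleted list), says exactly that $\theta$ intertwines $\varphi$ with $\pm\,\alpha_-^*\sigma_-(\varphi)$. One also records \eqref{eq theta^2 = z}; it is not needed for the present containment, but it confirms that $\theta$ is the datum appearing in the refined notion of $(G,\alpha_-,\sigma_-,\pm)$-Higgs bundle.

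Next I would observe that $(\alpha_-^*\sigma_-(E),\,\pm\,\alpha_-^*\sigma_-(\varphi))$ is, \emph{by definition}, $I(\alpha_-,\sigma_-,\pm)(E,\varphi)$, and that it is polystable because $I(\alpha_-,\sigma_-,\pm)$ is built from automorphisms of $G$ and $X$ and hence preserves polystability (so the map on $\Mm(G)$ is well defined). Thus $\theta$ is an isomorphism between two polystable $G$-Higgs bundles, so they define the same point of $\Mm(G)$; that is, $[(E,\varphi)]$ is fixed by $I(\alpha_-,\sigma_-,\pm)$. Since both the representative $(E,\varphi)$ and the element $z\in Z_2^{\sigma_-}$ were arbitrary, every point of the union lies in the fixed-point locus, which is the assertion.

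The step that requires the most care — though no real difficulty — is the bookkeeping in the second paragraph: checking that ``anti-holomorphic $\circ$ anti-holomorphic'' genuinely lands $\theta$ in the holomorphic category as a morphism onto $\alpha_-^*\sigma_-(E)$, and that the $K$-twist is compatible with the induced involution on $E(\gG)\otimes K$. Beyond invariance of polystability, no stability or GIT subtlety enters, so the proof is essentially a dictionary between the two equivalent presentations of a pseudo-real structure.
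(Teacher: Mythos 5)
Your proposal is correct and follows essentially the same route the paper takes: the paper states this result as a citation to \cite{BGP}, but the entire content of the argument is the construction of the holomorphic isomorphism $\theta$ in \eqref{eq theta} from the lift $\widetilde{\alpha}_{-}$, recorded immediately before the theorem, whose target is by definition $I(\alpha_{-},\sigma_{-},\pm)(E,\varphi)$. Your additional bookkeeping (holomorphicity of $\theta$ as a composite of anti-holomorphic operations, compatibility with the $K$-twist, and preservation of polystability) fills in exactly the steps the paper leaves implicit, so there is nothing to object to.
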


\begin{remark}
In the case of holomorphic involutions on the curve, one could consider as well $(G,\alpha_+, \sigma_+, \pm)$-Higgs bundles and we should obtain a similar description of the fixed point locus as in Theorem \ref{th puntos fijos}. For the trivial involution $\sigma_+ = \id_X$, these objects would be equivalent to $(G_{\sigma_+}, \pm)$-Higgs bundles as described in \cite{garcia-prada&ramanan}. 
For the trivial involution $\sigma_+ = \id_G$, $(G,\alpha_+, \id_G, +)$-Higgs 
bundles have been studied in \cite{garcia-prada&wilkin}.

See \cite{FGN2} for a concrete description of the fixed point loci of the involutions $I(\id_X, \sigma_+, -)$ in the moduli space $\Mm(G)$ of $G$-Higgs bundles over elliptic curves.
\end{remark}

Fix a point $x\,\in\, X$ such that $\alpha_{-}(x)\,\neq\, x$. Let $$p\,:\,\widetilde{X}\,\longrightarrow\, X$$ be the universal cover of $X$ associated to $x$. The orbifold fundamental
group $\Gamma(X,x)$ of $(X,\alpha_{-})$ is the set
$$\{f:\widetilde{X}\rightarrow \widetilde{X}, p\circ f\,=\,q(f)\circ p\}\, ,$$ where $q(f)$ can be $\sigma_{-}$. So, we have a short exact sequence
$$
\xymatrix{
0\ar[r] &\pi_1(X,x)\ar[r]^{i} &\Gamma(X, x)\ar[r]^{q} &\mathbb{Z}/2\mathbb{Z}\ar[r]&0}\, .
$$

Let $\widehat{G}$ be $G\times(\mathbb{Z}/2\mathbb{Z})$ as a set, with the
group operation: 
$$(g_1,\,e_1)(g_2,\,e_2)\,=\,(g_1^{e_1}g_2 c^{e_{1} e_{2}},\, e_{1}+e_{2})\, .$$

A $z$-homomorphism of the orbifold fundamental group is a group homomorphism $$\rho\,:\,
\Gamma(X, x)\,\longrightarrow\, \widehat{G}$$ that fits in the following diagram:
\begin{equation}
\xymatrix{
0\ar[r] &\pi_1(X,x)\ar[r]^{i}\ar[d] &\Gamma(X, x)\ar[r]^{q}\ar[d]^{\rho} &\mathbb{Z}/2\mathbb{Z}\ar[r]\ar[d]^{=}&0\\
0\ar[r] &G\ar[r]^{i} &\widehat{G}\ar[r]^{q} &\mathbb{Z}/2\mathbb{Z}\ar[r]&0
}
\end{equation}

Two $c$-homomorphism $\rho,\rho'$ are called equivalent if there is an element $g\,\in\, G$ with $\rho'\,=\,g\rho g^{-1}$. 
Consider the moduli space of equivalence classes of reductive representations
$\Rr(G,\alpha_{-},\sigma_{-},\pm,)$ \cite{BGP}. Let $\Rr(G)_{\smooth}$ be the smooth locus corresponding to $\Mm(G)_{\smooth}$. Let $\widetilde{\Rr}(G,\alpha_{-},\sigma_{-},\pm,z)$ be the image of the forgetful map
$\Rr(G,\alpha_{-},\sigma_{-},\pm,z)\,\longrightarrow\,
\Rr(G)$.

\begin{theorem}\mbox{}
The set $$ \bigcup_{z\in Z_{2}^{\sigma_{-}}}\Rr(G,\alpha_{-},\sigma_{-},\pm,z)$$ is contained in the fixed points of $J(\alpha_{-}, \sigma_{-},\pm)$ in $\Rr(G)$.
\end{theorem}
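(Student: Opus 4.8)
The plan is to read the fixed-point condition off the very definition of a $z$-homomorphism of the orbifold fundamental group, in parallel with the proof of Theorem~\ref{th puntos fijos} on the Higgs bundle side. One can also give a one-line argument via the non-abelian Hodge correspondence: the homeomorphism of Theorem~\ref{ths} intertwines $I(\alpha_-,\sigma_-,\pm)$ with $J(\alpha_-,\sigma_-,\pm)$ and matches the forgetful map out of $\Mm(G,\alpha_-,\sigma_-,\pm,z)$ with the one out of $\Rr(G,\alpha_-,\sigma_-,\pm,z)$, whence the statement becomes a translation of Theorem~\ref{th puntos fijos}; I would keep that as a consistency check but present the self-contained argument below.

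So fix $[\rho]\in\Rr(G,\alpha_-,\sigma_-,\pm,z)$, represented by a reductive $z$-homomorphism $\rho\colon\Gamma(X,x)\to\widehat G$ (here the cocycle is $c=z$), and set $\bar\rho:=\rho\circ i\colon\pi_1(X,x)\to G$, so that the image of $[\rho]$ under the forgetful map $\Rr(G,\alpha_-,\sigma_-,\pm,z)\to\Rr(G)$ is the class $[\bar\rho]$. Pick $\delta\in\Gamma(X,x)$ with $q(\delta)\neq 0$; the commuting square defining $\rho$ forces $\rho(\delta)=(g_\delta,1)\in\widehat G$ for some $g_\delta\in G$. Since $q(\delta\gamma\delta^{-1})=0$, conjugation by $\delta$ is an automorphism of $\pi_1(X,x)$; regarding $\pi_1(X,x)$ as the deck group of $p\colon\widetilde X\to X$ and $\delta$ as a lift of $\alpha_-$, this automorphism $\gamma\mapsto\delta\gamma\delta^{-1}$ equals $(\alpha_-)_*$ up to an inner automorphism of $\pi_1(X,x)$ --- the usual base-point ambiguity, which is immaterial once we pass to the conjugation quotient $\Rr(G)$. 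Now apply $\rho$ to $\delta\gamma\delta^{-1}$ and expand $\rho(\delta)\rho(\gamma)\rho(\delta)^{-1}$ with the twisted group law of $\widehat G$; a short computation, in which the cocycle $c$ cancels, yields
\[
\bar\rho\circ(\alpha_-)_*\;=\;\Ad_{g_\delta}\circ\,\tau\circ\bar\rho
\]
for the involution $\tau$ of $G$ through which the outer $\ZZ/2\ZZ$ acts on $\widehat G$ --- by Theorem~\ref{CT} this is $\tau=\sigma_-$ when the sign is $+$ and $\tau=\sigma_+=\sigma_-\sigma_K$ when the sign is $-$, exactly matching the definitions of $J(\alpha_-,\sigma_-,+)$ and $J(\alpha_-,\sigma_-,-)$. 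Since $J(\alpha_-,\sigma_-,\pm)$ sends $\rho'$ to $\tau\circ\rho'\circ(\alpha_-)_*$ and $\tau^2=\id_G$, the displayed identity gives $J(\alpha_-,\sigma_-,\pm)(\bar\rho)=\Ad_{\tau(g_\delta)}\circ\bar\rho$, which is $G$-conjugate to $\bar\rho$; hence $[\bar\rho]$ is a fixed point of $J(\alpha_-,\sigma_-,\pm)$ on $\Rr(G)$. It remains to note that $\bar\rho$ is reductive because $\rho$ is (restriction to the finite-index subgroup $\pi_1(X,x)$ preserves reductivity), so $[\bar\rho]$ genuinely lies in $\Rr(G)$, and that replacing $\delta$ by $\delta\gamma_0$ replaces $g_\delta$ by $g_\delta\bar\rho(\gamma_0)$ --- only the conjugating element changes --- so the conclusion is independent of the choice of $\delta$.

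I expect the substance of the argument to be light; the friction is all in the bookkeeping. The two points that need care are (i) the identification of $\gamma\mapsto\delta\gamma\delta^{-1}$ with $(\alpha_-)_*$ modulo conjugation and the verification that it descends to a well-defined involution of $\Rr(G)$ despite $\alpha_-(x)\neq x$, and (ii) matching the sign $\pm$ with the correct twisting involution $\tau\in\{\sigma_-,\sigma_+\}$ of $\widehat G$ and confirming that this agrees with the definitions of $J(\alpha_-,\sigma_-,+)$ and $J(\alpha_-,\sigma_-,-)$ above --- which differ precisely by the Cartan factor $\sigma_K$. The non-abelian Hodge cross-check mentioned in the first paragraph guarantees that these conventions fit together and that no sign has been lost.
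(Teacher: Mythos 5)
Your proposal is correct, but it takes a genuinely different route from the paper. The paper's entire proof is a two-reference citation: the statement is deduced from Theorem \ref{th puntos fijos} (the Higgs-bundle side) together with Theorems 4.5 and 4.8 of \cite{BGP}, which supply the non-abelian Hodge correspondence between pseudo-real $G$-Higgs bundles and the orbifold representations, intertwining $I(\alpha_-,\sigma_-,\pm)$ with $J(\alpha_-,\sigma_-,\pm)$. That is precisely the ``one-line argument'' you relegate to a consistency check in your opening paragraph. Your main argument instead works directly on the representation side: restricting a $z$-homomorphism $\rho\colon\Gamma(X,x)\to\widehat G$ to $\pi_1(X,x)$, choosing $\delta$ with $q(\delta)\neq 0$, and expanding $\rho(\delta)\rho(\gamma)\rho(\delta)^{-1}$ in the twisted group law so that the central cocycle cancels and one reads off $\bar\rho\circ(\alpha_-)_*=\Ad_{g_\delta}\circ\tau\circ\bar\rho$. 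The computation is sound (I verified the cancellation of $c$, using $c\in Z^{\sigma_-}$ and $c^2=1$), the base-point issue is handled correctly by passing to the conjugation quotient, and the independence of the choice of $\delta$ is rightly noted. What your approach buys is a self-contained, purely group-theoretic proof that avoids transporting the statement across the homeomorphism of Theorem \ref{ths}; what it costs is that the one genuinely convention-dependent step --- which involution $\tau\in\{\sigma_-,\sigma_+\}$ twists $\widehat G$ for each choice of sign $\pm$ --- is exactly the content packaged in the cited BGP theorems, so your argument still implicitly leans on that dictionary (you flag this honestly). Note also that the paper's displayed group law on $\widehat G$ omits the twisting involution on $g_2$; you correctly read the intended law, which is what makes your expansion of the conjugation work.
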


\proof
It is a consequence of Theorem \ref{th puntos fijos} and \cite[Theorems 4.5 and 4.8]{BGP}.

\begin{remark}
In \cite{garcia-prada&wilkin}, a similar description is worked out for the holomorphic case, defining $(G, \alpha_+, 
\sigma_+, \pm, z)$--Higgs bundles and their moduli spaces $\Mm(G, \alpha_+, \sigma_+, \pm, z)$, as well as the 
associated moduli spaces of representations $\Rr(G, \alpha_+, \sigma_+, \pm, z)$.
\end{remark}

\section{Elliptic curves and Higgs bundles}
\label{sc HB on EC}

\subsection{Elliptic curves}\label{sc elliptic curves}

Let $X$ be a compact Riemann surface of genus $1$, and let $x_0$ be a distinguished 
point on it; the pair $(X,\,x_0)$ defines an {\it elliptic curve} (a connected
compact complex Lie group of dimension one). However, by abuse of 
notation, we usually denote the elliptic curve simply by $X$.
Every elliptic curve is isomorphic to some $X_\gamma \,=\, \CC/ \langle 1,\, \gamma \rangle_\ZZ$, where $\langle 1,\, \gamma \rangle_\ZZ$ is the lattice generated by $1$ and $\gamma\,\in\, \HH$, the subset of points of the complex plane $\CC$ with
positive imaginary part. One has $X_{\gamma_1} \cong X_{\gamma_2}$ if and only if
$\gamma_1, \gamma_2 \in \HH$ are related by a M\"obius transformation given by
an element of $\text{SL}(2, {\mathbb Z})$.

The Picard group $\Pic^{0}(X)$ will be denoted by $\widehat{X}$. Let 
\begin{equation} \label{eq definition of p_x0} p_{x_0} \,:\, X\, 
\stackrel{\cong}{\longrightarrow}\, \widehat{X} \end{equation} be the holomorphic 
isomorphism that sends any $x\,\in\, X$ to the holomorphic line bundle ${\mathcal 
O}_X(x-x_0)$. Note that the group structure on $\widehat{X}$ produces a group 
structure on $T^*\widehat{X}$. Recall that $T^*\widehat{X} \,\cong \,\widehat{X} 
\times H^0(X,\, \Oo_X)$; the group structure on the second factor is simply given 
by the linear structure. The subgroup of $X$ defined by the points of order two 
will be denoted by $X[2]$.

The Abel-Jacobi morphism 
\[
\aj_1 \,:\, X \,\longrightarrow\, \Pic^1(X)\, , \ \ x \, \longmapsto\, {\mathcal O}_X(x)
\]
is an isomorphism. For every $y \,\in\, X$, let
\[
t_y\, :\, X \,\longrightarrow\, X\, , \ \ x\, \longmapsto\,x+y
\]
be the translation automorphism.

To present a consistent notation with that of Section \ref{sc real EC}, we write 
$\alpha_{(+, 1)}$ for the identity $\id_X$ and $\alpha_{(+,-1)}$ for
the inversion map $x\, \longmapsto\, -x$ of $X$. When they are simultaneously
referred, we shall write $\alpha_{(+, a)}$. Composing $\alpha_{(+,a)}$ with the 
translations $t_y$ one gets another involution provided $y\,=\, - \alpha_{(+,a)}(y)$. 
For an involution $\alpha$ of $X$, let
$X^\alpha\, \subset \, X$ be the fixed point locus and $X/\alpha$ the quotient by
$\alpha$. Table \ref{tb holomorphic involutions} describes the holomorphic involutions on $X$.

For $\gamma\,\in\,\mathbb H$, consider the paths $\widetilde{\delta}_1,\, \widetilde{\delta}_2 \,:\, [0,\,1]
\,\longrightarrow\, \CC$, where
\begin{equation} \label{eq definition of delta_1}
\widetilde{\delta}_1 (t) \,=\, t
\end{equation}
and
\begin{equation} \label{eq definition of delta_2}
\widetilde{\delta}_2 (t) = \gamma t\, .
\end{equation}
Let $\delta_1, \,\delta_2$ be the images of
$\widetilde{\delta}_1,\, \widetilde{\delta}_2$ in $X_\gamma \,=\, \CC/ \langle 1,\, \gamma
\rangle_\ZZ$. The homotopy classes of $\delta_1$ and $\delta_2$ generate the
fundamental group $\pi_1(X_\gamma,\, 0)$. The involutions $\alpha_{(+,a)}$ induce
an action on the fundamental group; this action is described in in Table \ref{tb action of alpha on pi_1 hol}.

\begin{remark} \label{rm homogenicity_+}
Every semistable principal $G$-bundle $E\,\longrightarrow\, X$ of trivial topological
type is homogeneous, meaning
\[
t_y^* E \,\cong\, E
\]
for every $y \,\in\, X$ \cite[Theorem 4.1]{BG}. This implies that the pull-back by
$\alpha_{(+, a)}$ coincides with the pull-back by $t_y \circ \alpha_{(+, a)}$;
therefore, both involutions define the same involution on the moduli space of
$G$--bundles of trivial topological type, meaning
\[
I(t_y \circ \alpha_{(+,a)}, \sigma_+, \pm) = I(\alpha_{(+,a)}, \sigma_+, \pm). 
\]
Since the fundamental group of $X$ is abelian,
$\pi_1(X, x_0)$ is identified with $\pi_1(X, y)$ (in general they are identified
uniquely up to an inner automorphism); with this identification,
the action of $(t_y)_*$ on $\pi_1(X)$ is trivial. Therefore,
\[
J(t_y \circ \alpha_{(+,a)}, \sigma_+, \pm) = J(\alpha_{(+,a)}, \sigma_+, \pm). 
\]
\end{remark}

\subsection{Real elliptic curves}\label{sc real EC}

A {\it real elliptic curve} is a triple of the form $(X,\,\alpha_-,\,x_{0})$, 
where $(X,\,x_0)$ is an elliptic curve, and $\alpha_-$ is an anti-holomorphic 
involution of $X$. It should be emphasized that the point $x_0$ need not be fixed by
$\alpha_-$. Now, again, by 
abuse of notation, we will denote a real elliptic curve simply by 
$(X,\,\alpha_-)$. A morphism of real elliptic curves is a holomorphic group homomorphism $X 
\,\longrightarrow\, Y$ that intertwines the involutions.

A Klein surface is a pair consisting on compact Riemann surface and an anti-holomorphic involution. The topological classification of Klein surfaces was given by Klein \cite{K}. In the particular case of real elliptic curves, the topological type of a real elliptic curve is given 
by a pair $(n,b)$ where $n$ is the number of connected components of the fix point locus $\alpha_-$, and $b\,\in\, 
\mathbb{Z}/2\mathbb{Z}$ is the index of orientability defined by
\begin{equation*}
b=
 \begin{cases}
    0&  \text{if $X/\alpha_-$ is oriented }\\
    1&  \text{if $X/\alpha_-$ is not oriented}.
  \end{cases}
\end{equation*}
A theorem of Harnack says that $n \, \leq\, \text{genus}(X)+1$ \cite{Ha}.

There are three different topological types of real elliptic curves: a Klein bottle, a 
M\"obius strip and a closed annulus; see Table \ref{zzt}.

Consider the elliptic curve $X_\gamma \,=\, \CC/\langle 1, \gamma \rangle_\ZZ$ for
some $\gamma \,\in\, \HH$, and let $\alpha_-$ be an anti-holomorphic involution on it.
Fix a lift
$$\widetilde{\alpha}_-\,:\,\mathbb{C}\,\longrightarrow \,\mathbb{C}\, ,\ \
z\, \longmapsto\, a\overline{z}+b$$ of
$\alpha_-$, where $a,\, b$ are complex numbers. Table \ref{tb region of partial Omega}
gives the possible values of $\gamma$ up to M\"obius transformations.

Continuing with the notation of Section \ref{sc elliptic curves}, an
anti-holomorphic involution on $X\,=\, X_\gamma$ that fixes the identity element and
lifts to the automorphism $z\,\longmapsto\, a \overline{z}$ of $\CC$, where
$a \,\in\, \CC^*\,=\, \CC\setminus\{0\}$, will be denoted by $\alpha_{(-,a)}$. The anti-holomorphic involution $t_y \circ \alpha_{(-,a)}$ lifts to $z\,\longmapsto\,
a\overline{z} + b$, where $y \,\in\, X$ is the projection of $b$ to
$X_\gamma$. Consider the two involutions $\alpha_{(\epsilon_1, a_1)}$ and
$\alpha_{(\epsilon_2, a_2)}$; note that
\[
\alpha_{(\epsilon_1, a_1)} \circ \alpha_{(\epsilon_2, a_2)} \,=\, \alpha_{(\epsilon_1\epsilon_2, a_1a_2)}
\]
if $\epsilon_1 \,=\, +$, and 
\[
\alpha_{(\epsilon_1, a_1)} \circ \alpha_{(\epsilon_2, a_2)} \,=\, \alpha_{(\epsilon_1\epsilon_2, a_1\overline{a}_2)}
\]
when $\epsilon_1 \,=\, -$.

All possible anti-holomorphic involutions are contained 
in Table \ref{tb all the involutions} (see \cite[Section 9]{alling&greenleaf}).

Alling and Greenleef gave the following classification of real tori.

\begin{proposition}[{\cite[Section 9]{alling&greenleaf}}]
In the cases in Table \ref{tb all the involutions}, for any possible $y \,\in\, X$,
the isomorphisms of real tori are as follows:
\begin{itemize}
\item In the region $A$, for any $y \,\in\, X^{\alpha_{(-, - 1)}}$ such
that $y \neq x_0$,
\[
\left( X_\gamma, \,t_y \circ \alpha_{(-,1)} \right) \,\cong\,
\left( X_\gamma, \,t_{\frac{1}{2}} \circ \alpha_{(-,1)} \right)
\]
and, for any $y \,\in\, X^{\alpha_{(-, 1)}}$ with $y \neq x_0$,
\[
\left( X_\gamma, \,t_y \circ \alpha_{(-,-1)} \right) \,\cong\, \left( X_\gamma,\, t_{\frac{\gamma}{2}} \circ \alpha_{(-,1)} \right).
\]

\item In the region $B$, 
\[
\left( X_\gamma, \,\alpha_{(-,1)} \right) \,\cong\, \left( X_\gamma,\, \alpha_{(-,-1)} \right),
\]
\[
\left( X_\gamma, \,\alpha_{(-,\i)} \right) \,\cong\, \left( X_\gamma,\, \alpha_{(-,-\i)} \right)
\]
and for every $y \in X^{\alpha_{(-, \mp 1)}}$ such that $y \neq x_0$,
\[
\left( X_\gamma, \,t_y \circ \alpha_{(-,1)} \right) \,\cong\, \left( X_\gamma,\, t_{\frac{1}{2}} \circ \alpha_{(-,1)} \right).
\]

\item In the regions $C$, $D$ and $E$, for all $y \in X^{\alpha_{(-, \mp a)}}$,
\[
\left( X_\gamma, \,t_y \circ \alpha_{(-,a)} \right) \,\cong \,\left( X_\gamma,\, \alpha_{(-,a)} \right).
\]

\item In the region $D$,
\[
\left( X_\gamma, \,\alpha_{(-,\pm 1)} \right) \,\cong\, \left( X_\gamma, \,\alpha_{(-,\mp \gamma)} \right) \cong \left( X_\gamma, \alpha_{(-,\pm \gamma^2)} \right).
\]
\end{itemize}
\end{proposition}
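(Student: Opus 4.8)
The strategy is to reduce everything to elementary computations on the universal cover $\CC$. Write $\Lambda_\gamma:=\langle 1,\gamma\rangle_\ZZ$, so $X_\gamma=\CC/\Lambda_\gamma$. Every morphism of complex tori $X_{\gamma_1}\to X_{\gamma_2}$ is induced by an affine map $\widetilde\phi\colon\CC\to\CC$, $z\longmapsto\lambda z+\mu$, with $\lambda\,\Lambda_{\gamma_1}\subseteq\Lambda_{\gamma_2}$, and $\widetilde\phi$ descends to an isomorphism precisely when $\lambda\,\Lambda_{\gamma_1}=\Lambda_{\gamma_2}$; conversely every such affine map descends to a morphism. (Here an isomorphism of real tori means, following \cite{alling&greenleaf}, a biholomorphism intertwining the two real structures, with no base point imposed; it may always be taken holomorphic, since composing an anti-holomorphic intertwiner with the source involution produces a holomorphic one.) Thus, to prove an isomorphism $(X_{\gamma_1},\tau_1)\cong(X_{\gamma_2},\tau_2)$ it suffices to exhibit $\lambda\in\CC^\ast$ and $\mu\in\CC$ with $\lambda\,\Lambda_{\gamma_1}=\Lambda_{\gamma_2}$ such that the induced map intertwines chosen lifts $\widetilde\tau_1,\widetilde\tau_2$ of the involutions modulo $\Lambda_{\gamma_2}$.

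I would first record the shape of this intertwining condition, once and for all. Every involution in Table~\ref{tb all the involutions} has a lift $\widetilde\tau_i(z)=a_i\bar z+b_i$ with $|a_i|=1$, and $b_i$ a lift of the translation part (so $b_i=0$ when $\tau_i=\alpha_{(-,a_i)}$). Substituting $\widetilde\phi(z)=\lambda z+\mu$ into $\widetilde\phi\circ\widetilde\tau_1\equiv\widetilde\tau_2\circ\widetilde\phi\pmod{\Lambda_{\gamma_2}}$ and separating the $\bar z$-linear part from the constant part gives
\[
\lambda\,a_1\;=\;a_2\,\bar\lambda,
\qquad
\lambda\,b_1+\mu\;\equiv\;a_2\,\bar\mu+b_2\pmod{\Lambda_{\gamma_2}}.
\]
The first equation confines $\lambda$ to the intersection of the unit group $\{\lambda:\lambda\,\Lambda_{\gamma_2}=\Lambda_{\gamma_2}\}$ with the locus $\lambda/\bar\lambda=a_2/a_1$; with $\lambda$ chosen, the second is an affine-linear equation for $\mu$ over $\CC$ modulo $\Lambda_{\gamma_2}$ whose solvability is a one-line congruence check, and the freedom in $\mu$ is precisely what absorbs translations $t_y$. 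I would also note the identity $(t_y\circ\alpha_{(-,a)})^2=t_{\,y+\alpha_{(-,a)}(y)}$, from which $t_y\circ\alpha_{(-,a)}$ is an involution if and only if $y\in X^{\alpha_{(-,-a)}}$; thus the admissible $y$ sweep out the one or two circles forming $X^{\alpha_{(-,-a)}}$, and the congruence check exhibits the normal forms $y=\tfrac12$, $y=\tfrac\gamma2$, etc., as their representatives.

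With this in place the proof is a finite, region-by-region verification, using the explicit data of Tables~\ref{tb region of partial Omega} and~\ref{tb all the involutions} and the composition laws $\alpha_{(\epsilon_1,a_1)}\circ\alpha_{(\epsilon_2,a_2)}=\alpha_{(\epsilon_1\epsilon_2,\,a_1a_2)}$ ($\epsilon_1=+$) and $\alpha_{(\epsilon_1,a_1)}\circ\alpha_{(\epsilon_2,a_2)}=\alpha_{(\epsilon_1\epsilon_2,\,a_1\bar a_2)}$ ($\epsilon_1=-$). When the lattice has unit group $\{\pm1\}$, $\lambda=\pm1$ is forced, the linear equation is immediate, and the content is the solvability of $\mu-a\bar\mu\equiv-b_1\pmod{\Lambda_\gamma}$ for $b_1$ a lift of a point of $X^{\alpha_{(-,-a)}}$, read off directly from the lattice; this disposes of the isomorphisms $(X_\gamma,t_y\circ\alpha_{(-,a)})\cong(X_\gamma,\alpha_{(-,a)})$ of regions $C$, $D$, $E$ and of the translation statements of regions $A$ and $B$. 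The remaining identifications exploit the extra symmetry of the two special lattices. For the square lattice $\Lambda_{\i}$ (region $B$) the units $\pm\i$ realize $\lambda/\bar\lambda=-1$, so conjugation by $z\mapsto\i z$ sends $\alpha_{(-,1)}$ to $\alpha_{(-,-1)}$ and $\alpha_{(-,\i)}$ to $\alpha_{(-,-\i)}$. For the hexagonal lattice (region $D$), with $\gamma$ a primitive sixth root of unity ($\gamma^2=\gamma-1\in\Lambda_\gamma$, $\gamma^3=-1$), the unit group is the group $\langle\gamma\rangle$ of sixth roots of unity, so $\lambda/\bar\lambda=\lambda^2$; conjugation by $z\mapsto\gamma z$, respectively $z\mapsto\gamma^2 z$, sends $\alpha_{(-,1)}$ (lift $z\mapsto\bar z$) to the involutions with lifts $z\mapsto\gamma^2\bar z$ and $z\mapsto\gamma^4\bar z=-\gamma\bar z$, i.e.\ to $\alpha_{(-,\gamma^2)}$ and $\alpha_{(-,-\gamma)}$, and the parallel computation from $\alpha_{(-,-1)}$ completes the chain $(X_\gamma,\alpha_{(-,\pm1)})\cong(X_\gamma,\alpha_{(-,\mp\gamma)})\cong(X_\gamma,\alpha_{(-,\pm\gamma^2)})$.

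The obstacle is organizational rather than conceptual. There is no single uniform argument: one must run through all the regions, both signs of $\epsilon$, and each branch of the fixed loci $X^{\alpha_{(-,-a)}}$, keeping careful track of which lift $b$ of $y$ is used — solvability of the constant congruence depends on $b$ only through its class modulo $\Lambda_\gamma$, while the normal forms $\tfrac12$ and $\tfrac\gamma2$ single out particular components of $X^{\alpha_{(-,-a)}}$, so the matching of a general $y$ with its normal form must be done componentwise. A further point needing care is that in regions $B$ and $D$ the enlarged unit group alters the linear part of the intertwining condition, and the arithmetic of $\ZZ[\i]$, respectively $\ZZ[\gamma]$, has to be used to produce exactly the stated isomorphisms and no others.
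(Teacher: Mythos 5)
The paper does not prove this proposition at all --- it is imported verbatim from Alling--Greenleaf \cite[Section 9]{alling&greenleaf} --- so there is no internal argument to compare against; your task was effectively to reconstruct the classical computation, and the framework you set up is the right one. Lifting to $\CC$, writing a candidate isomorphism as $z\mapsto\lambda z+\mu$ with $\lambda\langle 1,\gamma\rangle_\ZZ=\langle 1,\gamma\rangle_\ZZ$, splitting the intertwining identity into the linear condition $\lambda a_1=a_2\bar\lambda$ and the constant congruence, and then letting the unit group ($\{\pm 1\}$ generically, the fourth and sixth roots of unity on the square and hexagonal lattices) govern which $a_1,a_2$ can be matched is exactly how this classification is obtained. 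The individual computations you do record are correct: the criterion $y\in X^{\alpha_{(-,-a)}}$ for $t_y\circ\alpha_{(-,a)}$ to be an involution, the replacement of an anti-holomorphic intertwiner by the holomorphic one $\phi\circ\tau_1$, and the identities $\lambda/\bar\lambda=-1$ for $\lambda=\sqrt{-1}$ and $\lambda/\bar\lambda=\lambda^2$ on the hexagonal lattice, which yield the region $B$ and region $D$ chains as stated.

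Two points, however, need more than the ``organizational care'' you defer them to, and your own linear/constant dichotomy is precisely what exposes them; a finished write-up must confront both. First, the constant congruence $\mu-a\bar\mu\equiv-\lambda b_1$ is solvable with $b_2=0$ exactly when $y$ lies in the image of $x\mapsto\alpha_{(-,a)}(x)-x$, which is the \emph{identity component} of $X^{\alpha_{(-,-a)}}$. Hence in region $A$, for $y\neq x_0$ on the component of $X^{\alpha_{(-,-1)}}$ through $x_0$ (e.g.\ $y=\gamma/2$), one gets $(X_\gamma,t_y\circ\alpha_{(-,1)})\cong(X_\gamma,\alpha_{(-,1)})$, of topological type $(2,0)$, and \emph{not} $(X_\gamma,t_{1/2}\circ\alpha_{(-,1)})$, which is of type $(0,1)$. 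The hypothesis ``$y\neq x_0$'' must therefore be strengthened to ``$y$ in the component not containing $x_0$''; your sketch gestures at the componentwise matching but then asserts the congruence check ``disposes of'' the region $A$ and $B$ translation statements, which is not true for the statement as literally printed. Second, the displayed isomorphism $(X_\gamma,t_y\circ\alpha_{(-,-1)})\cong(X_\gamma,t_{\gamma/2}\circ\alpha_{(-,1)})$ in region $A$ cannot hold: the linear condition forces $\lambda=-\bar\lambda$, and the unit group of a generic rectangular lattice is $\{\pm1\}$, so no such $\lambda$ exists (indeed the two sides have different topological types, $(0,1)$ versus $(2,0)$). The intended target is evidently $t_{\gamma/2}\circ\alpha_{(-,-1)}$, for which your congruence check does go through. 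Neither issue invalidates your method --- on the contrary, the method detects them --- but as written your proposal would claim to prove an isomorphism that is false, so you should state the corrected hypotheses explicitly before running the region-by-region verification.
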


Recall that $\delta_1, \delta_2$, defined in \eqref{eq definition of delta_1} and 
\eqref{eq definition of delta_2} respectively, are generators of the fundamental group 
$\pi_1(X)$. Table \ref{tb action of alpha on pi_1 antihol} describes the action of $\alpha_{(\epsilon,a)}$
on $\pi_1(X)$.
 
\begin{remark} \label{rm homogenicity_-}
As in Remark \ref{rm homogenicity_+}, the homogeneity of topologically trivial semistable principal $G$-bundles on
elliptic curves implies that the pullbacks by $\alpha_{(-, a)}$ and
$t_y \circ \alpha_{(-, a)}$ of such a bundle are isomorphic. We also have the identifications 
\[
I(t_y \circ \alpha_{(-,a)}, \,\sigma_-, \pm) \,=\, I(\alpha_{(-,a)}, \,\sigma_-, \pm) 
\]
and
\[
J(t_y \circ \alpha_{(-,a)}, \,\sigma_-, \pm) \,=\, J(\alpha_{(-,a)},\, \sigma_-, \pm) 
\]
in the anti-holomorphic case. The notation used here is guided by these identities.
\end{remark}

\subsection{Higgs bundles over elliptic curves}\label{sc higgs bundles on elliptic curves}

As above, let $G$ be a connected complex reductive affine algebraic group. Let $T\, \subset\, G$ be
a Cartan 
subgroup, and let $\Lambda_{T} \,:=\, \Hom(\CC^*,\, T)$ be the corresponding cocharacter 
lattice. We denote by $Z(G)$ the center of $G$, and by $Z$ the connected component of it containing 
the identity element. Fix, once and for all, a basis
$$\{ \lambda_1, \cdots, \lambda_\ell, \lambda_{\ell + 1}, \cdots, \lambda_s \}$$
of $\Lambda_T$ such that $\{ \lambda_1, \cdots,\lambda_\ell \} \oplus
\{ \lambda_{\ell + 1}, \cdots, \lambda_s \}$ is an orthogonal decomposition of it,
with $\{\lambda_1, \cdots, \lambda_\ell \}$ being an orthogonal basis of
$\Lambda_Z \,:= \,\Hom(\CC^*,\, Z)$. We consider the natural isomorphism
\[
\eta\, :\, \CC^*\otimes_{\ZZ} \Lambda_{T}\, \stackrel{\cong}{\longrightarrow}\,
T\, ,\ \ \sum z_i \otimes_{\ZZ} \lambda_i\,\longmapsto\, \Pi \lambda_i(z_i)\, .
\]

Let $\mu \,:\, T \times T \,\longrightarrow\, T$ be the multiplication map of the group
$T$. For any two principal $T$-bundles $E$ and $E'$ of trivial topological type,
consider the principal $(T \times T)$-bundle $E \times_X E'$, and define
\[
E \otimes E' \,:=\, \mu_*(E \times_X E')\, ,
\]
which is again a principal $T$-bundle of trivial topological type. Note that
$E(\tT) \,\cong\, \tT \otimes \Oo_X$, so Higgs fields on
a principal $T$-bundle are elements of $\tT$. One can express
the extension of structure groups associated to $\eta$ as follows
\[
\eta_*\, :\, T^*\widehat{X} \otimes_\ZZ \Lambda_T
\, \stackrel{\cong}{\longrightarrow}\,\Mm(T)\, , \ \
\sum (L_i, \psi_i) \otimes \lambda_i\,\longmapsto\,\left( \bigotimes(\lambda_i)_*L_i \thinspace,
\thinspace \sum (d \lambda_i)_* \psi_i \right)\, .
\]
Consider also the extension of structure group associated to the injection
$T \,\hookrightarrow\, G$, and compose it with $\eta_*$, to obtain the following map:
\[
\dot{\xi} \,:\, T^*\widehat{X} \otimes_\ZZ \Lambda_T \,\longrightarrow\, \Mm(G)\, .
\]
Analogously, one can define 
\[
\dot{\zeta}\, :\,\Hom(\pi_1(X),\CC^*) \otimes_\ZZ \Lambda_T\,\longrightarrow\,
\Rr(G)\, ,\ \ \sum_i \rho_i \otimes \lambda_i \,\longmapsto\,\prod_i
(\lambda_i \circ \rho_i)\, .
\] 

The action of the Weyl group $W$ associated to $(T,\, G)$ on $\Lambda_T$ can be extended
to $A \otimes_\ZZ \Lambda_T$, where $A$ is any abelian group, as follows:
\[
\omega \cdot \left( \sum a_i \otimes \lambda_i \right) \,=\,
\sum a_i \otimes \omega(\lambda_i)\, , \ \ \omega\, \in\, W\, ,\ \ a_i\,\in\, A\, .
\]
If $A$ is the multiplicative group $\CC^*\,=\, \CC \setminus\{0\}$, this action commutes with the natural
action of $W$ on $T$,
\begin{equation} \label{eq W commutes with Gamma}
\xymatrix{
\CC^* \otimes_\ZZ \Lambda_T \ar[rr]^{\quad \eta} \ar[d]_{\omega \cdot} & & T \ar[d]^{\omega \cdot}
\\
\CC^* \otimes_\ZZ \Lambda_T \ar[rr]^{\quad \eta} & & T.
}
\end{equation}
The commutativity of \eqref{eq W commutes with Gamma} implies that $\dot{\xi}$ and
$\dot{\zeta}$ factor through the quotient by the action of $W$.

\begin{theorem}[{\cite{FGN}}] \label{tm HB on EC}
The morphism $\dot{\xi}$ and $\dot{\zeta}$ induce isomorphisms
\[
\xi \,:\, (T^* \widehat{X} \otimes_\ZZ \Lambda_T)/W \,\stackrel{\cong}{\longrightarrow}\,
\Mm(G)
\]
and 
\[
\zeta \,:\, (\Hom(\pi_1(X),\CC^*) \otimes_\ZZ \Lambda_T)/W\,
\stackrel{\cong}{\longrightarrow}\, \Rr(G).
\]

Furthermore, the diffeomorphism between $T^*\widehat{X}$ and $\Hom(\pi_1(X),\CC^*)$ induced by
the Hodge theory induces the top row map of the commuting diagram
\begin{equation} \label{eq HK commutes with xi and zeta}
\xymatrix{
T^*\widehat{X} \otimes_\ZZ \Lambda_T \ar[rr]^{\rm diffeo. \quad \quad} \ar[d]_{\dot{\xi}} & & \Hom(\pi_1(X),\CC^*) \otimes_\ZZ \Lambda_T \ar[d]^{\dot{\zeta}}
\\
\Mm(G) \ar[rr]^{\rm diffeo.} & & \Rr(G)
}
\end{equation}
\end{theorem}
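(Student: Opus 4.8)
\emph{Strategy.} After the identifications $\eta$, $\eta_*$ the maps $\dot\xi$ and $\dot\zeta$ are precisely the maps ``extend the structure group along $T\hookrightarrow G$'', and the excerpt already records that they factor through the $W$-action. So the plan is: (i) check that $\dot\xi$ is surjective onto $\Mm(G)$ and $\dot\zeta$ onto $\Rr(G)$ --- i.e.\ that every polystable object of trivial topological type reduces to the Cartan subgroup $T$; (ii) check that the fibres of $\dot\xi$, $\dot\zeta$ are exactly the $W$-orbits; (iii) conclude that the induced $\xi$, $\zeta$ are bijective morphisms, hence isomorphisms; (iv) deduce the commuting square \eqref{eq HK commutes with xi and zeta} from the naturality of the non-abelian Hodge correspondence (Theorem \ref{ths}) under $T\hookrightarrow G$, the correspondence for a torus being classical Hodge theory.

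\emph{Surjectivity (reduction to $T$).} On the Dolbeault side, the underlying bundle of a polystable $G$-Higgs bundle of trivial topological type is semistable and homogeneous (\cite[Theorem 4.1]{BG}; cf.\ Remark \ref{rm homogenicity_+}), hence reduces to $T$ by the structure theory of \cite{atiyah} for $\GL(n,\CC)$ and \cite{laszlo, friedman&morgan&witten} in general; polystability of the pair then places the Higgs field in $E_T(\tT)\cong\tT\otimes\Oo_X$ for an appropriate such reduction $E_T$, so $(E,\varphi)$ is in the image of $\dot\xi$. On the Betti side, a reductive representation $\rho\colon\pi_1(X)\cong\ZZ^2\to G$ has commuting semisimple generators $\rho(\delta_1),\rho(\delta_2)$; in the component of trivial topological type such a commuting pair is conjugate into $T$, so $\rho$ factors through $T$ and lies in the image of $\dot\zeta$. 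This last point is where the restriction to trivial topological type is needed: in general there are non-toral reductive pairs, already for $\mathrm{PGL}(2,\CC)$, but they carry a non-trivial topological invariant.

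\emph{Fibres.} If $\rho,\rho'\colon\pi_1(X)\to T$ satisfy $\rho'=\Ad_g\circ\rho$, put $A'=\rho'(\pi_1(X))\subset T$; then $T$ and $gTg^{-1}$ are both maximal tori of the centraliser $Z_G(A')$, which is reductive because $A'$ consists of semisimple elements, so some $h\in Z_G(A')$ carries $gTg^{-1}$ to $T$, and $n:=hg\in N_G(T)$ satisfies $\Ad_n\circ\rho=\Ad_h\circ\rho'=\rho'$ (as $h$ centralises $A'$); thus $\rho$ and $\rho'$ differ by $nT\in W$. The same reasoning on the Dolbeault side --- two $T$-reductions of a fixed polystable $G$-Higgs bundle give maximal tori of its automorphism group, conjugate inside the reductive centraliser --- shows the fibres of $\dot\xi$ are the $W$-orbits. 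Hence $\xi$ and $\zeta$ are bijective; since source and target are the relevant moduli varieties and $W$ is finite, one then verifies that they are isomorphisms, and $\eta_*$ delivers the stated descriptions $(T^*\widehat X\otimes_\ZZ\Lambda_T)/W\cong\Mm(G)$ and $(\Hom(\pi_1(X),\CC^*)\otimes_\ZZ\Lambda_T)/W\cong\Rr(G)$.

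\emph{The square, and the hard part.} For a torus the homeomorphism of Theorem \ref{ths} is the classical identification of $T^*\Pic^0(X)$ with $\Hom(\pi_1(X),\CC^*)$ --- send a degree-zero Higgs line bundle to the monodromy of the flat connection given by its harmonic metric --- and tensoring with $\Lambda_T$ gives the top arrow of \eqref{eq HK commutes with xi and zeta}; since the non-abelian Hodge correspondence is natural for $T\hookrightarrow G$ it is compatible with $\dot\xi$ and $\dot\zeta$, and passing to $W$-quotients (compatible by step (ii)) yields commutativity. I expect the main obstacle to be exactly steps (i)--(ii): showing that an arbitrary polystable object reduces to $T$ and that the reduction is unique up to $W$. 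This is where the special features of an elliptic curve ($K\cong\Oo_X$, Atiyah's explicit classification, homogeneity of topologically trivial semistable bundles) --- or, in the representation-variety picture, the structure of commuting pairs in $G$ --- are indispensable, and where the hypothesis of trivial topological type cannot be dropped.
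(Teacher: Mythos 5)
The paper itself does not prove Theorem \ref{tm HB on EC}: it is imported verbatim from \cite{FGN}, so there is no in-paper argument to compare against. Your outline reproduces the strategy of \cite{FGN} (and of Laszlo and Friedman--Morgan--Witten for the underlying bundle): reduction of topologically trivial polystable objects to $T$, identification of the fibres with $W$-orbits via conjugacy of maximal tori in the reductive centralizer, and abelian Hodge theory for $T$ together with naturality under $T\hookrightarrow G$ for the commuting square; your Betti-side fibre argument is correct as written. The two places where the sketch is thinnest are (a) the step ``polystability of the pair then places the Higgs field in $\tT\otimes\Oo_X$'': since $\ad E$ decomposes as $\tT\otimes\Oo_X$ plus degree-zero line bundles indexed by the roots, some of which may be trivial, one must actually argue that a polystable $\varphi$ can be conjugated into the Cartan part compatibly with the chosen $T$-reduction --- this is the genuinely new content of \cite{FGN} beyond the bundle case and deserves more than one clause; and (b) ``bijective morphism of varieties $\Rightarrow$ isomorphism'' does not follow from finiteness of $W$ alone --- one needs normality of $\Mm(G)$ and $\Rr(G)$ (or an explicitly constructed inverse), which is how \cite{FGN} closes the argument.
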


\section{Description of the involutions of the moduli space}
\label{sc description of involutions}

\subsection{The rank $1$ case}
\label{sc rank 1}

The identity map of the multiplicative group $\CC^*$
is associated to the anti-holomorphic involution of the compact real form $\U(1)$
\begin{equation} \label{eq definition of sigma_U}
\sigma_{\U(1)}\, :\, \CC^*\,\longrightarrow\,\CC^*\, , \ \ 
z\,\longmapsto\, \overline{z}^{-1}\, .
\end{equation}

For the group $G \,=\, \CC^*$, set 
\[
i(\alpha_{(+,a)},\, \pm) \,:=\, I(\alpha_{(+,a)},\,
\id_{\CC^*}, \,\pm)
\]
and 
\[
i(\alpha_{(-,a)},\, \pm) \,:=\, I(\alpha_{(-,a)},\, \sigma_{\U(1)},\,
\pm). 
\]
Assume that $\Ff \,=\, (F,\,\phi)$ satisfies the involution condition in
\eqref{eq involution condition imath}. Then we can define the following involutions on $T^*\widehat{X}$
\[
i(\alpha_{(+,a)}, \pm, \Ff)\, :\, T^*\widehat{X}\,\longrightarrow\,
T^*\widehat{X}\, , \ \ (L,\,\psi)\,\longmapsto\, (F \otimes \alpha_{(+,a)}^*L,\,
\phi \pm \alpha_{(+,a)}^*\psi)\, ,
\]
and
\[
i(\alpha_{(-,a)}, \pm, \Ff)\, :\,T^*\widehat{X}\,\longrightarrow\,T^*\widehat{X}
\, , \ \ (L,\,\psi)\,\longmapsto\, (F \otimes \alpha_{(-,a)}^*
\overline{L}^*,\, \phi \mp \alpha_{(-,a)}^*\overline{\psi})\, .
\]
Accordingly, set $j(\alpha_{(+,a)}, \pm) \,=\, J(\alpha_{(+,a)}, \id_{\CC^*}, \pm)$ and $j(\alpha_{(-,a)}, \pm) \,=\, J(\alpha_{(-,a)}, \sigma_{\U(1)}, \pm)$. Simi\-larly, if
$\chi$ satisfies \eqref{eq involution condition jmath}, consider the involutions on $\Hom(\pi_1(X),\,\CC^*)$
\[
j(\alpha_{(\epsilon,a)}, +, \chi)\,:\, \Hom(\pi_1(X),\,\CC^*)\,\longrightarrow\,\Hom(\pi_1(X),\,\CC^*)
\, , \ \ \rho\,\longmapsto\, \chi \left( \cdot \rho
\circ (\alpha_{(\epsilon,a)})_* \right)
\]
and
\[
j(\alpha_{(\epsilon,a)}, -, \chi)\,:\, \Hom(\pi_1(X),\,\CC^*)\,\longrightarrow\,\Hom(\pi_1(X),\,\CC^*)
\, , \ \ \rho\,\longmapsto\, \chi \cdot
( \overline{\rho}^{-1}\circ \left(\alpha_{(\epsilon,a)})_*\right)\, .
\]

Since $T^*\widehat{X} \,\cong\, \widehat{X} \times H^0(X,\, \Oo_X)$, the
isomorphism $p_{x_0}$ in \eqref{eq definition of p_x0} produces an isomorphism
\[
p_{x_0} \,:\, X \times H^0(X, \,\Oo_X) \,\stackrel{\cong}{\longrightarrow}\,
T^*\widehat{X}
\]
(it is a slight abuse of notation to use $p_{x_0}$ for this map).
Given an anti-holomorphic involution $\alpha \,:\, X \,\longrightarrow\, X$, let
\[
\conj\, :\, H^0(X, \,\Oo_X)\,\longrightarrow\,H^0(X, \,\Oo_X)\, , \ \
X\,\longmapsto\, \alpha^*\overline{X}
\]
be the induced conjugate-linear homomorphism.

\begin{lemma} \label{lm description of the i}
Take $\Ff \,=\, (F,\,\phi)$ satisfying \eqref{eq involution condition imath}, and let $y \,\in \,X$ be such that $F = p_{x_0}(y)$. Then,
\[
i(\alpha_{(+,a)}, \pm, \Ff) \,=\, p \circ (t_y \circ \alpha_{(+,a)},\, \pm \id ) \circ p^{-1}\, ,
\]
and
\[
i(\alpha_{(-,a)}, \pm, \Ff)\,=\, p \circ (t_y \circ \alpha_{(-,-a)},\, \mp \conj ) \circ p^{-1}\, .
\]
\end{lemma}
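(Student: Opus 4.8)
The plan is to unwind the definitions of the involutions $i(\alpha_{(\epsilon,a)}, \pm, \Ff)$ in terms of line bundles, translate everything through the isomorphism $p_{x_0} \colon X \times H^0(X,\Oo_X) \to T^*\widehat X$, and match the result with the stated composite. The two cases (holomorphic and anti-holomorphic) are structurally the same, so I would treat the holomorphic case first as a warm-up and then do the anti-holomorphic case, where the conjugations genuinely enter.

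First, for the holomorphic case: by definition $i(\alpha_{(+,a)}, \pm, \Ff)(L,\psi) = (F\otimes \alpha_{(+,a)}^* L, \phi \pm \alpha_{(+,a)}^*\psi)$. The key identity is that pull-back of line bundles under the translation $t_y$ corresponds, via $p_{x_0}$, to tensoring with $F = p_{x_0}(y) = \Oo_X(y - x_0)$: indeed $t_y^* \Oo_X(x - x_0) \cong \Oo_X((x-y) - x_0) \otimes \Oo_X(y-x_0)$ after chasing through the group law on $\widehat X \cong X$, so on the level of the parameter $x \in X$ the map $(L,\psi) \mapsto (F \otimes \alpha_{(+,a)}^* L, \phi + \alpha_{(+,a)}^*\psi)$ becomes $x \mapsto t_y(\alpha_{(+,a)}(x))$ on the base and $\psi \mapsto \pm\alpha_{(+,a)}^*\psi$ on the fibre, which is exactly $p \circ (t_y \circ \alpha_{(+,a)}, \pm\id) \circ p^{-1}$ once one notes that $\phi \in H^0(X,\Oo_X) = \CC$ translates the $H^0$-coordinate and corresponds under $p_{x_0}$ to the Higgs field of $F$. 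I should be careful here that the condition \eqref{eq involution condition imath} on $\Ff$ is what guarantees $\phi + \alpha_{(+,a)}^*\phi = 0$ (equivalently $2\phi$ is killed), so that the affine shift by $\phi$ in the Higgs coordinate is consistent with having an involution; but for the identification of the two maps this just amounts to recording that $(t_y \circ \alpha_{(+,a)})$ is again an involution, which is precisely the translation-by-$b$ discussion of Sections \ref{sc elliptic curves}–\ref{sc real EC}.

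For the anti-holomorphic case, $i(\alpha_{(-,a)}, \pm, \Ff)(L,\psi) = (F \otimes \alpha_{(-,a)}^*\overline L^*, \phi \mp \alpha_{(-,a)}^*\overline\psi)$. The point is to understand the composite map $L \mapsto \alpha_{(-,a)}^* \overline L^*$ on $\widehat X \cong X$. Dualization corresponds to the inversion $x \mapsto -x$ on $X$; complex conjugation of line bundles over an anti-holomorphically-acted curve corresponds, under $p_{x_0}$, to the anti-holomorphic involution $\alpha_{(-,1)}$ precomposed appropriately; and $\alpha_{(-,a)}$ contributes its own base action. Combining inversion with $\alpha_{(-,a)}$ and the conjugation involution and using the composition rules $\alpha_{(\epsilon_1,a_1)}\circ\alpha_{(\epsilon_2,a_2)} = \alpha_{(\epsilon_1\epsilon_2, a_1\overline a_2)}$ for $\epsilon_1 = -$ (displayed just before Table \ref{tb all the involutions}), the three anti-holomorphic/holomorphic pieces collapse to the single involution $t_y \circ \alpha_{(-,-a)}$ on the base — the sign flip in the subscript $a \mapsto -a$ is exactly the shadow of composing with the inversion. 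On the Higgs coordinate, $\psi \mapsto \mp \alpha_{(-,a)}^*\overline\psi$ becomes $\mp\conj$ where $\conj$ is taken with respect to the relevant anti-holomorphic involution; tracking which involution that is (it is the one induced by $t_y \circ \alpha_{(-,-a)}$, but by homogeneity, cf. Remark \ref{rm homogenicity_-}, it does not matter which translate we use) gives the stated formula.

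The main obstacle I anticipate is purely bookkeeping: keeping straight the interplay of the three operations — pull-back by $\alpha_{(-,a)}$, dualization, and complex conjugation of line bundles — and verifying that their composite on $\widehat X$ is implemented on $X \cong \widehat X$ by $t_y \circ \alpha_{(-,-a)}$ with the correct sign on $a$ and the correct translation parameter $y$ (coming from $F = p_{x_0}(y)$ together with the square-condition \eqref{eq involution condition imath}). There is no conceptual difficulty beyond this; the isomorphism $p_{x_0}$ is a group isomorphism intertwining $t_y$ on $X$ with tensoring by $p_{x_0}(y)$ on $\widehat X$, and the Higgs-field component is linear and transforms in the obvious way. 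Once the base-level computation is pinned down, the two displayed equalities follow by comparing components.
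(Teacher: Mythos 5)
Your proposal is correct and follows essentially the same route as the paper: the paper's entire proof consists of declaring the holomorphic case straightforward and recording the divisor computation $\alpha_{(-,a)}^{*}\sigma_{\U(1)}\bigl(\Oo_X(x-x_0)\bigr) = \alpha_{(-,a)}^{*}\overline{\Oo_X(x-x_0)}^{*} = \Oo_X(-\alpha_{(-,a)}(x)-x_0) = \Oo_X(\alpha_{(-,-a)}(x)-x_0)$, which is exactly the ``inversion composed with $\alpha_{(-,a)}$ yields $\alpha_{(-,-a)}$'' bookkeeping you describe, with tensoring by $F = p_{x_0}(y)$ supplying the translation $t_y$. Your decomposition into the three separate operations (pullback, conjugation, dualization) is slightly looser than the paper's single divisor calculation, but it is the same argument.
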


\begin{proof}
The first statement is straight-forward. The lemma follows from the fact that for all $x\,\in\, X_{\gamma}$,
\begin{align*}
\alpha_{(-,a)}^{*}\sigma_{\U(1)} \left ( \Oo_X(x-x_0) \right) \,=\, & \alpha_{(-,a)}^{*}\overline{\Oo(x-x_0)}^* 
\\
=\, & \Oo_X(-\alpha_{(-,a)}(x) - x_0) 
\\
= \,& \Oo_X(\alpha_{(-,-a)}(x) - x_0)\, .
\end{align*}
\end{proof}

Recall the generators $\delta_1, \delta_2$ of $\pi_1(X)$ defined in \eqref{eq definition of delta_1} and \eqref{eq definition of delta_2}. One has the isomorphism
\[
q\,:\, \Hom(\pi_1(X),\, \CC^*)\,\longrightarrow\, \CC^* \times \CC^*\, ,\ \
\rho\, \longmapsto\, \left( \rho(\delta_1),
\rho(\delta_2) \right)\, .
\]
Note that the involution $j(\alpha_{(-,a)}, \pm, \chi)$ is not defined for every isomorphism
class of elliptic curves; rather it is defined only for certain values specified in Table \ref{tb region of partial Omega}. For each region $R$ of Table \ref{tb region of partial Omega}, or for
the entire upper-half plane $\HH$ when $\epsilon\,=\, 1$, let
\[
f_{(\epsilon,a,R)}^\pm \, :=\, q \circ j(\alpha_{(\epsilon,a)}, \pm) \circ q^{-1}
\]
be the automorphism of $\CC^* \times \CC^*$.

\begin{remark} \label{rm description of the j}
Given $\chi \,\in \,\Rr(Z)$, set $b_i \,=\, \chi(\delta_i)$. Note that $\chi$ satisfies \eqref{eq involution condition jmath} if and only if 
\[
(b_1^{-1}, b_2^{-1}) \,=\, f^{\pm}_{(\epsilon,a,R)}(b_1, b_2)\, .
\]
If this holds, then
\[
f^{\pm, (b_1, b_2)}_{(\epsilon,a,R)}\, :\, \CC^* \times \CC^*\,\longrightarrow\,\CC^* \times \CC^*
\, , \ \ (z_1, \,z_2)\, \longmapsto\,(b_1,b_2)
\cdot f^{\pm}_{(\epsilon,a,R)}(z_1, \, z_2)
\]
are involutions, and
\[
j(\alpha_{(\epsilon,a)}, \pm, \chi) \,=\, q^{-1} \circ f^{\pm, (b_1, b_2)}_{(\epsilon,a,R)} \circ q\, .
\]
\end{remark}

For any involution $\alpha_{(\epsilon,a)}$ of the curve $X$,
using Tables \ref{tb action of alpha on pi_1 hol} and \ref{tb action of alpha on pi_1 antihol}, we
can describe $f_{(\epsilon,a,R)}^\pm$, and therefore we can
describe $f^{\pm, (b_1, b_2)}_{(\epsilon,a,R)}$; see Table \ref{tb values of f^pm}.

\subsection{The general case}
\label{sc general case}

Let $\sigma_{+}$ and $\sigma_{-}$ respectively be the Cartan (holomorphic)
involution and the associated real form (anti-holomorphic involution) of 
$G$. Recall from Theorem \ref{CT} that they are related through the composition by the compact real form involution $\sigma_K$
commuting with $\sigma_{-}$, 
\begin{equation} \label{eq sigma_+ = sigma_- sigma_0}
\sigma_{+} \,=\, \sigma_{-} \sigma_K\, .
\end{equation}
Given a Cartan subgroup $T$ of $G$ preserved by $\sigma_{+}$, $\sigma_{-}$ and $\sigma_K$, we denote by the same symbols the involutions in the cocharacter lattice
$\Lambda_T \,=\, \Hom(\CC^*,\, T)$,
\begin{equation} \label{eq definition of sigma_+ on Lambda}
\sigma_{+}\,:\, \Lambda_T\,\longrightarrow\,\Lambda_T
\, , \ \ \lambda\, \longmapsto\,\sigma_{+} \circ \lambda
\end{equation}
and
\begin{equation} \label{eq definition of sigma_- on Lambda}
\sigma_{-}\,:\, \Lambda_T\,\longrightarrow\,\Lambda_T
\, , \ \ \lambda\, \longmapsto\,\sigma_{-} \circ \lambda \circ \sigma_{\U(1)}\, .
\end{equation}
Define an action of $\sigma_K$ on $\Lambda_T$ by $\sigma_K(\lambda)\,= \,
\sigma_K \circ \lambda \circ \sigma_{\U(1)}$; note that the equality
\eqref{eq sigma_+ = sigma_- sigma_0}, considered as an equality of involutions of $\Lambda_T$,
is recovered.

Since the Cartan subgroup $T$ is preserved by $\sigma_+$, $\sigma_-$ and 
$\sigma_K$, these involutions induce involutions of the normalizer $N_G(T)$, and 
therefore produce involutions of the Weyl group $W \,=\, N_{G}(T)/Z_{G}(T)$;
these involutions of the Weyl group are denoted by the same symbol.

\begin{remark} \label{rm sigma_+ = sigma_- on Lambda}
Let $T_0 \,:=\, T^{\sigma_K}$ be the compact torus fixed pointwise by
$\sigma_K$. Since $T_0^\CC \,=\, T$, it follows that $\Hom(\CC^*,\,T)
\,\cong\, \Hom(\U(1),\, T_0)$. The action of $\sigma_K$ is trivial on
$\Lambda_T \,\cong\, \Hom(\U(1),\, T_0)$. Also, we recall that $\sigma_K$ acts
trivially on $W$, because it is a compact real form. As a consequence,
the actions of $\sigma_{+}$ and $\sigma_{-}$ on $\Lambda_T$ coincide. The
same statement holds for the actions of $\sigma_{+}$ and $\sigma_{-}$ on $W$.
\end{remark}

\begin{remark} \label{rm description of sigma_+}
The Vogan diagram is constructed with the root data of a maximally compact Cartan 
subalgebra $\tT_\RR$ of a simple real Lie algebra $\gG_\RR$ and it consists on a 
triple $V \,=\, (D, \theta, S)$ where $D$ is the Dynkin diagram of $\gG \,=\, (\gG_\RR)^\CC$, 
$\theta$ is an automorphism of the diagram given by the Cartan involution $\sigma_{+}$ 
and $S$ is a subset (possibly empty) of the vertices of $D$ fixed by $\theta$.

The Vogan diagram encodes the action of $\sigma_{\epsilon}$ on
$\Lambda_T$ when $T$ is the complexification of a maximal compact subgroup of
$G_\RR$, the real subgroup fixed by $\sigma_{-}$.

Recall form Remark \ref{rm sigma_+ = sigma_- on Lambda} that it suffices to 
describe the action of $\sigma_{+}$ on $\Lambda_T$. The automorphism $\theta$ is an 
involution on the set of simple roots, so we obtain a description on the entire set of 
roots. Translate this involution to the coroot lattice $\Delta$ by setting 
$\theta(\alpha^{\vee}) \,= \,(\theta(\alpha))^{\vee}$, one gets an involution on 
$\tT \cong \CC \otimes_\ZZ \Delta$ which is precisely the Cartan involution 
$\sigma_{+}$. The restriction of $\sigma_{+}$ to $\Lambda \,\subset\, \tT$ coincides with 
\eqref{eq definition of sigma_+ on Lambda}.
\end{remark}

As done in \eqref{eq definition of sigma_+ on Lambda} and
\eqref{eq definition of sigma_- on Lambda}, define the holomorphic involution
\[
\dot{\sigma}_+\,:\, \CC^*\otimes_{\ZZ} \Lambda_{T}\,\longrightarrow\,
\CC^*\otimes_{\ZZ} \Lambda_{T}\, , \ \ \sum z_i \otimes_{\ZZ} \lambda_i\,\longmapsto\,
\sum z_i \otimes_{\ZZ} \sigma_{+}(\lambda_i)
\]
and the anti-holomorphic involution
\[
\dot{\sigma}_-\,:\, \CC^*\otimes_{\ZZ} \Lambda_{T}\,\longrightarrow\,
\CC^*\otimes_{\ZZ} \Lambda_{T}\, , \ \ \sum z_i \otimes_{\ZZ} \lambda_i\,\longmapsto\,
\sum \sigma_{\U(1)}(z_i) \otimes_{\ZZ} \sigma_{-}(\lambda_i)\, .
\]
We use again $\dot{\sigma}_\epsilon$ to refer simultaneously $\dot{\sigma}_+$ and $\dot{\sigma}_-$. Now the following diagram is commutative
\begin{equation} \label{eq Gamma commutes with sigma}
\xymatrix{
\CC^*\otimes_{\ZZ} \Lambda_{T} \ar[d]_{\dot{\sigma}_{\epsilon}}\ar[rr]^{\quad \eta} & & T \ar[d]^{\sigma_{\epsilon}}
\\
\CC^*\otimes_{\mathbb{Z}} \Lambda_{T}\ar[rr]^{\quad \eta} & & T.}
\end{equation}

The action of $W$ on $\Lambda_T$ is $\sigma_{\epsilon}$-equivariant, meaning for any $\omega \,\in \, W$ and any $\lambda \in \Lambda_T$,
\begin{equation} \label{eq the Weyl group sigma equivariant}
\sigma_{\epsilon}(\omega \cdot \lambda) \,= \,\sigma_{\epsilon}(\omega) \cdot \sigma_{\epsilon}(\lambda)\, .
\end{equation}
Therefore, we have
\begin{equation} \label{eq the Weyl group dot sigma equivariant}
\dot{\sigma}_\epsilon \circ \omega \,=\, \sigma_{\epsilon}(\omega) \circ \dot{\sigma}_\epsilon\, .
\end{equation}

Now define the involutions 
\[
\dot{I}(\alpha_{(\epsilon, a)},\sigma_{\epsilon},\pm)\,:\, T^*\widehat{X} \otimes_\ZZ \Lambda_T\,\longrightarrow\,
T^*\widehat{X}\otimes_\ZZ \Lambda_T\, , 
\] 
\[  
\sum (L_i,\psi_i)\otimes \lambda_i\,
\longmapsto\, \sum i(\alpha_{(\epsilon, a)},\pm)(L_i,\psi_i) \otimes \sigma_{\epsilon}(\lambda_i)\, ,
\]
and
\[
\dot{J}(\alpha_{(\epsilon, a)},\sigma_{\epsilon},\pm)\,:\,\Hom(\pi_1(X),\CC^*) \otimes_\ZZ \Lambda_T \longrightarrow\,\Hom(\pi_1(X),\CC^*) \otimes_\ZZ \Lambda_T\, , \ \
\]
\[
\sum \rho_i \otimes \lambda_i\,\longmapsto\,
\sum j(\alpha_{(\epsilon, a)},\pm)(\rho_i) \otimes \sigma_{\epsilon}(\lambda_i)\, .
\]

Fix any $\Ff \,= \,(F,\, \phi)\,\in\, \Mm(Z)$; let
$$\chi \,:\, \pi_1(X) \,\longrightarrow\, Z$$
be the corresponding representation of the fundamental group. We recall that
\[
\Mm(Z) \,\cong\, T^*\widehat{X} \otimes_\ZZ \Lambda_Z\, ,\ \
\Rr(Z) \,\cong\, \Hom(\pi_1(X), \CC^*) \otimes_\ZZ \Lambda_Z\, .
\]
Let $[\Ff] \,=\, \dot{\xi}^{-1}(\Ff)$ and $[\chi] \,=\, \dot{\zeta}^{-1}(\chi)$
be the corresponding elements of the groups $T^*\widehat{X} \otimes_\ZZ \Lambda_T$
and $\Hom(\pi_1(X), \CC^*) \otimes_\ZZ \Lambda_T$ respectively. Following the
definitions of $I(\alpha_{(\epsilon, a)},\sigma_{\epsilon},\pm, \Ff)$ and $J(\alpha_{(\epsilon, a)},\sigma_{\epsilon},\pm, \Ff)$, define 
\[
\dot{I}(\alpha_{(\epsilon, a)},\sigma_{\epsilon},\pm, \Ff)\,:\, T^*\widehat{X} \otimes_\ZZ \Lambda_T
 \,\longrightarrow\,T^*\widehat{X} \otimes_\ZZ \Lambda_T\, , 
\] 
\[  
\sum (L_i,\psi_i) \otimes \lambda_i\,\longmapsto\,
[\Ff] + \dot{I}(\alpha_{(\epsilon, a)},\sigma_{\epsilon},\pm) \left ( \sum z_i
\otimes_\ZZ \lambda_i \right )
\]
and
\[
\dot{J}(\alpha_{(\epsilon, a)},\sigma_{\epsilon},\pm, \chi)\,:\,\Hom(\pi_1(X),\CC^*) \otimes_\ZZ \Lambda_T \longrightarrow\,\Hom(\pi_1(X),\CC^*) \otimes_\ZZ \Lambda_T\, , \ \
\]
\[
\sum \rho_i \otimes \lambda_i\,\longmapsto\,[\chi] + \dot{J}(\alpha_{(\epsilon, a)},\sigma_{\epsilon},\pm) \left ( \sum \rho_i \otimes \lambda_i \right )\, .
\]

\begin{lemma}
The diagrams
\begin{equation}\label{eq i commutes with xi}
\xymatrix{
T^*\widehat{X} \otimes_\ZZ \Lambda_T \ar[rr]^{\dot{\xi}} \ar[d]_{\dot{I}(\alpha_{(\epsilon, a)},\sigma_{\epsilon},\pm, \Ff)} & & \Mm(G) \ar[d]^{I(\alpha_{(\epsilon, a)},\sigma_{\epsilon},\pm, \Ff)}
\\
T^*\widehat{X} \otimes_\ZZ \Lambda_T \ar[rr]^{\dot{\xi}} & & \Mm(G)
}
\end{equation}
and
\begin{equation} \label{eq j commutes with zeta}
\xymatrix{
\Hom(\pi_1(X),\CC^*) \otimes_\ZZ \Lambda_T \ar[rr]^{\quad \quad \dot{\zeta}} \ar[d]_{\dot{J}(\alpha_{(\epsilon, a)},\sigma_{\epsilon},\pm, \Ff)} & & \Rr(G) \ar[d]^{J(\alpha_{(\epsilon, a)},\sigma_{\epsilon},\pm, \Ff)}
\\
\Hom(\pi_1(X),\CC^*) \otimes_\ZZ \Lambda_T \ar[rr]^{\quad \quad \dot{\zeta}} & & \Rr(G)
}
\end{equation}
commute.
\end{lemma}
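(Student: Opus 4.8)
The plan is to verify commutativity of the two diagrams by reducing everything to the rank-one computations already carried out in Section \ref{sc rank 1} and the compatibility \eqref{eq Gamma commutes with sigma} of $\dot\sigma_\epsilon$ with $\eta$, together with the $\sigma_\epsilon$-equivariance \eqref{eq the Weyl group dot sigma equivariant} of the $W$-action. The first observation is that, by the very definitions of $\dot I(\alpha_{(\epsilon,a)},\sigma_\epsilon,\pm,\Ff)$ and $I(\alpha_{(\epsilon,a)},\sigma_\epsilon,\pm,\Ff)$, both involutions are obtained from their $\Ff$-free counterparts by translation by $[\Ff]$ (respectively $\Ff$); since $\dot\xi$ is a group homomorphism and by Theorem \ref{tm HB on EC} sends the $\Mm(Z)$-translation on $T^*\widehat X\otimes_\ZZ\Lambda_T$ to the $\Mm(Z)$-action \eqref{eq action on Mm} on $\Mm(G)$, it suffices to prove the diagram \eqref{eq i commutes with xi} with $\Ff$ replaced by the trivial element, i.e. to show $\dot\xi\circ\dot I(\alpha_{(\epsilon,a)},\sigma_\epsilon,\pm)=I(\alpha_{(\epsilon,a)},\sigma_\epsilon,\pm)\circ\dot\xi$ on $T^*\widehat X\otimes_\ZZ\Lambda_T$; the same reduction applies verbatim to \eqref{eq j commutes with zeta} using $\dot\zeta$ and the $\Rr(Z)$-action \eqref{eq action on Rr}.

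Next I would trace a generator $\sum_i(L_i,\psi_i)\otimes\lambda_i$ through both sides of the reduced square. On one side, $\dot\xi$ first applies $\eta_*$, producing $(\bigotimes(\lambda_i)_*L_i,\sum(d\lambda_i)_*\psi_i)\in\Mm(T)$, then extends the structure group along $T\hookrightarrow G$; applying $I(\alpha_{(\epsilon,a)},\sigma_\epsilon,\pm)$ then pulls back by $\alpha_{(\epsilon,a)}$ and extends along $\sigma_\epsilon\colon G\to G$. On the other side, $\dot I(\alpha_{(\epsilon,a)},\sigma_\epsilon,\pm)$ first applies the rank-one involution $i(\alpha_{(\epsilon,a)},\pm)$ to each $(L_i,\psi_i)$ and replaces $\lambda_i$ by $\sigma_\epsilon(\lambda_i)$, and then $\dot\xi$ extends to $G$. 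The two match because extension of structure group along $\sigma_\epsilon$ composed with $(\lambda_i)_*$ equals $(\sigma_\epsilon\circ\lambda_i)_*=(\sigma_\epsilon(\lambda_i))_*$ composed with the $\sigma_\epsilon$ on the $\CC^*$-factor, which is exactly $\sigma_{\U(1)}$ in the anti-holomorphic case and the identity in the holomorphic case — and this last matching is precisely the content of \eqref{eq Gamma commutes with sigma}, \eqref{eq definition of sigma_+ on Lambda} and \eqref{eq definition of sigma_- on Lambda}. For the line-bundle part, in the holomorphic case there is nothing extra since $\id_{\CC^*}$ acts trivially, and in the anti-holomorphic case one uses that $\sigma_{\U(1)}$ applied fiberwise to a $\CC^*$-bundle $L$ gives $\overline L^*$, which is exactly how $i(\alpha_{(-,a)},\pm)$ was defined via Lemma \ref{lm description of the i}; the Higgs-field part is the differential $d\sigma_\epsilon$ statement, again handled fiberwise and already encoded in the $\pm$ and $\conj$ of the rank-one formulas. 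The representation-variety square \eqref{eq j commutes with zeta} is entirely parallel, using $(\lambda_i\circ\rho_i)$ in place of $(\lambda_i)_*L_i$, the definition of $\dot\zeta$, and $j(\alpha_{(\epsilon,a)},\pm)$ in place of $i(\alpha_{(\epsilon,a)},\pm)$; the identity $\sigma_\epsilon\circ(\lambda_i\circ\rho_i)=(\sigma_\epsilon(\lambda_i))\circ(\sigma_\epsilon\circ\rho_i\circ(\alpha_{(\epsilon,a)})_*\ \text{or its conjugate})$ is what closes it.

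Finally I would note the one point requiring genuine care: the maps $\dot\xi$ and $\dot\zeta$ are not the chosen representative $\eta_*$ followed by extension, but rather descend through the $W$-quotient, so one must check that $\dot I$ and $\dot J$ are well-defined on $T^*\widehat X\otimes_\ZZ\Lambda_T$ before passing to the quotient and that they intertwine the $W$-actions — but this is exactly \eqref{eq the Weyl group dot sigma equivariant}, which says $\dot\sigma_\epsilon\circ\omega=\sigma_\epsilon(\omega)\circ\dot\sigma_\epsilon$, so the diagram is compatible with $W$ and hence the asserted commutativity on $T^*\widehat X\otimes_\ZZ\Lambda_T$ passes to $\Mm(G)$ and $\Rr(G)$. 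The main obstacle is thus not any single computation but bookkeeping: keeping straight, in the anti-holomorphic case, the interaction between $\sigma_{\U(1)}$ on the $\CC^*$-factor, the sign $a\mapsto-a$ appearing in Lemma \ref{lm description of the i}, and the action of $\sigma_-$ on $\Lambda_T$ defined in \eqref{eq definition of sigma_- on Lambda}, so that the composite really is an involution and really does match the rank-one description region by region as in Table \ref{tb region of partial Omega}.
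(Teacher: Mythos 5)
Your proposal is correct and follows essentially the same route as the paper's (much terser) proof, which rests on exactly the two ingredients you identify: the compatibility \eqref{eq Gamma commutes with sigma} of $\eta$ with $\sigma_{\epsilon}$ together with the construction of $\dot{\xi}$ and $\dot{\zeta}$, and the $W$-equivariance \eqref{eq the Weyl group dot sigma equivariant} to descend through the quotient. The explicit reduction to the $\Ff$-free case and the generator-by-generator bookkeeping you describe are details the paper leaves implicit, but they fill in rather than replace its argument.
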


\begin{proof}
In view of \eqref{eq the Weyl group dot sigma equivariant} it follows that
$\dot{I}(\alpha_{(\epsilon, a)},\sigma_{\epsilon},\pm, \Ff)$ induces an involution on the quotient $T^*\widehat{X} \otimes_\ZZ \Lambda_T / W$. From
\eqref{eq Gamma commutes with sigma} and the construction of $\xi$ it is
clear that this induced involution coincides with $I(\alpha_{(\epsilon, a)},\sigma_{\epsilon},\pm, \Ff)$. 

The proof of the commutativity of \eqref{eq j commutes with zeta} is analogous.
\end{proof}

In Section \ref{sc higgs bundles on elliptic curves} we chose a basis
of $\Lambda_T$ with an orthogonal decomposition
$$\{ \lambda_1, \cdots, \lambda_\ell \} \oplus \{ \lambda_{\ell + 1}, \cdots,
\lambda_s \}\, ,$$
where $\{ \lambda_1, \cdots, \lambda_\ell \}$ is an orthogonal basis of
$\Lambda_Z \,=\, \Hom(\CC^*,\, Z)$. Take a $Z$-Higgs bundle $\Ff \,=\, (F, \,\phi)$, and
set $\Ff_i^{\epsilon} \,=\, (F^{\epsilon}_i, \phi^{\epsilon}_i)$ to be the Higgs line bundles such that
\begin{equation} \label{eq condition on Jj_i}
[\Ff] \,=\, \sum [\Ff_i^\epsilon] \otimes \sigma_{\epsilon}(\lambda_i)\, .
\end{equation}
Let us extend this by setting $\Ff^{\epsilon}_i \,:=\, (\Oo_X,\, 0)$ for
$\ell+1 \,\leq\, i \,\leq\, s$. Analogously, we take a decomposition of the
representation $\chi \,:\, \pi_1(X) \,\longrightarrow\, Z$ 
\begin{equation} \label{eq condition on chi_i}
[\chi] \,= \,\sum [\chi_i^\epsilon] \otimes \sigma_{\epsilon}(\lambda_i) 
\end{equation}
where $\chi^{\epsilon}_i$ are homomorphisms from $\pi_1(X)$ to $\CC^*$.
As before, set $\chi^{\epsilon}_i \,=\, \id$ for $\ell+1\,\leq\, i\,\leq\, s$.

\begin{proposition} \label{pr description of the I and J in therms of i and j}
For $\Ff$ and $\chi$ as above, construct $\Ff_i^{\epsilon}$ and $\chi_i^{\epsilon}$
as in \eqref{eq condition on Jj_i} and \eqref{eq condition on chi_i} respectively. Then
\[
\dot{I}(\alpha_{(\epsilon, a)},\sigma_{\epsilon},\pm, \Ff) \left ( \sum (L_i,\psi_i) \otimes \lambda_i \right )\,=\,
\sum i(\alpha_{(\epsilon, a)},\pm, \Ff^{\epsilon}_i)(L_i,\psi_i)
\otimes \sigma_{\epsilon}(\lambda_i)
\]
and
\[
\dot{J}(\alpha_{(\epsilon, a)},\sigma_{\epsilon},\pm, \Ff) \left ( \sum \rho_i \otimes \lambda_i \right )\,=\,
\sum j(\alpha_{(\epsilon, a)},\pm, \chi^{\epsilon}_i)(\rho_i)
\otimes \sigma_{\epsilon}(\lambda_i)\, .
\]
\end{proposition}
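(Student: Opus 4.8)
The plan is to unwind the definitions of $\dot{I}(\alpha_{(\epsilon,a)},\sigma_\epsilon,\pm,\Ff)$ and $\dot{J}(\alpha_{(\epsilon,a)},\sigma_\epsilon,\pm,\chi)$ and compare them term by term with the right-hand sides, using the decomposition of the twisting datum dictated by \eqref{eq condition on Jj_i} and \eqref{eq condition on chi_i}. Concretely, for the first identity I would start from the defining formula
\[
\dot{I}(\alpha_{(\epsilon, a)},\sigma_{\epsilon},\pm, \Ff) \left ( \sum (L_i,\psi_i) \otimes \lambda_i \right )\,=\,
[\Ff] + \dot{I}(\alpha_{(\epsilon, a)},\sigma_{\epsilon},\pm) \left ( \sum (L_i,\psi_i) \otimes \lambda_i \right )\, ,
\]
then expand the second summand using the definition of $\dot{I}(\alpha_{(\epsilon,a)},\sigma_\epsilon,\pm)$, which replaces $(L_i,\psi_i)$ by $i(\alpha_{(\epsilon,a)},\pm)(L_i,\psi_i)$ and $\lambda_i$ by $\sigma_\epsilon(\lambda_i)$. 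Finally I would substitute $[\Ff] = \sum [\Ff_i^\epsilon]\otimes\sigma_\epsilon(\lambda_i)$ and add the two sums coordinatewise in the decomposition indexed by $\sigma_\epsilon(\lambda_i)$.

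The key point making the coordinatewise addition legitimate is that $\{\sigma_\epsilon(\lambda_1),\dots,\sigma_\epsilon(\lambda_s)\}$ is again a basis of $\Lambda_T$ (since $\sigma_\epsilon$ is an automorphism of the lattice), so the tensor factor decomposition $T^*\widehat{X}\otimes_\ZZ\Lambda_T \cong \bigoplus_i T^*\widehat{X}\otimes\sigma_\epsilon(\lambda_i)$ is well defined, and the group operation on $\Mm(Z)$ — hence on $T^*\widehat{X}\otimes_\ZZ\Lambda_T$ — is performed slot by slot. Thus the $i$-th slot of $[\Ff] + \dot{I}(\dots)(\sum (L_i,\psi_i)\otimes\lambda_i)$ is exactly $[\Ff_i^\epsilon] \otimes \sigma_\epsilon(\lambda_i) + i(\alpha_{(\epsilon,a)},\pm)(L_i,\psi_i)\otimes\sigma_\epsilon(\lambda_i)$. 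Now one invokes the definition of the twisted rank-one involution: $i(\alpha_{(\epsilon,a)},\pm,\Ff_i^\epsilon)(L_i,\psi_i) = \Ff_i^\epsilon \otimes i(\alpha_{(\epsilon,a)},\pm)(L_i,\psi_i)$ (this is precisely how $i(\alpha_{(\epsilon,a)},\pm,\Ff)$ was defined in Section \ref{sc rank 1}, as the $\Ff$-translate of $i(\alpha_{(\epsilon,a)},\pm)$), which matches the $i$-th slot and proves the claimed equality. The extension convention $\Ff_i^\epsilon = (\Oo_X,0)$ for $\ell+1\le i\le s$ handles the slots not coming from $\Lambda_Z$: in those slots $[\Ff]$ contributes nothing, consistent with $\Ff\in\Mm(Z)$ only producing components along $\Lambda_Z$.

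The argument for $\dot{J}$ is formally identical, replacing $\Mm(Z)$, $T^*\widehat{X}$, $i(\cdot)$, $\Ff_i^\epsilon$, and additive notation by $\Rr(Z)$, $\Hom(\pi_1(X),\CC^*)$, $j(\cdot)$, $\chi_i^\epsilon$, and the multiplicative action $\chi\cdot(-)$ respectively; one uses that $j(\alpha_{(\epsilon,a)},\pm,\chi_i^\epsilon)(\rho_i) = \chi_i^\epsilon\cdot j(\alpha_{(\epsilon,a)},\pm)(\rho_i)$ by the definition in Section \ref{sc rank 1}. I do not expect a genuine obstacle here — this proposition is essentially bookkeeping that propagates the rank-one formulas of Lemma \ref{lm description of the i} and Remark \ref{rm description of the j} through the tensor decomposition. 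The only thing requiring a word of care is the compatibility of the $\Lambda_Z \subset \Lambda_T$ inclusion with the splitting: one must note that $\sigma_\epsilon$ preserves $\Lambda_Z$ (it preserves $Z$), so $\{\sigma_\epsilon(\lambda_1),\dots,\sigma_\epsilon(\lambda_\ell)\}$ remains a basis of $\Lambda_Z$ and the decomposition \eqref{eq condition on Jj_i} is consistent with extending by $(\Oo_X,0)$; this is the one place where the orthogonality/compatibility of the chosen basis is actually used.
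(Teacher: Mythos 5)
Your proposal is correct and follows essentially the same route as the paper: expand $\dot{I}(\alpha_{(\epsilon,a)},\sigma_\epsilon,\pm,\Ff)$ by its definition, substitute $[\Ff]=\sum[\Ff_i^\epsilon]\otimes\sigma_\epsilon(\lambda_i)$, collect terms slot by slot in the basis $\{\sigma_\epsilon(\lambda_i)\}$, and recognize each slot as $i(\alpha_{(\epsilon,a)},\pm,\Ff_i^\epsilon)$, with the $\dot{J}$ case handled analogously. The extra remarks you make about $\sigma_\epsilon$ preserving $\Lambda_Z$ and the legitimacy of coordinatewise addition are sensible bookkeeping that the paper leaves implicit.
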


\begin{proof}
Once we have the descriptions of $\dot{I}(\alpha_{(\epsilon, a)},\sigma_{\epsilon},\pm)$ and
$\dot{J}(\alpha_{(\epsilon, a)},\sigma_{\epsilon},\pm)$, it is immediate to derive descriptions of
$\dot{I}(\alpha_{(\epsilon, a)},\sigma_{\epsilon},\pm, \Ff)$ and $\dot{J}(\alpha_{(\epsilon, a)},\sigma_{\epsilon},\pm, \chi)$. We have 
\begin{align*}
\dot{I}(\alpha_{(\epsilon, a)},\sigma_{\epsilon},\pm, \Ff) \left ( \sum (L_i,\psi_i) \otimes \lambda_i \right )
\,=\,& \sum [\Ff_i^\epsilon] \otimes \sigma_{\epsilon}(\lambda_i) + \sum i(\alpha_{(\epsilon, a)},\pm)(L_i,\psi_i) \otimes \sigma_{\epsilon}(\lambda_i)
\\
\,=\,& \sum \left ( [\Ff_i^\pm] + i(\alpha_{(\epsilon, a)},\pm)(L_i,\psi_i) \right ) \otimes \sigma_{\epsilon}(\lambda_i)
\\
\,=\,& \sum i(\alpha_{(\epsilon, a)},\pm, \Ff^\epsilon_i)(L_i,\psi_i) \otimes \sigma_{\epsilon}(\lambda_i)\, .
\end{align*}
The case of $\dot{J}(\alpha_{(\epsilon, a)},\sigma_{\epsilon},\pm, \chi)$ is analogous.
\end{proof}

\subsection{Description of the fixed point locus}
\label{sc description of fixed loci}

Consider the cocharacter lattice $\Lambda_T$ of a complex reductive Lie group $G$. 
Denote by $\sigma_{-}$ the anti-holomorphic involution of a certain 
real form, and denote by $\sigma_{+}$ the Cartan involution associated to it. Recall
from \eqref{eq the Weyl group sigma equivariant} that the Weyl group $W$ acts 
$\sigma_{\epsilon}$-equivariantly on $\Lambda_T$.

Let $A$ be a complex abelian group; construct the tensor product 
\[
\dot{B} \, :=\, A \otimes_\ZZ \Lambda_T\, .
\]
Consider also the action of $W$ on $\dot{B}$ induced by the action of $W$ on $\Lambda$
and take its quotient
\[
B \,:=\, \dot{B}/W\, . 
\]
Fix a basis $\{ \lambda_1, \cdots, \lambda_s \}$ of $\Lambda$ and suppose that
$A$ is equipped with a set of analytic group homomorphisms of order two $\{t_1, \cdots, t_s \}$
\[
t_i \,:\, A \,\longrightarrow\, A
\]
such that we can combine them with $\sigma_{\epsilon}$ to obtain
the analytic involution
\[
\dot{\tau}\,:\,\dot{B} \,=\, A \otimes_\ZZ \Lambda \,\longrightarrow\,
\dot{B}\, ,\ \ \sum a_i\otimes \lambda_i \,\longmapsto\,\sum t_i(a_i) \otimes
\sigma_{\epsilon}(\lambda_i)\, .
\]
Since the action of the Weyl group is $\sigma_{\epsilon}$-equivariant, it follows that
$\dot{\tau}$ induces an involution on the quotient
\[
\tau\,:\,B\,=\, (A \otimes_\ZZ \Lambda)/W\,\longrightarrow\,B\, ,\ \ 
\left[ \sum a_i \otimes \lambda_i \right]_W \,\longmapsto\,\left[
\sum t_i(a_i) \otimes \sigma_{\epsilon}(\lambda_i) \right]_W\, .
\]
Evidently, the diagram
\[
\xymatrix{
A \otimes_\ZZ \Lambda_T \ar[r]^{p} \ar[d]_{\dot{\tau}} & A \otimes_\ZZ \Lambda_T/W \ar[d]^{\tau}
\\
A \otimes_\ZZ \Lambda_T \ar[r]^{p} & A \otimes_\ZZ \Lambda_T/W
}
\]
commutes, where $p$ is the projection induced by the quotient map
for the action of $W$. By construction, we have a similar commuting diagram for
every element $\omega \in W$,
\begin{equation} \label{eq tau commutes with the projection}
\xymatrix{
A \otimes_\ZZ \Lambda_T \ar[r]^{p} \ar[d]_{\omega\dot{\tau}} & A \otimes_\ZZ \Lambda_T/W \ar[d]^{\tau}
\\
A \otimes_\ZZ \Lambda_T \ar[r]^{p} & A \otimes_\ZZ \Lambda_T/W
}
\end{equation}

The aim of this section is to describe the fixed point set
$(A \otimes_\ZZ \Lambda_T/W)^\tau$. To do so, we will make use of the commutativity of
\eqref{eq tau commutes with the projection}. Define, for any $\omega \,\in\, W$, the
subgroup of $A \otimes_\ZZ \Lambda_T/W$
\[
(A \otimes_\ZZ \Lambda_T/W)^{\tau}_\omega \,:=\,
p((A \otimes_\ZZ \Lambda_T)^{\omega\dot{\tau}})\, .
\]
One has that
\[
(A \otimes_\ZZ \Lambda_T/W)^\tau \,=\, \bigcup_{\omega \in W}
(A \otimes_\ZZ \Lambda_T/W)^\tau_\omega\, .
\]

\begin{remark}
Note that each $\dot{B}^{\omega \dot{\tau}}$ is a closed subset, because it is
a fixed point locus of an involution. The projection $p$ is continuous, and therefore
$B^\tau_\omega \,= \, p(\dot{B}^{\omega \dot{\tau}})$ is closed as well.
\end{remark}

\begin{lemma} \label{lm coboundary condition}
The equality 
\[
(A \otimes_\ZZ \Lambda_T/W)^{\tau}_{\omega_1} \,=\,
(A \otimes_\ZZ \Lambda_T/W)^{\tau}_{\omega_2}
\]
holds if and only if there exists an element $\omega' \,\in \,W$ such that 
\begin{equation}\label{eq coboundary condition}
\omega_2 \,=\, \omega' \omega_1 \sigma_{\epsilon}(\omega')^{-1}\, .
\end{equation}
\end{lemma}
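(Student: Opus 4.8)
The plan is to reduce the equality of the two subsets $B^\tau_{\omega_1}$ and $B^\tau_{\omega_2}$ to a statement about the orbits under the $W$-action twisted by $\sigma_\epsilon$, exploiting the commuting squares \eqref{eq tau commutes with the projection}. First I would record the elementary observation that for every pair $\omega, \omega' \in W$ one has $\omega' \cdot \dot{B}^{\omega\dot{\tau}} = \dot{B}^{\omega''\dot{\tau}}$ where $\omega'' = \omega' \omega \sigma_\epsilon(\omega')^{-1}$; this is a direct computation using \eqref{eq the Weyl group dot sigma equivariant}, since if $x$ is fixed by $\omega\dot{\tau}$ then $\omega' x$ is fixed by $\omega' \omega \dot{\tau} (\omega')^{-1} = \omega' \omega \sigma_\epsilon(\omega')^{-1} \dot{\tau}$. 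Because $p$ is $W$-invariant, applying $p$ gives $B^\tau_{\omega''} = p(\omega' \cdot \dot{B}^{\omega\dot{\tau}}) = p(\dot{B}^{\omega\dot{\tau}}) = B^\tau_\omega$. This immediately yields the ``if'' direction: if $\omega_2 = \omega'\omega_1\sigma_\epsilon(\omega')^{-1}$ for some $\omega' \in W$, then $B^\tau_{\omega_1} = B^\tau_{\omega_2}$.

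For the ``only if'' direction, suppose $B^\tau_{\omega_1} = B^\tau_{\omega_2}$, i.e. $p(\dot{B}^{\omega_1\dot{\tau}}) = p(\dot{B}^{\omega_2\dot{\tau}})$. The idea is to pick a sufficiently generic point in one fixed locus and track which $W$-translate lands it in the other. Concretely, choose $\dot{x} \in \dot{B}^{\omega_1\dot{\tau}}$ whose $W$-stabilizer is trivial (or at least as small as possible — genericity of $\dot{x}$ inside the subgroup $\dot{B}^{\omega_1\dot{\tau}}$ is what makes this work, and one should argue such a point exists since $\dot{B}^{\omega_1\dot{\tau}}$ is a subgroup of a torus-like object, hence connected and positive-dimensional generically, while the locus of points with nontrivial stabilizer is a proper closed subset). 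Then $p(\dot{x}) \in B^\tau_{\omega_2}$, so there exists $\dot{y} \in \dot{B}^{\omega_2\dot{\tau}}$ and $\omega' \in W$ with $\dot{y} = \omega' \dot{x}$. By the computation of the previous paragraph, $\omega'\dot{x}$ is fixed by $(\omega'\omega_1\sigma_\epsilon(\omega')^{-1})\dot{\tau}$; it is also fixed by $\omega_2\dot{\tau}$. Hence $\dot{y}$ is fixed by both $\omega_2\dot{\tau}$ and $\omega'\omega_1\sigma_\epsilon(\omega')^{-1}\dot{\tau}$, so it is fixed by the composite, which is the $W$-action of $\omega_2(\omega'\omega_1\sigma_\epsilon(\omega')^{-1})^{-1}$. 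Since $\dot{y} = \omega'\dot{x}$ has trivial stabilizer, this forces $\omega_2 = \omega'\omega_1\sigma_\epsilon(\omega')^{-1}$, which is \eqref{eq coboundary condition}.

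The step I expect to be the main obstacle is the genericity argument: justifying that $\dot{B}^{\omega_1\dot{\tau}}$ contains a point with trivial (or minimal) $W$-stabilizer, and handling the case where $\dot{B}^{\omega_1\dot{\tau}}$ is small or where every point has a nontrivial stabilizer. The clean way around this is to replace ``trivial stabilizer'' by the following more robust assertion: the set of $\omega' \in W$ with $\omega'\dot{B}^{\omega_1\dot{\tau}} = \dot{B}^{\omega_2\dot{\tau}}$, as $\dot{x}$ ranges over a dense subset, forces a single coset relation. One shows that if $p(\dot{B}^{\omega_1\dot{\tau}}) = p(\dot{B}^{\omega_2\dot{\tau}})$ then $\dot{B}^{\omega_2\dot{\tau}} = \bigcup_{\omega' \in W} (\omega'\dot{B}^{\omega_1\dot{\tau}} \cap \dot{B}^{\omega_2\dot{\tau}})$, a finite union of closed subsets; since $\dot{B}^{\omega_2\dot{\tau}}$ is irreducible (it is the fixed locus of an affine involution on $A \otimes_\ZZ \Lambda_T$ with $A$ a complex torus factor, hence a translate of a subtorus times a linear space), one of these closed subsets must be all of $\dot{B}^{\omega_2\dot{\tau}}$. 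That gives a single $\omega'$ with $\dot{B}^{\omega_2\dot{\tau}} \subseteq \omega'\dot{B}^{\omega_1\dot{\tau}} = \dot{B}^{\omega'\omega_1\sigma_\epsilon(\omega')^{-1}\dot{\tau}}$, and by symmetry (swapping the roles and using that both are subgroups of the same dimension) the inclusion is an equality; comparing the two affine involutions $\omega_2\dot{\tau}$ and $\omega'\omega_1\sigma_\epsilon(\omega')^{-1}\dot{\tau}$ with the same fixed locus, one extracts $\omega_2 = \omega'\omega_1\sigma_\epsilon(\omega')^{-1}$ from the linear parts. I would present the proof in this irreducibility-based form to avoid fiddly case analysis.
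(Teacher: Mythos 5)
Your opening computation --- the identity $\omega'\cdot\dot{B}^{\omega\dot{\tau}} \,=\, \dot{B}^{\omega'\omega\sigma_{\epsilon}(\omega')^{-1}\dot{\tau}}$, from which the ``if'' direction follows by applying the $W$-invariant projection $p$ --- is exactly the content of the paper's proof: the paper establishes this identity and then simply states that it ``implies the lemma.'' So for that half you are in complete agreement with the source.

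The genuine gap is in your ``only if'' direction, and it lies in the structural claims you make about $\dot{B}^{\omega\dot{\tau}}$. The fixed-point set of an (affine) involutive automorphism of a complex abelian Lie group is in general disconnected, hence not irreducible: already for $A=T^*\widehat{X}$ and the involution $(L,\psi)\longmapsto(L^{-1},-\psi)$ (which is precisely $i(\alpha_{(+,-1)},-)$, one of the maps $t_i$ actually occurring here) the fixed locus is the four points $(L,0)$ with $L^{\otimes 2}\cong\Oo_X$. More generally $\dot{B}^{\omega\dot{\tau}}$ is a coset of a possibly disconnected closed subgroup, so the step ``a finite union of closed subsets covering an irreducible set forces one of them to be everything'' does not apply; and your fallback genericity argument breaks in exactly these degenerate cases, since such torsion points typically have nontrivial $W$-stabilizers (the origin of $\dot{B}$, for instance, is fixed by all of $W$). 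Even granting a single $\omega'$ with $\dot{B}^{\omega_2\dot{\tau}}=\dot{B}^{\omega'\omega_1\sigma_{\epsilon}(\omega')^{-1}\dot{\tau}}$, your final step --- recovering $\omega_2=\omega'\omega_1\sigma_{\epsilon}(\omega')^{-1}$ from equality of fixed loci --- is unjustified, because distinct involutions can share a fixed-point set (for the $\Ff$-shifted, translation-type involutions the fixed locus can even be empty). In fairness, the paper does not close this direction either: its proof consists solely of the conjugation identity, which really shows that the labelling of the loci $\dot{B}^{\omega\dot{\tau}}$ descends to a well-defined labelling of the components $B^{\tau}_{\omega}$ by $\sigma_{\epsilon}$-twisted conjugacy classes --- the statement actually used in the sequel --- rather than the full set-theoretic biconditional. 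You correctly identified that the converse needs an argument; the argument you supply, however, does not work as written.
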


\begin{proof}
We have that $x' \,\in\, (A \otimes_\ZZ \Lambda_T)^{\omega\dot{\tau}}$ if and only if
$x'\,= \,\omega \dot{\tau}(x')$. Now $x\,= \,\omega' x'$ lies in
$\omega' \cdot (A \otimes_\ZZ \Lambda_T)^{\omega\dot{\tau}}$ if and only if
\[
(\omega')^{-1} x \,=\, \omega \dot{\tau}((\omega')^{-1} x)\, .
\]
In that case,
\[
x \,=\, \omega' \omega \sigma_{\epsilon}(\omega')^{-1} \dot{\tau}(x)\, .
\]
We see that for $\omega, \omega' \,\in\, W$,
\[
\omega' \cdot (A \otimes_\ZZ \Lambda_T)^{\omega\dot{\tau}}
\,=\, (A \otimes_\ZZ \Lambda_T)^{\omega' \omega \sigma_{\epsilon}(\omega')^{-1}\dot{\tau}}\, ,
\]
which implies the lemma. 
\end{proof}

Following \eqref{eq coboundary condition}, define the $\sigma_{\epsilon}$-adjoint action
\[
\ad_{\sigma_{\epsilon}}\, :\, W \times W\,\longrightarrow\, W\, , \ \
(\omega',\, \omega)\,\longmapsto\,\omega' \omega \sigma_{\epsilon}(\omega')^{-1}\, .
\]
{}From Lemma \ref{lm coboundary condition} it follows that the components are
parametrized by 
\[
W /_{\sigma_{\epsilon}} W \,=\, W/\ad_{\sigma_{\epsilon}}(W)\, .
\]
Therefore, we write
\[
(A \otimes_\ZZ \Lambda_T/W)^\tau \,=\, \bigcup_{\overline{\omega}
\in W /_{\sigma_{\epsilon}} W} (A \otimes_\ZZ \Lambda_T/W)^\tau_{\omega}\, ,
\]
where $\omega$ is a representative of the $\sigma_{\epsilon}$-conjugacy class
$\overline{\omega} \,\in\, W /_{\sigma_{\epsilon}} W$.

\begin{proposition} \label{pr descrition of B^tau_omega}
Denote by $T^{\omega \sigma_{\epsilon}}$ the subtorus fixed by $\omega \sigma_{\epsilon}$.
Then
\begin{equation} \label{eq B^tau_omega =  dotB^tau_omega quotiented by Z_W^tau}
(A \otimes_\ZZ \Lambda_T/W)^{\tau}_{\omega} \,\cong\, 
((A \otimes_\ZZ \Lambda_T)^{\omega\dot{\tau}})/N_W(T^{\omega \sigma_{\epsilon}})\, .
\end{equation}
Furthermore,
\begin{equation} \label{eq dimension of B^tau_omega}
\dim(B^\tau_\omega) \,=\, (\dim(A \otimes_\ZZ \Lambda_T))/2 \cdot \ord(\omega
\sigma_{\epsilon}(\omega))\, .
\end{equation}
\end{proposition}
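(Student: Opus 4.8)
The plan is to prove the two displayed formulas by analyzing the projection $p : \dot{B} \to B$ restricted to the closed subset $\dot{B}^{\omega\dot{\tau}} = (A\otimes_\ZZ\Lambda_T)^{\omega\dot{\tau}}$. First I would establish the isomorphism \eqref{eq B^tau_omega =  dotB^tau_omega quotiented by Z_W^tau}. By definition $(A\otimes_\ZZ\Lambda_T/W)^\tau_\omega = p(\dot{B}^{\omega\dot{\tau}})$, so there is a surjection $\dot{B}^{\omega\dot{\tau}} \twoheadrightarrow (A\otimes_\ZZ\Lambda_T/W)^\tau_\omega$. The key point is to identify exactly when two points $x, x' \in \dot{B}^{\omega\dot{\tau}}$ have the same image, i.e. when $x' = \omega'' x$ for some $\omega'' \in W$. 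Running the computation in the proof of Lemma \ref{lm coboundary condition} with $x, x'$ both fixed by $\omega\dot{\tau}$ (rather than comparing different fixed loci), one finds that $\omega'' x$ is again fixed by $\omega\dot{\tau}$ precisely when $\omega'' \omega \sigma_\epsilon(\omega'')^{-1} = \omega$, that is, when $\omega''$ lies in the $\sigma_\epsilon$-twisted centralizer of $\omega$ in $W$. The main step is then to recognize this twisted centralizer as (the image in $W$ of) $N_W(T^{\omega\sigma_\epsilon})$: an element $\omega'' \in W$ satisfies $\omega''\,\omega\sigma_\epsilon = \omega\sigma_\epsilon\,\omega''$ as automorphisms of $\Lambda_T$ (equivalently of $T$) if and only if conjugation by $\omega''$ preserves the fixed subtorus $T^{\omega\sigma_\epsilon}$, i.e. $\omega'' \in N_W(T^{\omega\sigma_\epsilon})$; here one uses that $T^{\omega\sigma_\epsilon}$ determines $\omega\sigma_\epsilon$ on $\Lambda_T\otimes\RR$ up to the relevant finite data, and that a Weyl element commuting with an involution permutes its fixed points. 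This gives \eqref{eq B^tau_omega =  dotB^tau_omega quotiented by Z_W^tau}.

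For the dimension formula \eqref{eq dimension of B^tau_omega}, since $N_W(T^{\omega\sigma_\epsilon})$ is finite, $\dim(B^\tau_\omega) = \dim(\dot{B}^{\omega\dot{\tau}})$, so it suffices to compute the dimension of the fixed locus of the involution $\omega\dot\tau$ on $\dot B = A\otimes_\ZZ\Lambda_T$. The plan is to diagonalize: over $\RR$ (or on the Lie-algebra/tangent level, using that $A$ is a complex abelian group so $A\otimes_\ZZ\Lambda_T$ has a tangent space $\cong \CC\otimes_\ZZ\Lambda_T$ carrying a compatible real structure coming from $\sigma_\epsilon$), the linearization of $\omega\dot\tau$ is an involution, hence splits the space into $(+1)$- and $(-1)$-eigenspaces. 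Because $\dot\tau$ interchanges a complex structure with its conjugate in the anti-holomorphic case (and the $t_i$ contribute no dimension, being order-two automorphisms of $A$ with at most finite kernel on the relevant component), the two eigenspaces are exchanged by the residual $\CC$-scaling, forcing them to have equal real dimension; more precisely, writing $V = \CC\otimes_\ZZ\Lambda_T$ with the conjugation $v \mapsto \overline{\omega\sigma_\epsilon}(v)$, the real fixed locus has real dimension $\dim_\RR V / 2 = \dim_\CC V$ — but one must correct this by the order of $\omega\sigma_\epsilon(\omega) = (\omega\sigma_\epsilon)^2$ acting on $\Lambda_T$, since when this has order $>1$ the operator $\omega\dot\tau$ is not a genuine conjugation on all of $V$ but only on the part where $(\omega\sigma_\epsilon)^2$ acts trivially. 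Carrying out the eigenspace bookkeeping on the $(\omega\sigma_\epsilon)$-isotypic decomposition of $V$ yields the factor $1/\ord(\omega\sigma_\epsilon(\omega))$.

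The main obstacle I anticipate is the precise identification of the twisted centralizer with $N_W(T^{\omega\sigma_\epsilon})$ in the anti-holomorphic case $\epsilon = -$: there $\sigma_\epsilon$ acts on $\Lambda_T$ through the composite \eqref{eq definition of sigma_- on Lambda}, so one must be careful that "$\omega''$ commutes with $\omega\sigma_-$" is the correct condition and that it is equivalent to normalizing the fixed subtorus — keeping track of the interplay between the honest Weyl-group action and the twist by $\sigma_{\U(1)}$, and checking that no extra connected-component issues arise for $T^{\omega\sigma_\epsilon}$. A secondary subtlety is making the dimension count in \eqref{eq dimension of B^tau_omega} rigorous when $A$ is a general complex abelian group rather than $\CC^*$ or $T^*\widehat X$: one reduces to the tangent space at a fixed point (all components of $\dot B^{\omega\dot\tau}$ have the same dimension since $\dot B^{\omega\dot\tau}$ is a subgroup-torsor), where the computation becomes the linear-algebraic eigenvalue count sketched above. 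Once these two points are settled, assembling the union $(A\otimes_\ZZ\Lambda_T/W)^\tau = \bigcup_{\overline\omega} (A\otimes_\ZZ\Lambda_T/W)^\tau_\omega$ from Lemma \ref{lm coboundary condition} completes the description.
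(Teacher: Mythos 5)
Your proposal follows essentially the same route as the paper: you obtain the first isomorphism by identifying the stabiliser of $\dot{B}^{\omega\dot{\tau}}$ with the $\sigma_{\epsilon}$-twisted centraliser of $\omega$ and recognising that as $N_W(T^{\omega\sigma_{\epsilon}})$ (this is exactly the paper's claim \eqref{eq N_W T}), and you get the dimension formula from the finiteness of $p$ together with $(\omega\dot{\tau})^2=\omega\sigma_{\epsilon}(\omega)$. Your eigenspace bookkeeping is just a spelled-out version of the paper's assertion that the fixed locus of the analytic automorphism $\omega\dot{\tau}$ has dimension $\dim(\dot{B})/\ord(\omega\dot{\tau})$, so there is no substantive difference in method.
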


\begin{proof}
We claim that
\begin{equation} \label{eq N_W T}
N_W(T^{\omega \sigma_{\epsilon}}) \,=\, \{ \omega' \,\in\, W \,\mid\, \omega
\,= \,\omega' \omega \sigma_{\epsilon}(\omega')^{-1} \}.
\end{equation}
To prove this claim, first recall that 
\[
N_W(T^{\omega \sigma_{\epsilon}}) \,=\, \{ \omega' \in W\,\mid\,
\omega'(T^{\omega\sigma_{\epsilon}}) = T^{\omega \sigma_{\epsilon}} \}\, .
\]
Now note that
\[
\omega'(T^{\omega \sigma_{\epsilon}}) \,=\, T^{\omega'\omega\circ \sigma_{\epsilon}\circ (\omega')^{-1}}
\,=\, T^{\omega' \omega \sigma_{\epsilon}(\omega')^{-1} \circ \sigma_{\epsilon}}\, .
\]
We have $T^{\omega' \omega \sigma_{\epsilon}(\omega')^{-1} \circ \sigma_{\epsilon}}
\,=\, T^{\omega\sigma_{\epsilon}}$ if and only if
$\omega' \omega \sigma_{\epsilon}(\omega')^{-1} \,= \,\omega$. This proved the claim.

The first statement in the proposition follows from the combination of
Lemma \ref{lm coboundary condition} and the above claim.

To prove the second statement, since $\dot{\tau}$ is an analytic homomorphism, so is
$\omega\dot{\tau}$. We assume that they are all nontrivial (so we exclude the case of
$\dot{\tau}$ being an element of $W$). Since $p$ is the quotient by a finite subgroup,
one has
\[
\dim((A \otimes_\ZZ \Lambda_T/W)^\tau_\omega) \,= \,
\dim((A \otimes_\ZZ \Lambda_T)^{\omega \dot{\tau}})\, ,
\]
and therefore
\[
\dim((A \otimes_\ZZ \Lambda_T/W)^\tau_\omega) \,=\, (\dim(A \otimes_\ZZ \Lambda_T))/
\ord(\omega \dot{\tau})\, .
\]
Note that
\begin{equation} \label{eq omega tau^2}
\omega \dot{\tau} \omega \dot{\tau} \,=\, \omega \sigma_{\epsilon}(\omega)\dot{\tau}^2
\,=\, \omega \sigma_{\epsilon}(\omega)\, , 
\end{equation}
so $\ord(\omega \dot{\tau}) \,=\, 2 \cdot \ord(\omega \sigma_{\epsilon}(\omega))$.
\end{proof}

We now proceed to describe the fixed locus $\dot{B}^{\omega \sigma_{\epsilon}}$. Using the
basis $\{ \lambda_1, \cdots, \lambda_s \}$ of $\Lambda$ one gets an isomorphism
between $\dot{B}$ and
$\overset{s-\text{times}}{\overbrace{A \times \cdots \times A}}$. 
Denote by $M \in \GL(s,\ZZ)$ the matrix of $\omega \sigma_{\epsilon}$ in this base. We denote by $\overline{t} = (t_1, \cdots, t_s)$ the involution on $A^s$ given by the $t_i$. We observe that $\omega \dot{\tau}$ corresponds with $M \circ \overline{t}$. It is then clear that
\begin{equation} \label{eq description of dotB^omega sigma}
(A \otimes_\ZZ \Lambda_T)^{\omega \sigma_{\epsilon}}\,
\cong\, \left(\overset{s-\text{times}}{\overbrace{A \times \cdots \times A}} \right )^{M \circ \overline{t}}.
\end{equation}

The following is a consequence of Propositions \ref{pr description of the I and J in therms of i and j} and \ref{pr descrition of B^tau_omega}, Lemma \ref{lm description of the i} and Remarks \ref{rm homogenicity_+}, \ref{rm homogenicity_-} and \ref{rm description of the j}.

\begin{corollary} \label{co description of the fixed locus of I and J}
Let $t_y \circ \alpha_{(\epsilon, a)}$ be a holomorphic (respectively, anti-holomorphic) involution on $X$ and $\sigma_\epsilon$ a holomorphic (respectively, anti-holomorphic) involution of the complex reductive Lie group $G$. The fixed locus for the involution $I(t_y \circ \alpha_{(\epsilon, a)},\sigma_{\epsilon},\pm, \Ff)$ of $\Mm(G)$ is the union 
\[
\Mm(G)^{I(t_y \circ \alpha_{(\epsilon, a)},\sigma_{\epsilon},\pm, \Ff)} \,=\, \bigcup_{\overline{\omega} \in
W/_{\sigma_{\epsilon}} W} \left( T^*\widehat{X} \otimes_\ZZ \Lambda_T \right)^{\omega
\left ( \dot{I}(\alpha_{(\epsilon, a)},\sigma_{\epsilon},\pm, \Ff) \right)}/N_W(T^{\omega \sigma_{\epsilon}})\, , 
\]
where $\omega$ is a representative of $\overline{\omega}\,\in\, W/_{\sigma_{\epsilon}} W$.
Furthermore, if $M_\omega \,\in\, \GL(s, \ZZ)$ is the automorphism
$\omega \sigma_{\epsilon}(\omega)$ expressed in a certain basis of $\Lambda_T$, and if
$y_1, \cdots, y_s \,\in\, T^*\widehat{X}$ are the coordinates of $\Ff \,\in\,
T^*\widehat{X} \otimes_\ZZ \Lambda_T$ in this basis, then
\[
\left( T^*\widehat{X} \otimes_\ZZ \Lambda_T \right)^{\omega
\left (\dot{I}(\alpha_{(\epsilon, a)},\sigma_{\epsilon},\pm, \Ff) \right)}
\,\cong\, \left(\overset{s-{\rm times}}{\overbrace{T^*\widehat{X} \times {\cdots} \times
T^*\widehat{X}}} \right)^{M_\omega \circ \overline{t}}\, ,
\]
where
\[
\overline{t} \,=\, \left((\nu_{y_1} \circ \alpha_{(+,a)}, \pm \id), \cdots ,
(\nu_{y_s} \circ \alpha_{(+,a)} , \pm \id) \right)
\]
if $\epsilon\,=\, +$, and 
\[
\overline{t} \,=\, \left( (\nu_{y_1} \circ \alpha_{(-,-a)} , \mp \conj), \cdots,
(\nu_{y_s} \circ \alpha_{(-,-a)} , \mp \conj) \right)
\]
if $\epsilon \,=\, -$.

Analogously, the fixed locus for the involution $J(t_y \circ \alpha_{(\epsilon, a)},\sigma_{\epsilon},\pm, \chi)$ of
$\Rr(G)$ is the union 
\[
\Rr(G)^{I(t_y \circ \alpha_{(\epsilon, a)},\sigma_{\epsilon},\pm, \chi)} \,=\, \bigcup_{\overline{\omega} \in W/_{\sigma_{\epsilon}} W}
 \left( \Hom(\pi_1(X), \,\CC^*) \otimes_\ZZ \Lambda_T \right)^{\omega \left
( \dot{J}(\alpha_{(\epsilon, a)},\sigma_{\epsilon},\pm, \chi) \right)}/N_W(T^{\omega \sigma_{+}})\, . 
\]
If the homomorphisms $\chi_i \,\in\, \Hom(\pi_1(X),\, \CC^*)$ are the coordinates of
$\chi \,\in\, \Hom(\pi_1(X), \,\CC^*) \otimes_\ZZ \Lambda_T$ in the basis, and if
$b_{i,1} \,:=\, \chi_i(\delta_1)$ and $b_{i,2}\,:=\, \chi_i(\delta_2)$ are the images
of the generators $\delta_1, \delta_2$ of $\pi_1(X)$, then
\[
\left( \Hom(\pi_1(X), \,\CC^*) \otimes_\ZZ \Lambda_T \right)^{\omega \left(
\dot{J}(\alpha_{(\epsilon, a)},\sigma_{\epsilon},\pm, \chi) \right)} \,\cong \,\left ((\CC^* \times \CC^*)
\times \stackrel{s}{\cdots} \times (\CC^* \times \CC^*) \right)^{M_\omega
\circ \overline{t}}\, ,
\]
where
\[
\overline{t} \,=\, \left( f^{\pm, (b_{1,1}, b_{1,2})}_{(\epsilon,a,R)}, \cdots , f^{\pm,
(b_{s,1}, b_{s,2})}_{(\epsilon,a,R)} \right )
\]
with $R$ being the region of $\Omega$ where $X$ lies. 
\end{corollary}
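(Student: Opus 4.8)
The plan is to assemble the corollary from the pieces already in place, after first normalizing the involution on the curve. By Remarks \ref{rm homogenicity_+} and \ref{rm homogenicity_-}, the homogeneity of topologically trivial semistable principal $G$-bundles on an elliptic curve gives $I(t_y\circ\alpha_{(\epsilon,a)},\sigma_\epsilon,\pm,\Ff)=I(\alpha_{(\epsilon,a)},\sigma_\epsilon,\pm,\Ff)$ and, on the Betti side, $J(t_y\circ\alpha_{(\epsilon,a)},\sigma_\epsilon,\pm,\chi)=J(\alpha_{(\epsilon,a)},\sigma_\epsilon,\pm,\chi)$, so it suffices to describe the fixed loci of the untranslated involutions. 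Then the commutativity of \eqref{eq i commutes with xi} (and of its analogue for $\dot J$), together with the isomorphisms $\xi$ and $\zeta$ of Theorem \ref{tm HB on EC}, transports the whole problem to the quotients $(T^*\widehat{X}\otimes_\ZZ\Lambda_T)/W$ and $(\Hom(\pi_1(X),\CC^*)\otimes_\ZZ\Lambda_T)/W$, on which the involutions are the ones induced by $\dot I(\alpha_{(\epsilon,a)},\sigma_\epsilon,\pm,\Ff)$ and $\dot J(\alpha_{(\epsilon,a)},\sigma_\epsilon,\pm,\chi)$.

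The second step is to recognize these induced involutions as instances of the abstract involution $\dot\tau$ of Section \ref{sc description of fixed loci}. Writing $\Ff=\sum[\Ff_i^\epsilon]\otimes\sigma_\epsilon(\lambda_i)$ and $\chi=\sum[\chi_i^\epsilon]\otimes\sigma_\epsilon(\lambda_i)$ as in \eqref{eq condition on Jj_i} and \eqref{eq condition on chi_i}, Proposition \ref{pr description of the I and J in therms of i and j} expresses $\dot I(\alpha_{(\epsilon,a)},\sigma_\epsilon,\pm,\Ff)$ and $\dot J(\alpha_{(\epsilon,a)},\sigma_\epsilon,\pm,\chi)$ coordinatewise through the rank-one maps $i(\alpha_{(\epsilon,a)},\pm,\Ff_i^\epsilon)$ and $j(\alpha_{(\epsilon,a)},\pm,\chi_i^\epsilon)$; that is, they are exactly of the form $\sum a_i\otimes\lambda_i\mapsto\sum t_i(a_i)\otimes\sigma_\epsilon(\lambda_i)$ with $A=T^*\widehat{X}$, respectively $A=\Hom(\pi_1(X),\CC^*)\cong\CC^*\times\CC^*$ (via $q$), and $t_i$ the corresponding rank-one involution, whose order-two property is precisely \eqref{eq involution condition imath} for $\Ff$ and \eqref{eq involution condition jmath} for $\chi$. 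Proposition \ref{pr descrition of B^tau_omega} then produces the decomposition of the fixed locus indexed by $\overline{\omega}\in W /_{\sigma_{\epsilon}} W$, with pieces $(A\otimes_\ZZ\Lambda_T)^{\omega\dot\tau}/N_W(T^{\omega\sigma_\epsilon})$, and \eqref{eq description of dotB^omega sigma} identifies $(A\otimes_\ZZ\Lambda_T)^{\omega\dot\tau}$ with $(A\times\cdots\times A)^{M_\omega\circ\overline{t}}$ in the $s$-fold product once a basis of $\Lambda_T$ is fixed. Finally, to read off $\overline{t}$ explicitly, one substitutes Lemma \ref{lm description of the i} in the Higgs case, namely $i(\alpha_{(+,a)},\pm,\Ff_i^+)=p\circ(t_{y_i}\circ\alpha_{(+,a)},\pm\id)\circ p^{-1}$ and $i(\alpha_{(-,a)},\pm,\Ff_i^-)=p\circ(t_{y_i}\circ\alpha_{(-,-a)},\mp\conj)\circ p^{-1}$, and Remark \ref{rm description of the j} in the Betti case, namely $q\circ j(\alpha_{(\epsilon,a)},\pm,\chi_i)\circ q^{-1}=f^{\pm,(b_{i,1},b_{i,2})}_{(\epsilon,a,R)}$ with $R$ the region of $\Omega$ containing $X$. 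This yields precisely the asserted $\overline{t}$ in the two cases, and the dimension of each nonempty component follows from \eqref{eq dimension of B^tau_omega}.

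The only point that I would actually write out, and that I expect to be the main obstacle, is that the rank-one maps $t_i=i(\alpha_{(\epsilon,a)},\pm,\Ff_i^\epsilon)$ are \emph{affine} rather than linear: they carry the translation by $y_i$ coming from the $Z$-twist $\Ff$. One must therefore check that the arguments of Section \ref{sc description of fixed loci}, in particular Lemma \ref{lm coboundary condition} and Proposition \ref{pr descrition of B^tau_omega}, used only that $\dot\tau$ is an analytic involution which is $\sigma_\epsilon$-equivariant for the $W$-action, and not that the $t_i$ are group homomorphisms. Once this is granted, each $(A\otimes_\ZZ\Lambda_T)^{\omega\dot\tau}$ is still a (possibly empty) closed analytic coset, the projection $p$ is finite, and the corollary is a formal consequence of the quoted results; I anticipate this verification — not any new computation — to be the substantive part of the proof.
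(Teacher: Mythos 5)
Your proposal is correct and follows essentially the same route as the paper, which proves the corollary simply by citing Propositions \ref{pr description of the I and J in therms of i and j} and \ref{pr descrition of B^tau_omega}, Lemma \ref{lm description of the i} and Remarks \ref{rm homogenicity_+}, \ref{rm homogenicity_-} and \ref{rm description of the j}, assembled in exactly the order you use them. Your closing observation --- that the $t_i$ are affine rather than group homomorphisms, so one must check that Lemma \ref{lm coboundary condition} and Proposition \ref{pr descrition of B^tau_omega} use only the $W$-equivariance and analyticity of $\dot{\tau}$ --- is a genuine point the paper glosses over, and it does go through because the shift $[\Ff]$ lies in $T^*\widehat{X} \otimes_\ZZ \Lambda_Z$, on which $W$ acts trivially.
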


It is straight-forward to check that the subset of $W$ given by the elements of the
form $\omega \sigma_{\epsilon}(\omega)$ is closed under the usual adjoint action of $W$. We define the quotient
\[
\Upsilon_{\sigma_{\epsilon}} \,=\, \{ \omega \sigma_{\epsilon}(\omega) \ \mid \ \omega \in W \}/{\ad(W)}\, .
\]
Define the map
\[
\delta\, :\, W /_{\sigma_{\epsilon}} W\, \longrightarrow\, \Upsilon_{\sigma_{\epsilon}}\, ,\ \
[\omega]_{\ad_{\sigma_{\epsilon}}}\,\longmapsto\, [\omega\sigma_{\epsilon}(\omega)]_\ad\, .
\]
In view of \eqref{eq dimension of B^tau_omega}, the dimension of $B^\tau_\omega$ is determined by $\delta([\omega]_{\ad_{\sigma_{\epsilon}}})$.

Following \eqref{eq dimension of B^tau_omega}, we say that a
component $B^\tau_\omega$ is {\it maximal} is the order of $\omega \dot{\tau}$ is $2$. It is clear that the dimension of each maximal component is half the dimension of $B$. We see that a component is maximal when
\begin{equation} \label{eq cocycle condition}
\id_W \,=\, \omega \sigma_{\epsilon}(\omega)\, . 
\end{equation}
We define $B^{\tau, \id}$ to be the union of all maximal components.

Consider the non-abelian group cohomology associated to the action of $\ZZ/2\ZZ$ on $W$
given by $\sigma_{\epsilon}$. Note that the cocycle condition is
precisely \eqref{eq cocycle condition}, that is, the condition for $B^\tau_\omega$ to
be maximal. Also, the coboundary condition is precisely
\eqref{eq coboundary condition}. These tell us that maximal components
are indexed by $H^1(\sigma_{\epsilon},\, W)$,
\begin{equation} \label{eq decomposition of B^tau_max}
(A \otimes_\ZZ \Lambda_T/W)^{\tau, \id} \,= \,\bigcup_{\overline{\omega} \in H^1(\sigma_{\epsilon},W)} (A \otimes_\ZZ \Lambda_T/W)^\tau_{\overline{\omega}}.
\end{equation}

Analogously, for a fixed representative of each $\gamma \,\in \,\Upsilon_{\sigma_{\epsilon}}$ one
can define a $\gamma$-shifted cocycle condition
\begin{equation} \label{eq shifted cocycle condition}
\gamma \,= \,\omega \sigma_{\epsilon}(\omega)\, . 
\end{equation}
We also consider the union of all the components with a fixed $\gamma
\,\in\, \Upsilon_{\sigma_{\epsilon}}$,
\[
(A \otimes_\ZZ \Lambda_T/W)^{\tau,\gamma} \,:=\, \bigcup_{\delta([\omega]) = \gamma} (A \otimes_\ZZ \Lambda_T/W)^\tau_\omega\, ;
\]
it is clear that
\[
(A \otimes_\ZZ \Lambda_T/W)^\tau \,= \,
\bigcup_{\gamma \in \Upsilon_{\sigma_{\epsilon}}} (A \otimes_\ZZ \Lambda_T/W)^{\tau,\gamma}\, .
\]

Taking the coboundary condition \eqref{eq coboundary condition} as before, we can define the shifted non-abelian group cohomology $H^1_\gamma(\sigma_{\epsilon},\,W)$. We see that 
\begin{equation} \label{eq decomposition of B^tau_gamma}
(A \otimes_\ZZ \Lambda_T/W)^{\tau, \gamma} \,=\,
 \bigcup_{\overline{\omega} \in H_\gamma^1(\sigma_{\epsilon},W)} (A \otimes_\ZZ \Lambda_T/W)^\tau_{\overline{\omega}}\, .
\end{equation}

One can easily see that every point $b \,\in\, \dot{B}$ such that
\[
b \,=\, \omega \dot{\tau} (b)
\]
actually satisfies the condition 
\[
b \,=\, \omega \dot{\tau} \omega \dot{\tau} (b)\, .
\]
Recalling \eqref{eq omega tau^2}, we observe that $b$ is fixed by
$\omega \sigma_{\epsilon}(\omega)$, and this implies that
\begin{equation} \label{eq non maximal contained in sing}
\dot{B}^{\omega \dot{\tau}} \,\subset\, \dot{B}^{\omega \sigma_{\epsilon}(\omega)}\, .
\end{equation}

\begin{remark}
Note that if $\dim(A) \,\geq\, 2$, then the singular locus of $B = \dot{B}/W$ is
\[
\Sing(B) \,=\, \bigcup_{\id \neq \omega \in W} p(\dot{B}^{\omega})\, .
\]
An antiholomorphic involution on a complex manifold has a half
dimensional fixed point locus. In our case, non-maximal components
$B^\tau_\omega \,\in\, B^{\tau,\gamma}$ have dimension less than $\frac{1}{2}
\dim(B)$. Although, \eqref{eq non maximal contained in sing} implies that any non-maximal
component satisfies $B^\tau_\omega \,\subset\, B^{\tau,\gamma}$, and therefore we have
$B^\tau_\omega \,\subset\,\Sing(B)$. 
\end{remark}

\subsection{Moduli spaces of pseudo-real Higgs bundles}
\label{sc moduli of pseudo-real}

Every element $z \,\in\, Z_2$ of order $2$ of the center defines an element of the Weyl group $\omega_z$ in the following way (see for instance \cite{friedman&morgan, FGN}). Take an alcove $A \subset \tT$ containing the origin. We know (see for instance \cite{broker&tomDieck}) that there is a vertex $a_z$ of the alcove $A$ such that $z = \exp(a_z)$. We see that $A - a_z$ is
another alcove containing the origin. Hence there is a unique element $\omega_z \,\in\,
W$ such that
\[
A - a_z \,=\, \omega_z(A)\, .
\]
In the trivial case we obviously have $\omega_0 \,=\, \id$. 

\begin{remark} \label{rm omega_z = acting by z}
Note that the action of $\omega_z$ on $T$ coincides with the action of $z$ on it.
\end{remark} 

Now we can provide a description of the moduli spaces of pseudo-real $G$-Higgs bundles.

\begin{theorem}
Take the central element $z \in Z^{\sigma_{-}}_2$, the antiholomorphic involution
$$t_y \circ \alpha_{(-,a)} \,: \, X \,\longrightarrow\, X\, ,$$ and the pair of holomorphic and anti-holomorphic involutions $\sigma_+, \sigma_{-} \,:\, G \,\longrightarrow\, G$ such that $\sigma_+ = \sigma_- \sigma_K$. The image of the moduli space of $(G,\alpha_{(-,a)},\sigma_{-},\pm,z)$-Higgs bundles under the forgetful morphism is
\[
\widetilde{\Mm}(G,t_y \circ \alpha_{(-, a)},\sigma_{-},\pm,z)\,\,=\, \bigcup_{\overline{\omega} \in H_{\omega_z}^1(\sigma_{-},W)} \left( T^*\widehat{X} \otimes_\ZZ \Lambda_T \right)^{\omega \left ( \dot{I}(\alpha_{(-, a)},\sigma_{-},\pm) \right)}/N_W(T^{\omega \sigma_{-}})\, .
\]
Furthermore, the forgetful morphism
\[
\Mm(G,t_y \circ \alpha_{(-,a)},\sigma_{-},\pm,z)\, \longrightarrow \widetilde{\Mm}(G,t_y \circ \alpha_{(-,a)},\sigma_{-},\pm,z)\,
\]
is bijective. Analogously, the image of the moduli space of representations is 
\[
\widetilde{\Rr}(G,t_y \circ \alpha_{(-, a)},\sigma_{-},\pm,z)\,\,=\, \bigcup_{\overline{\omega} \in H_{\omega_z}^1(\sigma_{-},W)} \left( \Hom(\pi_1(X), \,\CC^*) \otimes_\ZZ \Lambda_T \right)^{\omega \left ( \dot{J}(\alpha_{(-, a)},\sigma_{-},\pm) \right)}/N_W(T^{\omega \sigma_{-}})\, ,
\]
and the forgetful morphism
\[
\Rr(G,t_y \circ \alpha_{(-,a)},\sigma_{-},\pm,z)\, \longrightarrow \widetilde{\Rr}(G,t_y \circ \alpha_{(-,a)},\sigma_{-},\pm,z)\,
\]
is bijective as well.
\end{theorem}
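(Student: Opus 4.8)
The plan is to assemble the statement from the explicit descriptions already established, together with the identification of pseudo-real Higgs bundles with fixed points of $I(\alpha_{(-,a)},\sigma_{-},\pm)$ refined by a central element $z$. First I would invoke Theorem \ref{th puntos fijos}, which gives the inclusion $\bigcup_{z} \widetilde{\Mm}(G,t_y\circ\alpha_{(-,a)},\sigma_{-},\pm,z) \subset \Mm(G)^{I(t_y\circ\alpha_{(-,a)},\sigma_{-},\pm)}$, and recall that by Remark \ref{rm homogenicity_-} this fixed locus equals the one for $\alpha_{(-,a)}$ with $\Ff$ trivial. By Corollary \ref{co description of the fixed locus of I and J} (with $\Ff$ trivial, so $\overline{t}$ has the form involving $\nu_{0}\circ\alpha_{(-,-a)}$ and $\mp\conj$), this fixed locus decomposes as $\bigcup_{\overline{\omega}\in W/_{\sigma_{-}}W} (T^*\widehat{X}\otimes_\ZZ\Lambda_T)^{\omega(\dot{I}(\alpha_{(-,a)},\sigma_{-},\pm))}/N_W(T^{\omega\sigma_{-}})$. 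So the content of the theorem is that passing from the fixed points of the involution to the image of the forgetful map from pseudo-real bundles with a \emph{fixed} $z$ amounts to restricting the index set from $W/_{\sigma_{-}}W$ to the subset of $\sigma_{-}$-conjugacy classes $\overline{\omega}$ with $\delta(\overline{\omega}) = [\omega_z]$, i.e. to the shifted cohomology $H^1_{\omega_z}(\sigma_{-},W)$.

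The key step is therefore to match the obstruction $\theta\circ\alpha_{-}^*\sigma_{-}(\theta) = z\cdot\id_E$ from \eqref{eq theta^2 = z} with the shifted cocycle condition \eqref{eq shifted cocycle condition}. Concretely: a point of $\Mm(G)$ fixed by $I(\alpha_{(-,a)},\sigma_{-},\pm)$ lifts to a pseudo-real structure $\theta$, and by \eqref{eq theta} and \eqref{eq theta^2 = z} the square of that lift is multiplication by a central element; one must show this central element is $z$ precisely when the corresponding component of the fixed locus is $B^\tau_\omega$ with $\omega\sigma_{-}(\omega) = \omega_z$ in $\Upsilon_{\sigma_{-}}$. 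Here I would use Remark \ref{rm omega_z = acting by z}, that $\omega_z$ acts on $T$ exactly as $z$ does: under the identification $\Mm(G)\cong (T^*\widehat X\otimes_\ZZ\Lambda_T)/W$ of Theorem \ref{tm HB on EC}, the element of $\zZ$ recording $\theta^2$ is read off from the Weyl element $\omega$ labelling the component via $\omega\sigma_{-}(\omega)$, and \eqref{eq omega tau^2} identifies $(\omega\dot\tau)^2$ with $\omega\sigma_{-}(\omega)$ acting on $\dot B = T^*\widehat X\otimes_\ZZ\Lambda_T$. This gives the $z$-graded refinement \eqref{eq decomposition of B^tau_gamma} with $\gamma = \omega_z$, which is exactly the asserted union over $H^1_{\omega_z}(\sigma_{-},W)$. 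The same computation, run through $\dot{J}(\alpha_{(-,a)},\sigma_{-},\pm)$ and the description of $\Rr(G)\cong(\Hom(\pi_1(X),\CC^*)\otimes_\ZZ\Lambda_T)/W$, gives the representation-variety statement; the non-abelian Hodge homeomorphism of Theorem \ref{ths} together with the commuting square \eqref{eq HK commutes with xi and zeta} guarantees the two descriptions correspond.

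For the bijectivity of the forgetful morphism $\Mm(G,t_y\circ\alpha_{(-,a)},\sigma_{-},\pm,z)\to\widetilde{\Mm}(G,t_y\circ\alpha_{(-,a)},\sigma_{-},\pm,z)$, surjectivity is automatic by definition of the image, so the point is injectivity: two pseudo-real structures $\theta_1,\theta_2$ on the same polystable $(E,\varphi)$, both with square $z\cdot\id_E$, differing by \eqref{eq isomorphic pseudo-real}, must be related by an automorphism $f$ of $(E,\varphi)$; over an elliptic curve the automorphism group of a polystable $G$-Higgs bundle is, up to the relevant torus/Weyl combinatorics, abelian, so the cocycle $f\theta_1\alpha_{-}^*\sigma_{-}(f)^{-1}\theta_2^{-1}$ being central can be trivialized, and one checks the ambiguity in the lift $\theta$ of a given fixed point is absorbed by isomorphism of pseudo-real bundles. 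I expect the main obstacle to be exactly this last point — controlling the lift $\theta$ and its isomorphism class in terms of the component index $\overline{\omega}$, i.e. verifying that each component $B^\tau_\omega$ with $\delta(\overline\omega) = [\omega_z]$ is hit by a genuine pseudo-real bundle with square exactly $z$ (not merely conjugate to $z$) and that distinct lifts over the same point are identified — since this requires pinning down the central scalar, not just its conjugacy class, and invoking the explicit structure of polystable objects over $X$ from \cite{FGN} to rule out spurious extra identifications or obstructions.
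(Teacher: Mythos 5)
Your proposal follows essentially the same route as the paper: reduce to $\alpha_{(-,a)}$ via Remark \ref{rm homogenicity_-}, use Theorem \ref{th puntos fijos} and Corollary \ref{co description of the fixed locus of I and J} to sit inside the decomposed fixed locus, and then match the central obstruction $\theta\circ\alpha_{-}^{*}\sigma_{-}(\theta)=z\cdot\id_E$ of \eqref{eq theta^2 = z} with the shifted cocycle condition $\omega\sigma_{-}(\omega)=\omega_z$ via Remark \ref{rm omega_z = acting by z} and \eqref{eq decomposition of B^tau_gamma}. The "main obstacle" you flag for bijectivity is resolved in the paper by the short observation that, by \eqref{eq isomorphic pseudo-real} and \eqref{eq N_W T}, isomorphisms of pseudo-real Higgs bundles parametrized by $T^{*}\widehat{X}\otimes_{\ZZ}\Lambda_T$ are exactly the elements of $N_W(T^{\omega\sigma_{-}})$, which is precisely the group already quotiented out in each component, with the representation-space statements then following from \cite[Theorems 4.5 and 4.8]{BGP}.
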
 

\begin{proof}
In view of Remark \ref{rm homogenicity_-}, without any loss of generality,
we can take $\alpha_{(\epsilon, a)}$ to be our anti-holomorphic involution. From
Theorem \ref{th puntos fijos} we have that
$\widetilde{\Mm}(G,\alpha_{(-, a)},\sigma_{-},\pm,z)$ lies in the fixed-point locus
of $I(\alpha_{(-, a)},\sigma_{-},\pm)$. In fact, recalling Remark
\ref{rm omega_z = acting by z} and the equivalent definition of a pseudo-real
$(G,\alpha_{(-,a)},\sigma_{-},\pm,z)$-Higgs bundles given in \eqref{eq theta} and
\eqref{eq theta^2 = z}, we know that the points lying in
$\widetilde{\Mm}(G,\alpha_{(-, a)},\sigma_{-},\pm,z)$ are those lying in the components $(T^*\widehat{X} \otimes_\ZZ \Lambda_T / W)^{\dot{I}(\alpha_{(-, a)},\sigma_{-},\pm)}_{\omega}$ where
\[
\omega \sigma_{-}(\omega) = \omega_c.
\]
From this and \eqref{eq decomposition of B^tau_gamma}, we see that
$\widetilde{\Mm}(G,\alpha_{(-, a)},\sigma_{-},\pm,z)$ is given by the union of the
components of $H_{\omega_z}^1(\sigma_{-},W)$, and the first statement follows from Corollary \ref{co description of the fixed locus of I and J}.

Recall the definition of an isomorphism of pseudo-real Higgs bundles given in 
\ref{eq isomorphic pseudo-real} and the description of the group $N_W(T^{\omega 
\sigma_{-}})$ given in \eqref{eq N_W T}. Then, every isomorphism between pseudo-real 
$G$-Higgs bundles parametrized by $T^*\widehat{X} \otimes_\ZZ \Lambda_T$ is given by the 
elements of $N_W(T^{\omega \sigma_{-}})$. The second statement follows from this 
observation and the description of $\widetilde{\Mm}(G,\alpha_{(-, 
a)},\sigma_{-},\pm,z)$ given above.

Finally, the third an fourth statements follow from the previous description and \cite[Theo\-rems 4.5 and 4.8]{BGP}.
\end{proof}

Recall that for $z \,=\, \id_G$, one has $\omega_z \,=\, \id_W$, and therefore the moduli 
space of real $(G, \alpha_{-}, \sigma_{-}, \pm)$-Higgs bundles (corresponding to $z \,=\, 
\id_G$) is the union of the components classified by $H^1(\sigma_{-}, W)$.

\section*{Tables}

\begin{table}[h]
\begin{center}
    \begin{tabular}{|c|c|c|c|}
    \hline
    Involution & Admissible translations & $X^\alpha$ & $X/\alpha$ \\ \hline
    $\alpha_{(+, 1)}$ & $-$ & $X_\gamma$ & $X_\gamma$
    \\ 
    $t_y \circ \alpha_{(+, 1)}$ & $y \in X[2], y \neq x_0$ & $\emptyset$ & $X_{(\gamma - \widetilde{y})}$
    \\ 
    $\alpha_{(+, -1)}$ & $-$ &  $X[2]$ & $\PP^1$
    \\ 
    $t_y \circ \alpha_{(+, -1)}$ & $y \in X$ &  $\frac{1}{2}y + X[2]$ & $\PP^1$
    \\ \hline
    \end{tabular}
\end{center}
\caption{Holomorphic involutions of $X_\gamma$.}
\label{tb holomorphic involutions}
\end{table}

\begin{table}[h]
\begin{center}
    \begin{tabular}{|c|c|c|c|}
    
    \hline
    
    Involution & $\left ( (\alpha_{\alpha_{\epsilon}})_*\delta_1, (\alpha_{(\epsilon,a}))_*\delta_2 \right )$  \\ \hline
    
    $\alpha_{(+,1)}$ & $(\delta_1,\delta_2)$ \\
    
    $\alpha_{(+,-1)}$ & $(-\delta_1,-\delta_2)$ \\ \hline    
   \end{tabular}
\caption{Action of $\alpha_{(+,a)}$ on $\pi_1(X_\gamma)$.}
\label{tb action of alpha on pi_1 hol}
\end{center}
\end{table}

\begin{table}[h!]
\begin{center}
    \begin{tabular}{|c|c|c|}
    
    \hline
    
     Topological type & $X^{\alpha_-}$ & $X/\alpha_-$ \\ \hline
       $(0,1)$ & $\emptyset$ & Klein bottle \\ 
       $(1,1)$  & $S^1$& M\"obius strip \\ 
       $(2,0)$  & $S^1 \sqcup S^1$ & Closed annulus \\ \hline
   
    \end{tabular}
\caption{Topological type of real elliptic curves.}
\label{zzt}
\end{center}
\end{table} 

\begin{table}[h]
\begin{center}
\begin{tabular}{|c|c|}
\hline
Region of $\HH$ & Values of $\gamma$ 
\\ \hline
$A$ & $\{ \Im(\gamma) > 1, \Re(\gamma) = 0 \}$ 
\\  \hline
$B$ & $\{ \Im(\gamma) = 1, \Re(\gamma) = 0  \}$
\\  \hline
$C$ & $\{ 0 < \Re(\gamma) < \frac{1}{2}, \Im(\gamma) = \sqrt{1 - \Re(\gamma)}  \}$
\\  \hline
$D$ & $\{ \Im(\gamma) = \frac{\sqrt{3}}{2}, \Re(\gamma) = \frac{1}{2}  \}$
\\  \hline
$E$ & $\{ \Im(\gamma) > \frac{\sqrt{3}}{2}, \Re(\gamma) = \frac{1}{2}  \}$
\\  \hline
\end{tabular}
\end{center}
\caption{$X_\gamma$ admitting an antiholomorphic involution.}
\label{tb region of partial Omega}
\end{table} 

\begin{table}[h]
\begin{center}
    \begin{tabular}{|c|c|c|c|c|}
    \hline
    Region of $\HH$ & Involution &  Admissible translations & Topological type \\ \hline
    \multirow{2}{0cm}{$A$} & $\alpha_{(-,\pm 1)}$ & $-$ & (2,0)
    \\
    & $t_y \circ \alpha_{(-,\pm 1)}$ & $y \neq x_0, y \in X^{\alpha_{(-, \mp 1)}} \cong S^1 \sqcup S^1$  & (0,1) 
    \\ \hline
    \multirow{3}{0cm}{$B$} & $\alpha_{(-,\pm 1)}$ & $-$ & (2,0)
    \\    
    & $\alpha_{(-,\pm \i)}$ & $-$ & (1,1)
    \\
    & $t_y \circ \alpha_{(-,\pm 1)}$ &  $y \neq x_0, y \in X^{\alpha_{(-, \mp 1)}} \cong S^1 \sqcup S^1$ & (0,1) 
    \\ \hline
    \multirow{1}{0cm}{$C$} & $t_y \circ \alpha_{(-,\pm \gamma)}$ & $y \in X^{\alpha_{(-, \mp \gamma)}} \cong S^1$ & (1,1)
    \\ \hline
    \multirow{3}{0cm}{$D$} & $t_y \circ \alpha_{(-,\pm1)}$ & $y \in X^{\alpha_{(-, \mp 1)}} \cong S^1$ & (1,1)
    \\ 
    & $t_y \circ \alpha_{(-,\pm\gamma)}$ & $y \in X^{\alpha_{(-, \mp \gamma)}} \cong S^1$  & (1,1) 
    \\    
    & $t_y \circ \alpha_{(-,\pm\gamma^2)}$ & $y \in X^{\alpha_{(-, \mp \gamma^2)}} \cong S^1$ & (1,1)
    \\ \hline
    \multirow{1}{0cm}{$E$} & $t_y \circ \alpha_{(-,\pm 1)}$  & $y \in X^{\alpha_{(-, \mp 1)}} \cong S^1$ & (1,1)
    \\ \hline
    \end{tabular}
    \caption{Anti-holomorphic involutions on $X_\gamma$.}
    \label{tb all the involutions}
\end{center}
\end{table}

\newpage

\begin{table}[h]
\begin{center}
    \begin{tabular}{|c|c|c|c|}
    
    \hline
    Region of $\HH$ & Involution & $(\alpha_{(\epsilon,a)})_*(\delta_1, \delta_2)$ 
    \\ \hline
    \multirow{2}{1cm}{$A,B$} & $\alpha_{(-,1)}$ & $(\delta_1,-\delta_2)$
    \\ 
    & $\alpha_{(-,-1)}$ & $(-\delta_1,\delta_2)$ 
    \\ \hline
    \multirow{4}{1cm}{$C,D$} & $\alpha_{(-,1)}$ & $(\delta_1,-\delta_2)$
    \\
    & $\alpha_{(-,-1)}$ & $(-\delta_1,\delta_2)$ 
    \\ 
    & $\alpha_{(-,\gamma)}$ & $(\delta_2,\delta_1)$
    \\
    & $\alpha_{(-,-\gamma)}$ & $(-\delta_2,-\delta_1)$ 
    \\ \hline
    \multirow{2}{0cm}{$E$} & $\alpha_{(-,1)}$ & $(\delta_1,-\delta_2+\delta_1)$
    \\ 
    & $\alpha_{(-,-1)}$ & $(-\delta_1,\delta_2-\delta_1)$ 
    \\ \hline
    \end{tabular}
\end{center}
\caption{Action of $\alpha_{(-,a)}$ on $\pi_1(X)$.}
\label{tb action of alpha on pi_1 antihol}
\end{table}

\begin{table}[h]
\begin{center}
    \begin{tabular}{|c|c|c|c|c|}
    
    \hline
    Region of $\HH$ & Involution & $f^+_{(\epsilon,a,R)}(z_1,z_2)$ & $f^-_{(\epsilon,a,R)}(z_1,z_2)$
    \\ \hline
    \multirow{2}{0.5cm}{$\HH$} & $\alpha_{(+,1)}$  & $(z_1,z_2)$ & $(\overline{z}_1^{-1},\overline{z}_2^{-1})$
    \\ 
    & $\alpha_{(+,-1)}$  & $(z_1^{-1},z_2^{-1})$ & $(\overline{z}_1,\overline{z}_2)$  
    \\ \hline
     \multirow{2}{1cm}{$A,B$} & $\alpha_{(-,1)}$ & $(z_1,z_2^{-1})$ & $(\overline{z}_1^{-1},\overline{z}_2)$ 
    \\ 
    & $\alpha_{(-,-1)}$ & $(z_1^{-1},z_2)$ & $(\overline{z}_1,\overline{z}_2^{-1})$
    \\ \hline
    \multirow{4}{1cm}{$C,D$} & $\alpha_{(-,1)}$ & $(z_1,z_2^{-1})$ & $(\overline{z}_1^{-1},\overline{z}_2)$
    \\
    & $\alpha_{(-,-1)}$ & $(z_1^{-1},z_2)$ & $(\overline{z}_1,\overline{z}_2^{-1})$
    \\ 
    & $\alpha_{(-,\gamma)}$ & $(z_2,z_1)$ & $(\overline{z}_2^{-1},\overline{z}_1^{-1})$
    \\
    & $\alpha_{(-,-\gamma)}$ & $(z_2^{-1},z_1^{-1})$ & $(\overline{z}_2,\overline{z}_1)$
    \\ \hline
    \multirow{2}{0.5cm}{$E$} & $\alpha_{(-,1)}$ & $(z_1,z_2^{-1}z_1)$ & $(\overline{z}_1^{-1},\overline{z}_2\overline{z}_1^{-1})$
    \\ 
    & $\alpha_{(-,-1)}$& $(z_1^{-1},z_2z_1^{-1})$ & $(\overline{z}_1^{-1},\overline{z}_2^{-1}\overline{z}_1)$  
    \\ \hline
    \end{tabular}
\end{center}
\caption{Values of $f^{\pm}_{(\epsilon,a,R)}$.}
\label{tb values of f^pm}
\end{table}


\begin{thebibliography}{99999}


\bibitem{alling&greenleaf} N. Alling and N. Greenleaf, {\it Foundations of the theory 
of Klein surfaces}, Springer (1971).

\bibitem{atiyah} M. F. Atiyah, {\it Vector bundles over elliptic curves}, Proc. 
London Math. Soc. {\bf 7} (1957), 414--452.

\bibitem{BSC} D. Baraglia and L. Schaposnik, {\it Higgs bundles and (A,B,A)-branes}, Comm. Math. Phys. \textbf{331} (2014), 1271--1300.

\bibitem{baraglia&schaposnik_2} D. Baraglia and L. P. Schaposnik, {\it Real 
structures on moduli spaces of Higgs bundles}, to appear in {\it Adv. Theor. Math. Phys}.

\bibitem{BB} I. Biswas and U. N. Bhosle, {\it Stable real algebraic vector bundles 
over a Klein bottle}, Trans. Amer. Math. Soc. \textbf{360} (2008), 4569--4595.

\bibitem{BGP} I. Biswas and O. Garc\'{\i}a-Prada, {\it Anti-holomorphic involutions 
of the moduli spaces of Higgs bundles}, Jour. \'Ecole Poly. -- Math. \textbf{2} 
(2014), 35--54.

\bibitem{BGH} I. Biswas, O. Garcia-Prada and J. Hurtubise, {\it Pseudo-real principal 
Higgs bundles on compact K\"ahler manifolds}, Ann. Inst. Fourier \textbf{64} (2014), 
2527--2562.


\bibitem{BG} I. Biswas and T. L. G\'omez, {\it Connections and Higgs fields on a principal bundle}, Ann. Glob. Anal. Geom. \textbf{33} (2008), 19--46.


\bibitem{BHH} I. Biswas, J. Huisman and J. Hurtubise, {\it The moduli space of stable 
vector bundles over a real algebraic curve}, Math. Ann. \textbf{347} (2010), 201--233.

\bibitem{BS} I. Biswas and F. Schaffhauser, {\it Vector bundles over a real elliptic 
curve}, Pacific Jour. Math. {\bf 283} (2016), 43--62.

\bibitem{BU} E. Bujalance, J. J. Etayo, J. M. Gamboa and G. Gromadzki, {\it 
Automorphism groups of compact bordered Klein surfaces. A combinatorial approach}, 
Lecture Notes in Mathematics, 1439, Springer-Verlag, Berlin, 1990.

\bibitem{BGM} S. B. Bradlow, O. Garc\'{\i}a-Prada and I. Mundet i Riera, {\it Relative 
Hitchin-Kobayashi correspondences for principal pairs}, Quart. J. Math. \textbf{54} (2003), 171--208.

\bibitem{broker&tomDieck} T. Br\"ocker and T. tom Dieck, {\it Representations of 
Compact Lie Groups}, GTM 98, Springer--Verlag (1985).

\bibitem{corlette} K. Corlette, {\it Flat G-bundles with canonical metrics}, J. Diff. 
Geom. {\bf 28} (1988) 361--382.

\bibitem{donaldson} S. K. Donaldson, {\it Twisted harmonic maps and the self-duality equations}, Proc. London Math. Soc. {\bf 55} (1987), 127--131.

\bibitem{FGN} E. Franco, O. Garc\'ia-Prada and P. E. Newstead, {\it Higgs bundles 
over elliptic curves for complex reductive Lie groups}, arXiv:1310.2168 [math.AG].

\bibitem{FGN2} E. Franco, O. Garc\'ia-Prada and P. E. Newstead, {\it Higgs bundles 
over elliptic curves for real groups}, arXiv:1605.04409 [math.AG].

\bibitem{friedman&morgan} R. Friedman and J. Morgan, {\it Holomorphic principal 
bundles over elliptic curves I}, arXiv:math/9811130 [math.AG].

\bibitem{friedman&morgan&witten} R. Friedman, J. Morgan and E. Witten, {\it Principal 
$G$-bundles over elliptic curves}, Math. Res. Lett. {\bf 5} (1998), 97--118.

\bibitem{GP1} O. Garc\'ia-Prada, {\it Involutions of the moduli space of 
$\SL(n,\CC)$-Higgs bundles and real forms}, In Vector Bundles and Low Codimensional 
Subvarieties: State of the Art and Recent Developments" Quaderni di Matematica, 
Editors: G. Casnati, F. Catanese and R. Notari (2007).

\bibitem{GP2} O. Garc\'ia-Prada, {\it Higgs bundles and surface group 
representations}, in Moduli Spaces and Vector Bundles, LMS Lecture Notes Series 359, 
265--310, Cambridge University Press (2009).

\bibitem{GGM} O. Garc\'ia-Prada, P. B. Gothen, and I. Mundet i Riera, {\it The 
Hitchin–Kobayashi correspondence, Higgs pairs and surface group representations}, arXiv:0909.4487 [math.AG].

\bibitem{garcia-prada&ramanan} O. Garc\'{\i}a-Prada and S. Ramanan, {\it Involutions 
and higher order automorphisms of Higgs bundle moduli spaces}, arXiv:1605.05143 [math.AG].

\bibitem{garcia-prada&wilkin} O. Garc\'{\i}a-Prada and G. Wilkin, {\it Action of the mapping class group on character varieties and Higgs bundles}, arXiv:1612.02508 [math.AG].

\bibitem{Ha} A. Harnack, {\it \"Uber die Vieltheiligkeit der ebenen algebraischen 
Curven}, Math. Ann. \textbf{10} (1876), 189--198.

\bibitem{heller&schaposnik} S. Heller and L. P. Schaposnik, {\it Branes through finite group actions}, arXiv:1611.00391 [math.AG].

\bibitem{hitchin-selfuality_equations} N.J. Hitchin, {\it The self-duality equations 
on a Riemann surface}, Proc. London Math. Soc. (3), 55(1) (1987) 59--126.

\bibitem{hitchin_cc} N. J. Hitchin, {\it Higgs bundles and characteristic classes}, 
arXiv:1308.4603.

\bibitem{NKH} N.-K. Ho, {\it The Real locus of an involution map on the moduli space 
of flat connections on a Riemann surface}, Intern. Math. Res. Not. \textbf{61} 
(2004), 3263--3285.

\bibitem{kapustin&witten} A. Kapustin and E. Witten, {\it Electric-magnetic duality 
and the geometric Langlands program}, Commun. Num. Theory Phys. {\bf 1} (2007) 
1--236.

\bibitem{K} F. Klein, {\it On Riemann's theory of algebraic functions and their 
integrals. A supplement to the usual treatises}, Dover Publications Inc. (1963). 

\bibitem{laszlo} Y. Laszlo, {\it About $G$-bundles over elliptic curves}, Ann. Inst. 
Four. {\bf 48} (1998), 413--424.

\bibitem{Ni} N. Nitsure, {\it Moduli space of semistable pairs on a curve}, {\it Proc. Lond. 
Math. Soc.} {\bf 62} (1991), 275--300.

\bibitem{SCH1} F. Schaffhauser, {\it Real points of coarse moduli schemes of vector 
bundles on a real algebraic curve}, J. Symplectic Geom. \textbf{10} (2012), 503--534.

\bibitem{SCH2} F. Schaffhauser, {\it Moduli spaces of vector bundles over a Klein 
surface}, Geom. Dedicata \textbf{151} (2012), 187--206.

\bibitem{simpson0} C. T. Simpson, {\it Constructing variations of Hodge structure 
using Yang-Mills theory and applications to uniformization}, J. Amer. Math. Soc. {\bf
1} (1988), 867--918.

\bibitem{simpson1} C. T. Simpson, {\it Moduli of representations of the fundamental 
group of a smooth projective variety I}, Publ. Math. Inst. Hautes \'Etud. Sci. {\bf
79} (1994), 47--129.

\bibitem{simpson2} C. T. Simpson, {\it Moduli of representations of the fundamental 
group of a smooth projective variety II}, Publ. Math., Inst. Hautes \'Etud. Sci.
{\bf 80} (1995), 5--79.

\end{thebibliography}
\end{document}